\documentclass[12pt]{amsart}

\usepackage{amsmath}
\usepackage{amsfonts}
\usepackage{amssymb}
\usepackage[all]{xy}           
\usepackage{bbding}
\usepackage{txfonts}
\usepackage{amscd}

\usepackage[shortlabels]{enumitem}
\usepackage{ifpdf}
\ifpdf
  \usepackage[colorlinks,final,backref=page,hyperindex]{hyperref}
\else
  \usepackage[colorlinks,final,backref=page,hyperindex]{hyperref}
\fi
\usepackage{tikz}
\usepackage[active]{srcltx}

\makeatletter
\numberwithin{equation}{section}

\newtheorem{thm}{Theorem}[section]
\newtheorem{lem}[thm]{Lemma}

\newtheorem{pro}[thm]{Proposition}
\newtheorem{ex}[thm]{Example}
\newtheorem{rmk}[thm]{Remark}
\newtheorem{defi}[thm]{Definition}

\newcommand{\fl}{\mathbf l}
\newcommand{\fr}{\mathbf r}
\newcommand{\ffl}{\tilde{\mathbf{l}}}
\newcommand{\ffr}{\tilde{\mathbf{r}}}

\newcommand{\g}{\mathfrak{g}}

\newcommand{\gl}{\mathfrak{gl}}

\newcommand{\kl}{\mathfrak{l}}
\newcommand{\kr}{\mathfrak{r}}
\newcommand{\kkl}{\tilde{\mathfrak{l}}}
\newcommand{\kkr}{\tilde{\mathfrak{r}}}

\newcommand{\bz}{\mathbb{Z}}

\newcommand{\ad}{\mathrm{ad}}
\newcommand{\PYBE}{\mathrm{PYBE}}
\newcommand{\CYBE}{\mathrm{CYBE}}
\newcommand{\LYBE}{\mathrm{LYBE}}

\newcommand{\id}{\mathrm{id}}
\newcommand{\Img}{\mathrm{Im}}

\begin{document}

\title[Leibniz bialgebra constructed from Lie bialgebras and perm bialgebras ]
{Leibniz bialgebras constructed by tensor product from Lie bialgebras and perm bialgebras}

\author{Bo Hou}
\address{School of Mathematics and Statistics, Henan University, Kaifeng 475004, China}
\email{bohou1981@163.com}

\author{Ru Li}
\address{School of Mathematics and Statistics, Henan University, Kaifeng 475004,
China}
\email{13037698973@163.com}


\begin{abstract}
The construction problem of Leibniz bialgebras from Lie bialgebras and perm bialgebras
is considered in this paper. We show that there is a Leibniz algebra structure on the
tensor product of a Lie algebra and a perm algebra, and elevate this conclusion to
the level of bialgebra. We prove that the tensor product of a quadratic Lie algebra and a
perm bialgebra has a Leibniz bialgebra structure, and this Leibniz bialgebra structure
is coboundary (resp. quasi-triangular, triangular, factorizable) if the original perm
bialgebra is coboundary (resp. quasi-triangular, triangular, factorizable). Moreover,
we constructed an infinite-dimensional Leibniz bialgebra using the tensor
product of a finite-dimensional Lie bialgebra and a quadratic $\bz$-graded perm algebra.
Quasi-triangular and triangular infinite-dimensional Leibniz bialgebras are considered.
 \end{abstract}

\dedicatory{School of Mathematics and Statistics, Henan University, Kaifeng 475004, China\\
Email: bohou1981@163.com, 13037698973@163.com}

\keywords{Leibniz bialgebra, Lie bialgebra, perm bialgebra,
Yang-Baxter equation, $\mathcal{O}$-operator}
\makeatletter
\@namedef{subjclassname@2020}{\textup{2020} Mathematics Subject Classification}
\makeatother
\subjclass[2020]{
17A32, 
17A60, 
17B38, 
17B62. 
}

\maketitle

\vspace{-6mm}
\tableofcontents 



\vspace{-8mm}

\section{Introduction}\label{sec:intr}

Leibniz algebras are certain generalizations of Lie algebras, which lack the
anti-symmetry property of Lie algebras. Leibniz algebras were first studied by
Bloh \cite{Blo} in 1965 and later introduced by Loday \cite{Lod, LP} with the motivation
in the study of the periodicity in algebraic $K$-theory. In recent years, Leibniz algebras
were studied from different aspects due to applications in both mathematics and physics:
integration of Leibniz algebras were studied in \cite{Cov} and deformation quantization of
Leibniz algebras was studied in \cite{DW}.

A bialgebra structure on a given algebra structure is obtained by a comultiplication
together with some compatibility conditions between the multiplication and the
comultiplication. A famous example of bialgebra structures is Lie bialgebras \cite{Dri},
which can be viewed as a linearization of Poisson-Lie groups.
In recent years, the bialgebra theories of various algebra structures have been
extensively developed, such as left-symmetric bialgebras (also called pre-Lie
bialgebras)\cite{Bai}, Jordan bialgebras \cite{Zhe}, Novikov bialgebras \cite{HBG},
perm bialgebras \cite{Hou,LZB}, Jacobi-Jordan bialgebras \cite{BCHM}, antisymmetric
infinitesimal bialgebras \cite{Bai1}, etc. Recently, quasi-triangular bialgebra
structure, especially factorizable bialgebra structure, have received a lot of attention.
Factorizable Lie bialgebras \cite{LS}, factorizable antisymmetric infinitesimal bialgebras
\cite{SW}, factorizable pre-Lie bialgebras \cite{WBLS} and factorizable Novikov
bialgebras \cite{CH} have been further studied. Leibniz algebras are
certain generalizations of Lie algebras. Further attention has been paid to
Leibniz bialgebras, especially some special Leibniz bialgebras such as
triangular Leibniz bialgebras and factorizable Leibniz bialgebras \cite{TS,BLST}.
In this paper, we further consider the construction problems of triangular Leibniz
bialgebras and factorizable Leibniz bialgebras.

Recently, the relationship between different types of bialgebra structures
has received a lot of attention. Since the operad of (left) Novikov algebras
and the operad of right Novikov algebras are Koszul dual, in \cite{HBG}, Hong, Bai
and Guo have proposed a method for constructing infinite-dimensional Lie bialgebras
using the affinization of Novikov bialgebras. Similarly, since the operads of perm
algebras and pre-Lie algebras are the Koszul dual each other, Lin, Zhou and Bai have
constructed infinite-dimensional Lie bialgebras by using the pre-Lie bialgebras
and perm bialgebras, respectively \cite{LZB}. Some relationships between differential
antisymmetric infinitesimal bialgebras, Novikov bialgebras and Lie bialgebras
were considered in \cite{HBG2,CH}. In \cite{GH}, we have provided some methods for
constructing Lie bialgebras by discussing in detail the relationship between
dendriform $D$-bialgebras, pre-Lie bialgebras, antisymmetric infinitesimal bialgebras
and Lie bialgebras. In \cite{Hou1}, we have provided a method for constructing
infinite-dimensional Poisson bialgebras by the affinization of pre-Lie bialgebras
and Zinbiel bialgebras.
Moreover, the relationship between Gel'fand-Dorfman bialgebras and Lie conformal
bialgebras have been discussed in \cite{HBG1}. In this paper, we mainly consider
the construction problem of Leibniz bialgebras from Lie bialgebras and perm bialgebras.

Note that there is a Leibniz algebra structure on the tensor product of a Lie
algebra and a perm algebra. We first consider the coalgebra versions of the conclusion,
and obtain a Leibniz coalgebra structure on the tensor product of a Lie coalgebra and
a perm coalgebra. Since the quadratic Lie algebra can naturally induce the Lie coalgebra
structure on its dual space, we show that there is a Leibniz bialgebra structure
on the tensor product of a quadratic Lie algebra and a perm bialgebra.
The quasi-triangular Leibniz bialgebras are related to the solution of the Leibniz
Yang-Baxter equation ($\LYBE$). To construct a quasi-triangular Leibniz
bialgebra, we need to start by discussing the relationship between the solutions of
$\LYBE$ and the solutions of classical Yang-Baxter equation ($\CYBE$) and the
solutions of perm Yang-Baxter equation ($\PYBE$). The $\CYBE$ arose from the
study of inverse scattering theory in the 1980s and was recognized as the semi-classical
limit of the quantum Yang-Baxter equation \cite{Yang,Bax}. The $\PYBE$ was introduced
in \cite{Hou,LZB,Lin} to study some special perm bialgebras. In fact, a symmetric
solutions of the $\PYBE$ (resp. $\LYBE$) induces a triangular perm bialgebra
(resp. triangular Leibniz bialgebra). Here, from solutions of the $\PYBE$ in a perm
algebra, we first obtain constructions of symmetric solutions of the $\LYBE$ in the induced
Leibniz algebra which is obtained from the tensor product of the perm algebra and a
quadratic Lie algebra. And this construction maintains the symmetry of the solution
and the invariance of the symmetric parts the solution. Therefore, we obtain the
following theorem.

\smallskip\noindent
{\bf Theorem I} (Theorems \ref{thm:permbia-Lbi} and \ref{thm:indu-sLbia})
{\it Let $(B, \diamond, \nu)$ be a perm bialgebra and $(\g, [-,-], \omega)$ be a quadratic
Lie algebra and $(\g\otimes B, \ast)$ be the induced Leibniz algebra by
$(\g, [-,-])$ and $(B, \diamond)$, i.e., $(g_{1}, b_{1})\ast(g_{2}, b_{2})
=[g_{1}, g_{2}]\otimes(b_{1}\diamond b_{2})$ for any $g_{1}, g_{2}\in\g$ and
$b_{1}, b_{2}\in B$. Define a linear map $\vartheta: \g\otimes B\rightarrow
(\g\otimes B)\otimes(\g\otimes B)$ by
$$
\vartheta(g\otimes b)=\delta_{\omega}(g)\bullet\nu(b))
:=\sum_{(g)}\sum_{(b)}(g_{(1)}\otimes b_{(1)})\otimes(g_{(2)}\otimes b_{(2)}),
$$
for any $g\in\g$ and $b\in B$, where $\delta_{\omega}(g)=\sum_{(g)}g_{(1)}\otimes g_{(2)}$
and $\nu(b)=\sum_{(b)}b_{(1)}\otimes b_{(2)}$ in the Sweedler notation.
Then $(\g\otimes B, \ast, \vartheta)$ is a Leibniz bialgebra. Moreover,
we get that $(\g\otimes B, \ast, \vartheta)$ is a coboundary (resp.
quasi-triangular, triangular, factorizable) completed Leibniz bialgebra if
$(B, \diamond, \nu)$ is coboundary (resp. quasi-triangular,
triangular, factorizable).}

\smallskip
In \cite{Kup}, Kupershmidt found that the $\CYBE$ in tensor form on Lie algebras
can be converted into an $\mathcal{O}$-operator associated to the coadjoint representation.
This conclusion has been proven to be valid for various algebra structures.
Nowadays, the $\mathcal{O}$-operator is regarded as an operator form of a solution
of the classical Yang-Baxter equation. On the basis of proving that a symmetric solution
of the $\PYBE$ induces a solution of the $\LYBE$ in the induced Leibniz algebra,
we obtain a method for constructing $\mathcal{O}$-operators of the induced Leibniz algebra
from $\mathcal{O}$-operators of the original perm algebra. More precisely,
we have the following commutative diagram:
$$
\xymatrix@C=3cm@R=0.5cm{
\txt{$r$ \\ {\tiny a symmetric solution} \\ {\tiny of the $\PYBE$ in $(B, \diamond)$}}
\ar[d]_-{{\rm Pro.}~\ref{pro:PYBE-LYBE}}\ar[r]^-{{\rm Pro.}~\ref{pro:o-perm}} &
\txt{$r^{\sharp}$\\ {\tiny an $\mathcal{O}$-operator of $(B, \diamond)$} \\
{\tiny associated to $(B^{\ast}, \ffl_{B}^{\ast}, \ffl_{B}^{\ast}-\ffr_{B}^{\ast})$}}
\ar[d]^-{\mbox{$-\otimes\kappa^{\sharp}$}} \\
\txt{$\widetilde{r}$ \\ {\tiny a symmetric solution} \\ {\tiny of the $\LYBE$ in
$(\g\otimes B, \ast)$}} \ar[r]^-{{\rm Pro.}~\ref{pro:o-leib}}
& \txt{$\widetilde{r}^{\sharp}=r^{\sharp}\otimes\kappa^{\sharp}$ \\
{\tiny an $\mathcal{O}$-operator of $(\g\otimes B, \ast)$ } \\
{\tiny associated to $((\g\otimes B)^{\ast}, \fl_{\g\otimes B}^{\ast},
-\fl_{\g\otimes B}^{\ast}-\fr_{\g\otimes B}^{\ast})$}}}
$$

Note that the roles of perm algebras and Lie algebras are symmetric, there is also a
Leibniz bialgebra structure on the tensor product of a Lie bialgebra and a quadratic
perm algebra. We apply this method to the construction of infinite-dimensional Leibniz
bialgebras. We introduce a completed tensor product such that there is a $\bz$-graded
Leibniz coalgebra structure on the tensor product of a Lie coalgerba and a
$\bz$-graded perm coalgebra. By elevating this conclusion to the level of bialgebras,
the following conclusion can be obtained.

\smallskip\noindent
{\bf Theorem II} (Theorems \ref{thm:Lie+perm=L} and \ref{thm:indu-triLib})
{\it Let $(\g, [-,-], \delta)$ be a finite-dimensional Lie bialgebra,
$(B=\oplus_{i\in\bz}B_{i}, \diamond, \varpi)$ be a quadratic $\bz$-graded perm algebra
and $(\g\otimes B, \ast)$ be the induced $\bz$-graded Leibniz algebra from $(\g, [-,-])$
by $(B, \diamond)$. Define a linear map $\vartheta: \g\otimes B\rightarrow(\g\otimes B)
\otimes(\g\otimes B)$ by
$$
\vartheta(g\otimes b)=\delta(g)\bullet\nu_{\varpi}(b)
:=\sum_{(g)}\sum_{i,j,\alpha}(g_{(1)}\otimes b_{1,i,\alpha})
\otimes(g_{(2)}\otimes b_{2,j,\alpha}),
$$
for any $g\in\g$ and $b\in B$, where $\delta(g)=\sum_{(g)}g_{(1)}\otimes g_{(2)}$
in the Sweedler notation and $\nu_{\varpi}(b)=\sum_{i,j,\alpha}
b_{1,i,\alpha}\otimes b_{2,j,\alpha}$. Then $(\g\otimes B, \ast, \vartheta)$ is a
completed Leibniz bialgebra.

In particular, we get that $(\g\otimes B, \ast, \vartheta)$ is a coboundary (resp.
quasi-triangular, triangular) completed Leibniz bialgebra if
$(\g, [-,-], \delta)$ is coboundary (resp. quasi-triangular, triangular).}

\smallskip
The paper is organized as follows.
In Section \ref{sec:Leib-alg}, we recall the notions of Leibniz algebras, representations
of Leibniz algebras and Leibniz bialgebras. In particular, we show that there is a
Leibniz algebra structure on the tensor product of a Lie algebra and a perm algebra.
In Section \ref{sec:qtbia-perm}, we show in Theorem \ref{thm:permbia-Lbi} that there is
a natural Leibniz bialgebra structure on the tensor product of a perm bialgebra and a
quadratic Lie algebra. Constructions of (symmetric) solutions of the $\LYBE$ in the
induced Leibniz bialgebra from the (symmetric) solutions of the $\PYBE$ in a
perm algebra are obtained in Proposition \ref{pro:PYBE-LYBE}. Therefore, we get
the induced Leibniz algebra is coboundary (resp. quasi-triangular, triangular, factorizable)
if the original perm bialgebra is coboundary (resp. quasi-triangular, triangular,
factorizable). In Section \ref{sec:infi-bia}, we show that there is
an infinite-dimensional Leibniz bialgebra structure on the tensor product
of a quadratic $\bz$-graded perm algebra and a (finite-dimensional) Lie bialgebra
(see Theorem \ref{thm:Lie+perm=L}). By introducing the completed solution of the
$\LYBE$, we discuss the relationship between the completed solution of the $\LYBE$
in the induced infinite-dimensional Leibniz algebra and the solution of the $\CYBE$
in the original Lie algebra. We give in Theorem \ref{thm:indu-triLib} that the induced
infinite-dimensional Leibniz bialgebra is coboundary (resp. quasi-triangular, triangular)
if the original (finite-dimensional) Lie bialgebra is coboundary (resp.
quasi-triangular, triangular).

Throughout this paper, we fix $\Bbbk$ as a field of characteristic zero.
All the vector spaces, algebras are over $\Bbbk$ and are finite-dimensional
unless otherwise specified, and all tensor products are also over $\Bbbk$.
We denote the identity map by $\id$. For any finite-dimensional $\Bbbk$-vector
space $V$, we denote $V^{\ast}$ the dual space.

\bigskip

\section{Leibniz algebras and Leibniz bialgebras} \label{sec:Leib-alg}
In this section, we recall the notions of Leibniz algebras, representations of Leibniz
algebras and Leibniz bialgebras.

\begin{defi}\label{def:Le-alg}
A (left) {\bf Leibniz algebra} $(A, \ast)$ is a vector space $A$ together with a bilinear map
$\ast: A\otimes A\rightarrow A$ satisfying the following Leibniz identity:
$$
a_{1}\ast(a_{2}\ast a_{3})=(a_{1}\ast a_{2})\ast a_{3}+a_{2}\ast(a_{1}\ast a_{3}),
$$
for any $a_{1}, a_{2}, a_{3}\in A$.
\end{defi}

Let $(A, \ast)$ and $(A', \ast')$ be two Leibniz algebras. A linear
map $f: A\rightarrow A'$ is called a {\bf homomorphism of Leibniz algebras}
if $f(a_{1}\ast a_{2})=f(a_{1})\ast'f(a_{2})$ for any $a_{1}, a_{2}\in A$.
A Leibniz algebra $(A, \ast)$ is called a {\bf Lie algebra} if the product $\ast$ is
anticommutative, i.e., $a_{1}\ast a_{2}=-a_{2}\ast a_{1}$ for any $a_{1}, a_{2}\in A$.
In this case, we usually denote the product $\ast$ by the Lie Bracket $[-,-]$.
Recall that a {\bf perm algebra} is a pair $(B, \diamond)$, where $B$ is a vector space
and $\diamond: B\otimes B\rightarrow B$ is a bilinear operator such that for any
$b_{1}, b_{2}, b_{3}\in B$,
$$
b_{1}\diamond(b_{2}\diamond b_{3})=(b_{1}\diamond b_{2})\diamond b_{3}
=(b_{2}\diamond b_{1})\diamond b_{3}.
$$
In a perm algebra $(B, \diamond)$, we also have $b_{1}\diamond(b_{2}\diamond b_{3})
=b_{2}\diamond(b_{1}\diamond b_{3})$. We can get a Leibniz algebra by the tensor
product of a Lie algebra and a perm algebra.

\begin{pro}\label{pro:Lie+perm}
Let $(\g, [-,-])$ be a Lie algebra and $(B, \diamond)$ be a perm algebra.
Define a bilinear map $\ast: (\g\otimes B)\otimes(\g\otimes B)\rightarrow \g\otimes B$ by
$$
(g_{1}\otimes b_{1})\ast(g_{2}\otimes b_{2}):=[g_{1}, g_{2}]\otimes(b_{1}\diamond b_{2}),
$$
for any $g_{1}, g_{2}\in\g$ and $b_{1}, b_{2}\in B$.
Then $(\g\otimes B, \ast)$ is a Leibniz algebra.
\end{pro}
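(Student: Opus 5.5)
By bilinearity of $\ast$ and trilinearity of the Leibniz identity, it suffices to check the identity on pure tensors $a_i=g_i\otimes b_i$, $i=1,2,3$. Expanding each term with the definition of $\ast$ gives
\begin{align*}
a_{1}\ast(a_{2}\ast a_{3})&=[g_{1},[g_{2},g_{3}]]\otimes b_{1}\diamond(b_{2}\diamond b_{3}),\\
(a_{1}\ast a_{2})\ast a_{3}&=[[g_{1},g_{2}],g_{3}]\otimes (b_{1}\diamond b_{2})\diamond b_{3},\\
a_{2}\ast(a_{1}\ast a_{3})&=[g_{2},[g_{1},g_{3}]]\otimes b_{2}\diamond(b_{1}\diamond b_{3}).
\end{align*}

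The key observation is that in a perm algebra all three $B$-factors coincide. By the defining identities, $b_{1}\diamond(b_{2}\diamond b_{3})=(b_{1}\diamond b_{2})\diamond b_{3}$. By the consequence noted just before the proposition, $b_{1}\diamond(b_{2}\diamond b_{3})=b_{2}\diamond(b_{1}\diamond b_{3})$. (This consequence follows from $b_2\diamond(b_1\diamond b_3)=(b_2\diamond b_1)\diamond b_3=(b_1\diamond b_2)\diamond b_3$.) Writing $c:=b_{1}\diamond(b_{2}\diamond b_{3})$, the Leibniz identity therefore reduces to
$$
\bigl([g_{1},[g_{2},g_{3}]]-[[g_{1},g_{2}],g_{3}]-[g_{2},[g_{1},g_{3}]]\bigr)\otimes c=0.
$$

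The Lie bracket factor vanishes by the Jacobi identity in derivation form. By antisymmetry, $[[g_1,g_2],g_3]+[g_2,[g_1,g_3]]=[g_1,[g_2,g_3]]$ is exactly the Jacobi identity. Hence the identity holds and $(\g\otimes B,\ast)$ is a Leibniz algebra.

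There is no real obstacle here. The only point needing care is to confirm that all three perm products agree, so that the $B$-factor can be pulled out. This is precisely the left-commutativity and associativity that perm algebras provide.
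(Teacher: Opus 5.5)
Your proof is correct and follows essentially the same route as the paper. You expand the Leibniz identity on pure tensors, use the perm identities to see that $(b_{1}\diamond b_{2})\diamond b_{3}$, $b_{2}\diamond(b_{1}\diamond b_{3})$ and $b_{1}\diamond(b_{2}\diamond b_{3})$ coincide, and conclude by the Jacobi identity; your explicit derivation of left-commutativity and your reduction to pure tensors by trilinearity fill in steps the paper leaves implicit.
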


\begin{proof}
For any $g_{1}, g_{2}, g_{3}\in\g$ and $b_{1}, b_{2}, b_{3}\in B$, we have
\begin{align*}
&\;((g_{1}\otimes b_{1})\ast(g_{2}\otimes b_{2}))\ast(g_{3}\otimes b_{3})
+(g_{2}\otimes b_{2})\ast((g_{1}\otimes b_{1})\ast(g_{3}\otimes b_{3}))\\
=&\;[[g_{1}, g_{2}], g_{3}]\otimes(b_{1}\diamond b_{2})\diamond b_{3}
+[g_{2}, [g_{1}, g_{3}]]\otimes b_{2}\diamond(b_{1}\diamond b_{3})\\
=&\;[g_{1}, [g_{2}, g_{3}]]\otimes b_{1}\diamond(b_{2}\diamond b_{3})\\
=&\;(g_{1}\otimes b_{1})\ast((g_{2}\otimes b_{2})\ast(g_{3}\otimes b_{3})).
\end{align*}
Thus, $(\g\otimes B, \ast)$ is a Leibniz algebra.
\end{proof}

We now consider the representations of a Leibniz algebra.

\begin{defi}\label{def:Le-rep}
Let $(A, \ast)$ be a Leibniz algebra, $V$ be a vector space
and $\kl, \kr: A\rightarrow\gl(V)$ be two linear maps. Then $(V, \kl, \kr)$ is
called a {\bf representation of $(A, \ast)$} if for any $a_{1}, a_{2}\in A$,
\begin{align*}
&\qquad\; \kl(a_{1}\ast a_{2})=\kl(a_{1})\kl(a_{2})-\kl(a_{2})\kl(a_{1}),\\
&\kr(a_{1})\kr(a_{2})=\kr(a_{2}\ast a_{1})-\kl(a_{2})\kr(a_{1})=-\kr(a_{1})\kl(a_{2}).
\end{align*}
\end{defi}

For the representations of Leibniz algebras, we have the following equivalent
characterization.

\begin{pro}\label{pro:rep-lei}
Let $(A, \ast)$ be a Leibniz algebra, $V$ be a vector space and $\kl, \kr:
A\rightarrow\gl(V)$ be two linear maps. Define a bilinear map $\tilde{\ast}: (A\oplus V)
\otimes(A\oplus V)\rightarrow A\oplus V$ by
$$
(a_{1}, v_{1})\tilde{\ast}(a_{2}, v_{2}):=\big(a_{1}\ast a_{2},\ \
\kl(a_{1})(v_{2})+\kr(a_{2})(v_{1})\big),
$$
for any $a_{1}, a_{2}\in A$ and $v_{1}, v_{2}\in V$. Then $(V, \kl, \kr)$
is a representation of $(A, \ast)$ if and only if $(A\oplus V, \tilde{\ast})$ is a
Leibniz algebra. We denote this Leibniz algebra by $A\ltimes V$, and call it the
{\bf semidirect product} of $(A, \ast)$ by the representation $(V, \kl, \kr)$.
\end{pro}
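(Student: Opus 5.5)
The plan is to expand the Leibniz identity for $\tilde{\ast}$ on three arbitrary elements $(a_{1}, v_{1}), (a_{2}, v_{2}), (a_{3}, v_{3})\in A\oplus V$ and compare components. The $A$-component of
$$
(a_{1}, v_{1})\tilde{\ast}((a_{2}, v_{2})\tilde{\ast}(a_{3}, v_{3}))-((a_{1}, v_{1})\tilde{\ast}(a_{2}, v_{2}))\tilde{\ast}(a_{3}, v_{3})-(a_{2}, v_{2})\tilde{\ast}((a_{1}, v_{1})\tilde{\ast}(a_{3}, v_{3}))
$$
is exactly the Leibniz identity of $(A, \ast)$, which holds by hypothesis. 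So everything reduces to the $V$-component, which is linear in $(v_{1}, v_{2}, v_{3})$. It therefore vanishes for all choices if and only if, for each $k$, its restriction to $v_{k}$ (with the other two vectors set to zero) vanishes.

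Next I compute the three pieces using $\tilde{\ast}$.
\begin{itemize}
\item The coefficient of $v_{3}$ is $\kl(a_{1})\kl(a_{2})v_{3}-\kl(a_{1}\ast a_{2})v_{3}-\kl(a_{2})\kl(a_{1})v_{3}$. Its vanishing is the first axiom, $\kl(a_{1}\ast a_{2})=[\kl(a_{1}),\kl(a_{2})]$.
\item The coefficient of $v_{2}$ is $\kl(a_{1})\kr(a_{3})v_{2}-\kr(a_{3})\kl(a_{1})v_{2}-\kr(a_{1}\ast a_{3})v_{2}$.
\item The coefficient of $v_{1}$ is $\kr(a_{2}\ast a_{3})v_{1}-\kr(a_{3})\kr(a_{2})v_{1}-\kl(a_{2})\kr(a_{3})v_{1}$.
\end{itemize}

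After renaming variables, the vanishing of the $v_{1}$-coefficient reads $\kr(a_{1})\kr(a_{2})=\kr(a_{2}\ast a_{1})-\kl(a_{2})\kr(a_{1})$. This is the first equality in the second line of Definition \ref{def:Le-rep}.

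The $v_{2}$-coefficient, renamed, reads $\kr(a_{2}\ast a_{1})=\kl(a_{2})\kr(a_{1})-\kr(a_{1})\kl(a_{2})$. This is not literally one of the displayed axioms, and handling it is the main step.

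\emph{Forward direction.} If $(V,\kl,\kr)$ is a representation, combine $\kr(a_{1})\kr(a_{2})=\kr(a_{2}\ast a_{1})-\kl(a_{2})\kr(a_{1})$ with $\kr(a_{1})\kr(a_{2})=-\kr(a_{1})\kl(a_{2})$. This gives exactly $\kr(a_{2}\ast a_{1})=\kl(a_{2})\kr(a_{1})-\kr(a_{1})\kl(a_{2})$, so the $v_{2}$-condition holds. The $v_{1}$- and $v_{3}$-conditions are axioms, so $A\ltimes V$ is Leibniz.

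\emph{Converse.} If $A\ltimes V$ is Leibniz, all three coefficient identities hold. The $v_{3}$ and $v_{1}$ identities give the first axiom and the first equality of the second line. Subtracting the $v_{2}$ identity from the $v_{1}$ identity yields $\kr(a_{1})\kr(a_{2})=-\kr(a_{1})\kl(a_{2})$, which is the remaining equality.

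Throughout I must keep careful track of the index renaming between the expansion and the form stated in Definition \ref{def:Le-rep}. That bookkeeping is the only real obstacle; the rest is routine expansion.
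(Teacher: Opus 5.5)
Your proposal is correct and is the same direct componentwise verification that the paper dismisses as ``a straightforward check.'' Your three coefficient identities and the index renamings are accurate. The way you handle the $v_{2}$-condition is also right: in the forward direction you derive it from the two right-action axioms, and in the converse you recover the missing axiom from it.
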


\begin{proof}
It is a straightforward check.
\end{proof}

Let $(V, \kl, \kr)$ and $(V', \kl', \kr')$ be two representations
of a Leibniz algebra $(A, \ast)$. A linear map $f: V\rightarrow V'$ is called
a {\bf representation homomorphism} if $f(\kl(a)(v))=\kl'(a)(f(v))$ and
$f(\kr(a)(v))=\kr'(a)(f(v))$ for any $a\in A$, $v\in V$. A representation homomorphism
$f$ is said to be an {\bf isomorphism} if $f$ is a bijection. Define the left
multiplication map $\fl_{A}: A\rightarrow\gl(A)$ and the right multiplication map
$\fr_{A}: A\rightarrow\gl(A)$ by $\fl_{A}(a_{1})(a_{2})=a_{1}\ast a_{2}$,
and $\fr_{A}(a_{1})(a_{2})=a_{2}\ast a_{1}$ respectively for all $a_{1}, a_{2}\in A$.
Then $(A, \fl_{A}, \fr_{A})$ is a representation of $(A, \ast)$,
which is called the {\bf regular representation} of $(A, \ast)$.

Let $V$, $W$ be two finite-dimensional $\Bbbk$-vector spaces. We denote
$\langle-,-\rangle$ the natural pairing between the spaces $V$ and $V^{\ast}$,
i.e., $\langle\xi, v\rangle:=\xi(v)\in\Bbbk$ for any $v\in V$ and $\xi\in V^{\ast}$.
For a linear map $f: V\rightarrow W$, we define the map $f^{\ast}: W^{\ast}
\rightarrow V^{\ast}$ by $\langle f^{\ast}(\xi), v\rangle
=\langle\xi, f(v)\rangle$ for any $v\in V$ and $\xi\in W^{\ast}$.
Let $(A, \ast)$ be a finite-dimensional Leibniz algebra and $(V, \kl, \kr)$ be
a representation of it. We define linear maps $\kl^{\ast}, \kr^{\ast}:
A\rightarrow\gl(V^{\ast})$ by
$$
\langle\kl^{\ast}(a_{1})(\xi),\; a_{2}\rangle=-\langle\xi,\; \kl(a_{1})(a_{2})\rangle,\qquad
\langle\kr^{\ast}(a_{1})(\xi),\; a_{2}\rangle=-\langle\xi,\; \kr(a_{1})(a_{2})\rangle,
$$
for any $a_{1}, a_{2}\in A$ and $\xi\in V^{\ast}$.
For a Leibniz algebra $(A, \ast)$, one can check that $(A^{\ast},
\fl_{A}^{\ast}, -\fl_{A}^{\ast}-\fr_{A}^{\ast})$ is a representation of $(A, \ast)$,
which is called the {\bf coregular representation} of $(A, \ast)$.

Let $\varpi(-, -)$ be a bilinear form on a Leibniz algebra $(A, \ast)$. Recall that
\begin{itemize}
\item[-] $\varpi(-, -)$ is called {\bf nondegenerate} if
     $\varpi(a_{1}, a_{2})=0$ for any $a_{2}\in A$, then $a_{1}=0$;
\item[-] $\varpi(-, -)$ is called {\bf skew-symmetric} if $\varpi(a_{1}, a_{2})
     =-\varpi(a_{2}, a_{1})$, for any $a_{1}, a_{2}\in A$;
\item[-] $\varpi(-, -)$ is called {\bf invariant} if $\varpi(a_{1}\ast a_{2},\;
     a_{3})=\varpi(a_{1},\; a_{2}\ast a_{3}+a_{3}\ast a_{2})$ for any $a_{1}, a_{2},
     a_{3}\in A$.
\end{itemize}
A Leibniz algebra $(A, \ast)$ with a nondegenerate skew-symmetric invariant
bilinear form $\varpi(-, -)$ is called a {\bf quadratic Leibniz algebra} and
denoted by $(A, \ast, \varpi)$. It is easy to see that
$\varpi(a_{1}\ast a_{2},\; a_{3})=-\varpi(a_{2},\; a_{1}\ast a_{3})$ for any
$a_{1}, a_{2}, a_{3}\in A$ if $(A, \ast, \varpi)$ is quadratic.
Moreover, one can check that $(A, \fl_{A}, \fr_{A})$ and $(A^{\ast}, \fl_{A}^{\ast},
-\fl_{A}^{\ast}-\fr_{A}^{\ast})$ are isomorphic as representations of
the Leibniz algebra $(A, \ast)$ if there exists a
nondegenerate skew-symmetric invariant bilinear form on $(A, \ast)$.

Next, we consider the bialgebra theory of Leibniz algebras, for the details see
\cite{TS,BLST}. Recall that a {\bf Leibniz coalgebra} $(A, \vartheta)$ is a vector space
$A$ with a linear map $\vartheta: A\rightarrow A\otimes A$ such that
$(\id\otimes\vartheta)\vartheta-(\tau\otimes\id)(\id\otimes\vartheta)\vartheta
=(\vartheta\otimes\id)\vartheta$, where $\tau: A\otimes A\rightarrow A\otimes A$ is
the twist map defined by $\tau(a_{1}\otimes a_{2}):=a_{2}\otimes a_{1}$ for all
$a_{1}, a_{2}\in A$.

\begin{defi}\label{def:Le-bialg}
A {\bf Leibniz bialgebra} is a triple $(A, \ast, \vartheta)$,
where $(A, \ast)$ is a Leibniz algebra, $(A, \vartheta)$ is a Leibniz coalgebra
and the following equations hold:
\begin{align*}
&\qquad\qquad\quad \tau((\fr_{A}(a_{2})\otimes\id)(\vartheta(a_{1})))
=(\fr_{A}(a_{1})\otimes\id)(\vartheta(a_{2})),\\
&\vartheta(a_{1}\ast a_{2})=(\id\otimes\fr_{A}(a_{2})-(\fl_{A}+\fr_{A})(a_{2})\otimes\id)
((\id\otimes\id+\tau)(\vartheta(a_{1})))\\[-1mm]
&\qquad\qquad\qquad\qquad\qquad\qquad\qquad +(\id\otimes\fl_{A}(a_{1})
+\fl_{A}(a_{1})\otimes\id)(\vartheta(a_{2})),
\end{align*}
for any $a_{1}, a_{2}\in A$.
\end{defi}

Let $(A, \ast)$ be a Leibniz algebra. We define a linear map
$F: A\rightarrow\gl(A\otimes A)$ by
$$
F(a)=(\fl_{A}+\fr_{A})(a)\otimes\id-\id\otimes\fr_{A}(a).
$$
An element $r\in A\otimes A$ is called {\bf symmetric} if $r=\tau(r)$;
called {\bf Leib-invariant} if $F(a)(r)=0$ for all $a\in A$.
If there exists an element $r\in A\otimes A$ such that $(A, \ast, \vartheta_{r})$ is a
Leibniz bialgebra, where $\vartheta_{r}: A\rightarrow A\otimes A$ is given by
\begin{align}
\vartheta_{r}(a)=F(a)(r),    \label{cobLeb}
\end{align}
for any $a\in A$, then $(A, \ast, \vartheta_{r})$ is called a {\bf coboundary
Leibniz bialgebra} associated with $r$. Let $(A, \ast)$ be a Leibniz algebra and
$r=\sum_{i}x_{i}\otimes y_{i}\in A\otimes A$. The equation
$$
\mathbf{L}_{r}=r_{12}\ast r_{13}-r_{12}\ast r_{23}-r_{23}\ast r_{12}+r_{23}\ast r_{13}=0
$$
is called the {\bf Leibniz Yang-Baxter equation} (or $\LYBE$) in the Leibniz
algebra $(A, \ast)$, where $r_{12}\ast r_{13}=\sum_{i,j}x_{i}x_{j}\otimes y_{i}
\otimes y_{j}$, $r_{12}\ast r_{23}=\sum_{i,j}x_{i}\otimes y_{i}x_{j}\otimes y_{j}$,
$r_{23}\ast r_{12}=\sum_{i,j}x_{j}\otimes x_{i}y_{j}\otimes y_{i}$ and
$r_{23}\ast r_{13}=\sum_{i,j}x_{j}\otimes x_{i}\otimes y_{i}y_{j}$.

\begin{pro}[\cite{BLST}]\label{pro:sLib-bia}
Let $(A, \ast)$ be a Leibniz algebra, $r\in A\otimes A$ and $\vartheta_{r}:
A\rightarrow A\otimes A$ be the linear map defined by Eq. \eqref{cobLeb}.
\begin{enumerate}
\item[$(i)$] If $r$ is a solution of the $\LYBE$ in $(A, \ast)$ and $r-\tau(r)$ is
     Leib-invariant, then $(A, \ast, \vartheta_{r})$ is a Leibniz bialgebra,
     which is called a {\bf quasi-triangular Leibniz bialgebra} associated with $r$.
\item[$(ii)$] If $r$ is a symmetric solution of the $\LYBE$ in $(A, \ast)$, then $(A, \ast,
     \vartheta_{r})$ is a Leibniz bialgebra, which is called a {\bf triangular Leibniz
     bialgebra} associated with $r$.
\end{enumerate}
\end{pro}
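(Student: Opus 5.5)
The plan is to verify the three defining conditions of a Leibniz bialgebra for $\vartheta_{r}(a)=F(a)(r)$ from Definition \ref{def:Le-bialg}: the coassociativity-type identity for $(A,\vartheta_{r})$, the compatibility
$$\tau((\fr_{A}(a_{2})\otimes\id)(\vartheta_{r}(a_{1})))=(\fr_{A}(a_{1})\otimes\id)(\vartheta_{r}(a_{2})),$$
and the cocycle condition for $\vartheta_{r}(a_{1}\ast a_{2})$. Part $(ii)$ follows from $(i)$, since a symmetric $r$ has $r-\tau(r)=0$, which is trivially Leib-invariant. So all the work is in $(i)$, and I will write $r=\sum_{i}x_{i}\otimes y_{i}$ and split $r=s+\beta$ with $s$ symmetric and $\beta=\tfrac12(r-\tau(r))$ skew and Leib-invariant.

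For the cocycle condition, I expect that it holds for \emph{every} $r$, which is the analogue of the Lie case where a coboundary is always a $1$-cocycle. To check this, I expand $F(a_{1}\ast a_{2})(r)$ using the representation identities of the regular representation, namely $\fl_{A}(a_{1}\ast a_{2})=[\fl_{A}(a_{1}),\fl_{A}(a_{2})]$ and the two $\fr_{A}$ identities of Definition \ref{def:Le-rep}. I then compare the result termwise with the right-hand side after substituting $\vartheta_{r}(a_{1})=F(a_{1})(r)$ and $\vartheta_{r}(a_{2})=F(a_{2})(r)$. Any residual terms should involve only $r-\tau(r)$, through the factor $(\id\otimes\id+\tau)$ applied to $F(a_{1})(r)$, and these vanish by Leib-invariance. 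The compatibility condition I handle in the same way: both sides are expressions in $r$ that are linear in each $a_{i}$. Their difference should reduce, via $\fr_{A}(a)\fr_{A}(b)=-\fr_{A}(a)\fl_{A}(b)$ and the symmetry of $s$, to terms of the form $F(a)(\beta)$, which vanish.

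The main obstacle is the Leibniz coalgebra identity
$$(\id\otimes\vartheta_{r})\vartheta_{r}-(\tau\otimes\id)(\id\otimes\vartheta_{r})\vartheta_{r}-(\vartheta_{r}\otimes\id)\vartheta_{r}=0.$$
Following the Drinfeld-type computation used for Lie bialgebras, I would first show that this left-hand side, evaluated at $a$, equals a combination of operators of the form $\fl_{A}(a)$ or $\fr_{A}(a)$ acting on one tensor leg of $\mathbf{L}_{r}$ or of its permutations $\sigma(\mathbf{L}_{r})$. The remaining terms should be cubic expressions mixing $\mathbf{L}_{r}$-like terms with $F(\cdot)(\beta)$. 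Concretely, I expand each of the three terms as sums over $i,j$ of products such as $(a\ast x_{i})\ast x_{j}\otimes y_{j}\otimes y_{i}$. I then use the Leibniz identity to rewrite every product of the form $a\ast(x\ast y)$ or $(a\ast x)\ast y$ as products of $x_{i}x_{j}$, $x_{i}y_{j}$, $y_{i}y_{j}$ with $a$ applied outside, and group the terms into the four summands of $\mathbf{L}_{r}$. I expect the bookkeeping to require replacing $r$ by $\tau(r)+2\beta$ in some slots. The terms containing $\beta$ then assemble into $(F(x_{i})(\beta))\otimes y_{i}$-type expressions, which vanish.

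If the direct grouping proves too messy, my fallback is to cite the corresponding computation in \cite{BLST}, since this proposition is quoted from there. I would then verify only the key identity expressing the coassociator as $(\fl_{A}(a)\otimes\id\otimes\id+\cdots)\mathbf{L}_{r}$ modulo invariance terms.
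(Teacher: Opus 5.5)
The paper itself gives no argument for this proposition: it is quoted from \cite{BLST}. Your fallback is therefore literally the paper's proof, and your direct verification is an independent route. That route is viable in outline, but two points need fixing.

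First, your expectation that the cocycle condition holds for every $r$ is false, because the analogy with coboundary Lie bialgebras breaks down. Put $\beta'=r-\tau(r)$ and $\phi(a)=(\fl_{A}+\fr_{A})(a)$. Use the identity $(a_{1}\ast a_{2}+a_{2}\ast a_{1})\ast a_{3}=0$, i.e.\ $\fr_{A}(c)\phi(a)=0$. A direct expansion gives
\begin{align*}
\tau((\fr_{A}(a_{2})\otimes\id)(\vartheta_{r}(a_{1})))-(\fr_{A}(a_{1})\otimes\id)(\vartheta_{r}(a_{2}))
&=(\fr_{A}(a_{1})\otimes\fr_{A}(a_{2}))\beta'.
\end{align*}
The right-hand side minus the left-hand side of the identity for $\vartheta_{r}(a_{1}\ast a_{2})$ equals
\begin{align*}
(\fr_{A}(a_{1})\otimes\fr_{A}(a_{2}))\beta'+(\phi(a_{2})\otimes\phi(a_{1}))\beta'
-(\phi(a_{2})\fr_{A}(a_{1})\otimes\id)\beta'.
\end{align*}
Take the $\mathfrak{sl}_{2}$ of Example \ref{ex:colib} viewed as a Leibniz algebra (so $\phi=0$), with $r=x\otimes y$ and $a_{1}=a_{2}=h$. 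Then both defects equal $-4(x\otimes y-y\otimes x)\neq0$.

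Under your hypothesis both defects do vanish. The first term is $(\fr_{A}(a_{1})\otimes\fr_{A}(a_{2}))\beta'=(\fr_{A}(a_{1})\phi(a_{2})\otimes\id)\beta'=0$ by invariance, as you anticipated. The remaining two terms cancel only because $\beta'$ is also skew-symmetric: applying $\tau$ to $F(a)(\beta')=0$ gives $(\id\otimes\phi(a))\beta'=(\fr_{A}(a)\otimes\id)\beta'$. There is also a shortcut: invariance gives $\vartheta_{r}=\vartheta_{s}$ with $s=\frac{1}{2}(r+\tau(r))$ symmetric, and both defects see only the skew part.

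Second, and this is the genuine gap, the Leibniz coalgebra identity is described only in shape, and it is the only place where the $\LYBE$ enters. You never write down the formula expressing the coassociator through actions of $a$ on permuted copies of $\mathbf{L}_{r}$ modulo terms killed by $F(\cdot)(\beta')=0$. That formula is where all the work in \cite{BLST} lies.

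For $(ii)$ you can close this without cubic bookkeeping. For symmetric $r$ one checks $\langle\xi\otimes\eta,\vartheta_{r}(a)\rangle=\langle\fl_{A}^{\ast}(r^{\sharp}(\xi))(\eta)-(\fl_{A}^{\ast}+\fr_{A}^{\ast})(r^{\sharp}(\eta))(\xi),\ a\rangle$. So $\vartheta_{r}^{\ast}$ is the product induced on $A^{\ast}$ by $r^{\sharp}$ and the coregular representation. By Proposition \ref{pro:o-leib}, $r^{\sharp}$ is an $\mathcal{O}$-operator. Hence $\xi\mapsto(r^{\sharp}(\xi),\xi)$ embeds $(A^{\ast},\vartheta_{r}^{\ast})$ as a subalgebra of $A\ltimes A^{\ast}$ (Proposition \ref{pro:rep-lei}). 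So $\vartheta_{r}^{\ast}$ is a left Leibniz product, which is exactly the Leibniz coalgebra identity for $\vartheta_{r}$. Combined with the first point, this proves $(ii)$ directly rather than via $(i)$.

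For $(i)$, the reduction $\vartheta_{r}=\vartheta_{s}$ does not by itself finish the job, since the hypothesis is $\mathbf{L}_{r}=0$, not $\mathbf{L}_{s}=0$. You still have to show how $\mathbf{L}_{r}=0$ together with the invariance of $\beta'$ kills the coassociator. Until that computation is written out, $(i)$, and your derivation of $(ii)$ from it, rests on \cite{BLST}, exactly as in the paper.
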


Let $(A, \ast)$ be a Leibniz algebra. For any $r\in A\otimes A$,
we define a linear map $r^{\sharp}: A^{\ast}\rightarrow A$ by
$$
\langle r^{\sharp}(\xi_{1}),\; \xi_{2}\rangle=\langle\xi_{1}\otimes\xi_{2},\; r\rangle,
$$
for any $\xi_{1}, \xi_{2}\in A^{\ast}$ and denote $\mathcal{I}=r^{\sharp}-\tau(r)^{\sharp}: A^{\ast}\rightarrow A$.

\begin{defi}[\cite{BLST}]\label{def:fact-Leib}
Let $(A, \ast)$ be a Leibniz algebra, $r\in A\otimes A$ and $(A, \ast, \vartheta_{r})$
be a quasi-triangular Leibniz bialgebra associated with $r$. If $\mathcal{I}=
r^{\sharp}-\tau(r)^{\sharp}: A^{\ast}\rightarrow A$ is an isomorphism of vector spaces,
then $(A, \ast, \vartheta_{r})$ is called a {\bf factorizable Leibniz bialgebra}.
\end{defi}

In a factorizable Leibniz bialgebra $(A, \ast, \vartheta_{r})$, each element $a\in A$
can be decomposed into $a=a_{+}+a_{-}$, where $a_{+}\in\Img(r^{\sharp})$ and
$a_{-}\in\Img(\tau(r)^{\sharp})$ \cite{BLST}. Note that $\mathcal{I}=0$ if
$(A, \ast, \vartheta_{r})$ is a triangular Leibniz bialgebra, we can view the
factorizable Leibniz bialgebra is the opposite of the triangular Leibniz bialgebra.
In this paper, we consider the finite-dimensional and infinite-dimensional
Leibniz bialgebras induced by Lie bialgebras and perm bialgebras.

\bigskip
\section{Quasi-triangular Leibniz bialgebras from perm bialgebras} \label{sec:qtbia-perm}
In this section, we recall the notions of perm bialgebras and quadratic Lie algebras.
We show that there is a Leibniz bialgebra on the tensor product of a perm bialgebra
and a quadratic Lie algebra, and prove that the induced Leibniz bialgebra is
quasi-triangular (resp. triangular, factorizable) if the perm bialgebra is
quasi-triangular (resp. triangular, factorizable).

For the construction of the Leibniz coalgebras, we can obtain the dual conclusion
for Proposition \ref{pro:Lie+perm}. Recall that  {\bf Lie coalgebra} $(\g, \delta)$
is a vector space $\g$ with a linear map $\delta: \g\rightarrow\g\otimes\g$,
such that $\tau\delta=-\delta$ and $(\id\otimes\delta)\delta
-(\tau\otimes\id)(\id\otimes\delta)\delta=(\delta\otimes\id)\delta$.
One can check that $(\g, \delta)$ is a Lie coalgebra if and only if
$(\g^{\ast}, \delta^{\ast})$ is a Lie algebra.

\begin{defi}\label{def:permcoalg}
A {\bf perm coalgebra} $(B, \nu)$ is a vector space $B$
with a linear map $\nu: B\rightarrow B\otimes B$ satisfying
$$
(\nu\otimes\id)\nu=(\id\otimes\nu)\nu=(\tau\otimes\id)(\id\otimes\nu)\nu.
$$
\end{defi}

One can check that $(B, \nu)$ is a perm coalgebra if and only if
$(B^{\ast}, \nu^{\ast})$ is a perm algebra.
For the dual version of Proposition \ref{pro:Lie+perm}, we have

\begin{pro}\label{pro:perm-coLie}
Let $(\g, \delta)$ be a Lie coalgebra and $(B, \nu)$ be a perm coalgebra.
Define a linear map $\vartheta: \g\otimes B\rightarrow(\g\otimes B)\otimes(\g\otimes B)$ by
$$
\vartheta(g\otimes b)=\delta(g)\bullet\nu(b)=\sum_{(g)}\sum_{(b)}
(g_{(1)}\otimes b_{(1)})\otimes(g_{(2)}\otimes b_{(2)}),
$$
for any $g\in\g$ and $b\in B$, where $\delta(g)=\sum_{(g)}g_{(1)}\otimes g_{(2)}$,
and $\nu(b)=\sum_{(b)}b_{(1)}\otimes b_{(2)}$ in the Sweedler notation.
Then $(\g\otimes B, \vartheta)$ is a Leibniz coalgebra.
\end{pro}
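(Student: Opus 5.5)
Two routes are available. The quickest is duality with Proposition \ref{pro:Lie+perm}: $(\g,\delta)$ is a Lie coalgebra if and only if $(\g^{\ast},\delta^{\ast})$ is a Lie algebra, and $(B,\nu)$ is a perm coalgebra if and only if $(B^{\ast},\nu^{\ast})$ is a perm algebra. Under the identification $(\g\otimes B)^{\ast}\cong\g^{\ast}\otimes B^{\ast}$, the dual map $\vartheta^{\ast}$ is exactly the product of Proposition \ref{pro:Lie+perm} on $\g^{\ast}\otimes B^{\ast}$, up to reordering the tensor factors, since $\vartheta$ is the tensor product of $\delta$ and $\nu$ followed by the middle-swap. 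The Leibniz coalgebra identity is the transpose of the Leibniz identity, so it would follow at once.

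This argument needs finite dimension, or at least the usual care. The paper later wants an infinite-dimensional (graded) version, so I would instead give a direct Sweedler computation that mirrors the proof of Proposition \ref{pro:Lie+perm}. That computation works in any dimension.

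For the direct proof, write $\delta(g)=\sum g_{(1)}\otimes g_{(2)}$ and $\nu(b)=\sum b_{(1)}\otimes b_{(2)}$. Let $\sigma$ be the shuffle isomorphism $(\g^{\otimes 3})\otimes(B^{\otimes 3})\to(\g\otimes B)^{\otimes 3}$ that interleaves the factors. The key observation is that each of the three terms in the Leibniz coalgebra identity, applied to $g\otimes b$, is $\sigma$ of a tensor product of a $\g$-term and a $B$-term:
\begin{align*}
(\vartheta\otimes\id)\vartheta(g\otimes b)&=\sigma\big((\delta\otimes\id)\delta(g)\otimes(\nu\otimes\id)\nu(b)\big),\\
(\id\otimes\vartheta)\vartheta(g\otimes b)&=\sigma\big((\id\otimes\delta)\delta(g)\otimes(\id\otimes\nu)\nu(b)\big),\\
(\tau\otimes\id)(\id\otimes\vartheta)\vartheta(g\otimes b)&=\sigma\big((\tau\otimes\id)(\id\otimes\delta)\delta(g)\otimes(\tau\otimes\id)(\id\otimes\nu)\nu(b)\big).
\end{align*}
For the last line, note that the twist on $(\g\otimes B)^{\otimes 2}$ corresponds under $\sigma$ to the twist on $\g^{\otimes 2}$ tensored with the twist on $B^{\otimes 2}$. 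Checking these three identities is routine bookkeeping.

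Next, let $P$ denote the common value $(\nu\otimes\id)\nu(b)=(\id\otimes\nu)\nu(b)=(\tau\otimes\id)(\id\otimes\nu)\nu(b)$ given by the perm coalgebra axioms. Then
$$(\id\otimes\vartheta)\vartheta-(\tau\otimes\id)(\id\otimes\vartheta)\vartheta-(\vartheta\otimes\id)\vartheta$$
applied to $g\otimes b$ equals
$$\sigma\Big(\big[(\id\otimes\delta)\delta-(\tau\otimes\id)(\id\otimes\delta)\delta-(\delta\otimes\id)\delta\big](g)\otimes P\Big).$$
The bracket vanishes by the co-Jacobi identity of $(\g,\delta)$, which finishes the proof. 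This is exactly dual to the step in Proposition \ref{pro:Lie+perm} where the three perm expressions coincide and Jacobi finishes the argument. Antisymmetry of $\delta$ is not needed.

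The main obstacle is the bookkeeping in the first step: verifying that the $\tau$ acting on $(\g\otimes B)\otimes(\g\otimes B)$ really splits as $\tau_{\g}\otimes\tau_B$ under the shuffle. This is the only place where sign or ordering errors could creep in. Once that is checked, the rest is immediate.
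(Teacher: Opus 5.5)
Your proposal is correct and is essentially the paper's own proof. The paper's $\bullet$ notation is your shuffle $\sigma$: it rewrites each term of the Leibniz co-identity as a $\bullet$-product of a $\g$-part and a $B$-part, uses the perm coalgebra axioms to make the $B$-parts coincide, and then applies the co-Jacobi identity of $(\g,\delta)$ without using antisymmetry of $\delta$.
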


\begin{proof}
For any $\sum_{l}g'_{l}\otimes g''_{l}\otimes g'''_{l}\in \g\otimes\g\otimes\g$ and
$\sum_{k}b'_{k}\otimes b''_{k}\otimes b'''_{k}\in B\otimes B\otimes B$, we denote
$$
\Big(\sum_{l}g'_{l}\otimes g''_{l}\otimes g'''_{l}\Big)\bullet
\Big(\sum_{k}b'_{k}\otimes b''_{k}\otimes b'''_{k}\Big)
=\sum_{l}\sum_{k}(g'_{l}\otimes b'_{k})\otimes
(g''_{l}\otimes b''_{k})\otimes(g'''_{l}\otimes b'''_{k}).
$$
Then, by using the above notations, since $(B, \nu)$ is a perm coalgebra
and $(\g, \delta)$ is a Lie coalgebra, for any $g\otimes b\in\g\otimes B$, we have
\begin{align*}
&\;(\vartheta\otimes\id)(\vartheta(g\otimes b))
+(\tau\otimes\id)((\id\otimes\vartheta)(\vartheta(g\otimes b)))\\
=&\;(\delta\otimes\id)(\delta(g))\bullet(\nu\otimes\id)(\nu(b))
+(\tau\otimes\id)((\id\otimes\delta)(\delta(g)))\bullet
(\tau\otimes\id)((\id\otimes\nu)(\nu(b)))\\
=&\;(\id\otimes\delta)(\delta(g))\bullet(\id\otimes\nu)(\nu(b))\\
=&\;(\id\otimes\vartheta)(\vartheta(g\otimes b)).
\end{align*}
Thus, $(\g\otimes B, \vartheta)$ is a Leibniz coalgebra.
\end{proof}

Let $(\g, [-,-])$ be a Lie algebra. A bilinear form $\omega(-,-)$ on $(\g, [-,-])$
is called {\bf invariant} if it satisfies
$$
\omega([g_{1}, g_{2}],\; g_{3})=\omega(g_{1},\; [g_{2}, g_{3}]),
$$
for any $g_{1}, g_{2}, g_{3}\in\g$. A {\bf quadratic Lie algebra}, denoted by
$(\g, [-,-], \omega)$, is a Lie algebra $(\g, [-,-])$ together with a symmetric
invariant nondegenerate bilinear form $\omega(-,-)$.
Let $(\g, [-,-], \omega)$ be a quadratic Lie algebra. Then the bilinear form
$\omega(-,-)$ can naturally expand to the tensor product $\g\otimes\g\otimes
\cdots\otimes\g$, i.e.,
$$
\omega(-,-):\qquad (\underbrace{\g\otimes\cdots\otimes\g}_{\mbox{\tiny $k$-fold}})\otimes
(\underbrace{\g\otimes\cdots\otimes\g}_{\mbox{\tiny $k$-fold}})\longrightarrow\Bbbk,
$$
$\omega(g_{1}\otimes g_{2}\otimes\cdots\otimes g_{k},\ \ g'_{1}\otimes g'_{2}
\otimes\cdots\otimes g'_{k})=\prod_{i=1}^{k}\omega(g_{i}, g'_{i})$,
for any $g_{1}, g_{2},\cdots, g_{k}, g'_{1}, g'_{2},\cdots, g'_{k}\in\g$.
Then $\omega(-,-)$ on $\g\otimes\g\otimes\cdots\otimes\g$ is also a
nondegenerate bilinear form.

\begin{lem}\label{lem:Lie-dual}
Let $(\g, [-,-], \omega)$ be a quadratic Lie algebra. Define a linear map
$\delta_{\omega}: \g\rightarrow\g\otimes\g$ by $\omega(\delta_{\omega}(g_{1}),\;
g_{2}\otimes g_{3})=\omega(g_{1},\; [g_{2}, g_{3}])$, for any $g_{1}, g_{2}, g_{3}\in\g$.
Then $(\g, \delta_{\omega})$ is a Lie coalgebra.
\end{lem}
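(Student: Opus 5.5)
The plan is to transport everything through the nondegenerate pairing $\omega$ on tensor powers of $\g$, so that both coalgebra axioms become statements about the bracket. Since $\omega$ is nondegenerate on $\g\otimes\g$, the defining relation $\omega(\delta_{\omega}(g_{1}), g_{2}\otimes g_{3})=\omega(g_{1}, [g_{2}, g_{3}])$ determines $\delta_{\omega}(g_{1})$ uniquely. Equivalently, $\delta_{\omega}$ is the transpose of the bracket under the identification $\g\cong\g^{\ast}$ given by $\omega$, so $\delta_{\omega}$ is well defined and linear. The strategy is then to verify each identity required of a Lie coalgebra by pairing both sides with an arbitrary element $g_{2}\otimes g_{3}$ (for antisymmetry) or $g_{2}\otimes g_{3}\otimes g_{4}$ (for the co-Jacobi identity), and using nondegeneracy of $\omega$ on $\g^{\otimes 2}$ and $\g^{\otimes 3}$.

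\emph{Antisymmetry.} The twist $\tau$ is self-adjoint for the extended form, i.e. $\omega(\tau(x), y\otimes z)=\omega(x, z\otimes y)$. Hence
$$
\omega(\tau\delta_{\omega}(g_{1}), g_{2}\otimes g_{3})=\omega(g_{1}, [g_{3}, g_{2}])=-\omega(\delta_{\omega}(g_{1}), g_{2}\otimes g_{3}),
$$
which gives $\tau\delta_{\omega}=-\delta_{\omega}$.

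\emph{Co-Jacobi identity.} First compute the adjoints of the three compositions. For $(\delta_{\omega}\otimes\id)\delta_{\omega}$, write $\delta_{\omega}(g_{1})=\sum g_{(1)}\otimes g_{(2)}$. Pairing with $g_{2}\otimes g_{3}\otimes g_{4}$ gives $\sum\omega(g_{(1)},[g_{2},g_{3}])\,\omega(g_{(2)},g_{4})$, which equals $\omega(\delta_{\omega}(g_{1}), [g_{2},g_{3}]\otimes g_{4})$ and hence $\omega(g_{1}, [[g_{2},g_{3}],g_{4}])$. In the same way,
$$
\omega((\id\otimes\delta_{\omega})\delta_{\omega}(g_{1}), g_{2}\otimes g_{3}\otimes g_{4})=\omega(g_{1}, [g_{2},[g_{3},g_{4}]]).
$$
Using self-adjointness of $\tau\otimes\id$, the remaining term pairs to $\omega(g_{1}, [g_{3},[g_{2},g_{4}]])$. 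The co-Jacobi identity is therefore equivalent, after pairing, to
$$
[g_{2},[g_{3},g_{4}]]-[g_{3},[g_{2},g_{4}]]=[[g_{2},g_{3}],g_{4}],
$$
which is the Jacobi identity in Leibniz form. Nondegeneracy of $\omega$ on $\g$ then finishes the argument.

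The only delicate point is the bookkeeping in the adjoint computation: one must check that the extended form factorizes correctly, so that $\omega((\delta_{\omega}\otimes\id)x, y\otimes z\otimes w)=\omega(x, [y,z]\otimes w)$ and similarly for $\id\otimes\delta_{\omega}$. This follows by writing $x$ as a sum of pure tensors and applying the definition of $\delta_{\omega}$ factorwise. Notably, invariance and symmetry of $\omega$ are not needed beyond nondegeneracy, so no real obstacle is expected. Alternatively, one could note that $\omega$ gives an isomorphism $\g\cong\g^{\ast}$ under which $\delta_{\omega}$ corresponds to the dual of the bracket, and then invoke the fact recalled above that $(\g,\delta)$ is a Lie coalgebra if and only if $(\g^{\ast},\delta^{\ast})$ is a Lie algebra.
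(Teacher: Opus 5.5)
Your proposal is correct and follows the paper's proof. Both arguments pair each side of the co-Jacobi identity with an arbitrary $g_{2}\otimes g_{3}\otimes g_{4}$, reduce it to the Jacobi identity $[g_{2},[g_{3},g_{4}]]-[g_{3},[g_{2},g_{4}]]=[[g_{2},g_{3}],g_{4}]$, and conclude by nondegeneracy of the extended form. Beyond the paper, you also spell out the antisymmetry check and the well-definedness of $\delta_{\omega}$, which the paper leaves as easy, and your remark that only nondegeneracy of $\omega$ is used is accurate.
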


\begin{proof}
By the definition of $\delta_{\omega}$, it is easy to see that $\tau\delta_{\omega}
=-\delta_{\omega}$. Moreover, for any $g, g_{1}, g_{2}, g_{3}\in\g$, we get
\begin{align*}
&\;\omega\big((\id\otimes\delta_{\omega})(\delta_{\omega}(g))
-(\tau\otimes\id)((\id\otimes\delta_{\omega})(\delta_{\omega}(g))),\ \
g_{1}\otimes g_{2}\otimes g_{3}\big)\\
=&\;\omega\big(g,\ \ [g_{1}, [g_{2}, g_{3}]]-[g_{2}, [g_{1}, g_{3}]]\big)\\
=&\;\omega\big(g,\ \ [[g_{1}, g_{2}], g_{3}])\big)\\
=&\;\omega\big((\delta_{\omega}\otimes\id)(\delta_{\omega}(g)),\;
g_{1}\otimes g_{2}\otimes g_{3}\big).
\end{align*}
That is, $(\id\otimes\delta_{\omega})\delta_{\omega}-(\tau\otimes\id)
(\id\otimes\delta_{\omega})\delta_{\omega}=(\delta_{\omega}\otimes\id)\delta_{\omega}$.
Thus $(\g, \delta_{\omega})$ is a Lie coalgebra.
\end{proof}

\begin{ex}\label{ex:colib}
Consider the Lie algebra $\mathfrak{sl}_{2}$ is a 3-dimensional Lie algebra
with a basis $\{x, y, h\}$, where $[x, y]=h$, $[h, x]=2x$ and $[h, y]=-2y$. Let
$\omega(-,-)$ be one fourth of the Killing form on $\mathfrak{sl}_{2}(\Bbbk)$, i.e.,
$\omega(x, y)=1=\omega(y, x)$ and $\omega(h, h)=2$. Then $(\mathfrak{sl}_{2},
[-,-], \omega)$ is a quadratic Lie algebra. Define a coproduct $\delta_{\omega}$ on
$\mathfrak{sl}_{2}$ by $\omega(\delta_{\omega}(g_{1}),\; g_{2}\otimes g_{3})
=\omega(g_{1},\; [g_{2}, g_{3}])$, i.e.,
$$
\delta_{\omega}(x)=x\otimes h-h\otimes x,\qquad
\delta_{\omega}(y)=h\otimes y-y\otimes h,\qquad
\delta_{\omega}(h)=2y\otimes x-2x\otimes y,
$$
then we get a Lie coalgebra $(\mathfrak{sl}_{2}, \delta_{\omega})$.
\end{ex}

Recall that a {\bf bimodule $(V, \kkl, \kkr)$ over a perm algebra $(B, \diamond)$} is a
vector space $V$ with two linear maps $\kkl, \kkr: B\rightarrow\gl(V)$ such that
for any $b_{1}, b_{2}\in B$,
$$
\kkl(b_{1}\diamond b_{2})=\kkl(b_{1})\kkl(b_{2})=\kkl(b_{2})\kkl(b_{1}),\qquad\qquad
\kkr(b_{1}\diamond b_{2})=\kkr(b_{2})\kkr(b_{1})=\kkr(b_{2})\kkl(b_{1})=\kkl(b_{1})\kkr(b_{2}).
$$
In particular, $(B, \ffl_{B}, \ffr_{B})$ is a bimodule over $(B, \diamond)$, which is
called the regular bimodule, where $\ffl_{B}, \ffr_{B}: B\rightarrow\gl(B)$ are
given by $\ffl_{B}(b_{1})(b_{2})=b_{1}\diamond b_{2}$ and $\ffr_{B}(b_{1})(b_{2})
=b_{2}\diamond b_{1}$ for any $b_{1}, b_{2}\in B$.
Recall that a {\bf perm bialgebra} is a triple $(B, \diamond, \nu)$ such that
$(B, \diamond)$ is a perm algebra, $(B, \nu)$ is a perm coalgebra, and the
following compatibility condition holds:
\begin{align*}
&\qquad\qquad (\ffr_{B}(b_{1})\otimes\id)\nu(b_{2})
=\tau((\ffr_{B}(b_{2})\otimes\id)\nu(b_{1})),   \\
& \nu(b_{1}\diamond b_{2})=((\ffl_{B}-\ffr_{B})(b_{1})\otimes\id)\nu(b_{2})
+(\id\otimes\ffr_{B}(b_{2}))\nu(b_{1})  \\[-1mm]
&\qquad\quad\ \ \;\, =(\id\otimes\ffl_{B}(b_{1}))\nu(b_{2})
+((\ffl_{B}-\ffr_{B})(b_{2})\otimes\id)(\nu(b_{1})-\tau(\nu(b_{1})),
\end{align*}
for any $b_{1}, b_{2}\in B$. Following from Proposition \ref{pro:perm-coLie} and Lemma
\ref{lem:Lie-dual}, we can construct a Leibniz bialgebra from a perm bialgebra and a
quadratic Lie algebra as the following theorem.

\begin{thm}\label{thm:permbia-Lbi}
Let $(B, \diamond, \nu)$ be a perm bialgebra and $(\g, [-,-], \omega)$ be a quadratic
Lie algebra and $(\g\otimes B, \ast)$ be the induced Leibniz algebra by
$(\g, [-,-])$ and $(B, \diamond)$, i.e., $(g_{1}, b_{1})\ast(g_{2}, b_{2})
=[g_{1}, g_{2}]\otimes(b_{1}\diamond b_{2})$ for any $g_{1}, g_{2}\in\g$ and
$b_{1}, b_{2}\in B$. Define a linear map $\vartheta: \g\otimes B\rightarrow
(\g\otimes B)\otimes(\g\otimes B)$ by
\begin{align}
\vartheta(g\otimes b)=\delta_{\omega}(g)\bullet\nu(b))
:=\sum_{(g)}\sum_{(b)}(g_{(1)}\otimes b_{(1)})\otimes(g_{(2)}\otimes b_{(2)}),\label{copro}
\end{align}
for any $g\in\g$ and $b\in B$, where $\delta_{\omega}(g)=\sum_{(g)}g_{(1)}\otimes g_{(2)}$
and $\nu(b)=\sum_{(b)}b_{(1)}\otimes b_{(2)}$ in the Sweedler notation.
Then $(\g\otimes B, \ast, \vartheta)$ is a Leibniz bialgebra, which is called the
{\bf Leibniz bialgebra induced from $(B, \diamond, \nu)$ by $(\g, [-,-], \omega)$}.
\end{thm}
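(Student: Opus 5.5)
By Proposition \ref{pro:Lie+perm}, $(\g\otimes B,\ast)$ is a Leibniz algebra. By Lemma \ref{lem:Lie-dual} and Proposition \ref{pro:perm-coLie}, $(\g\otimes B,\vartheta)$ is a Leibniz coalgebra. It therefore remains to verify the two compatibility conditions of Definition \ref{def:Le-bialg} for $a_{1}=g_{1}\otimes b_{1}$ and $a_{2}=g_{2}\otimes b_{2}$. In every term the Lie factor and the perm factor separate, since $\fl_{\g\otimes B}(g\otimes b)=\ad_{g}\otimes\ffl_{B}(b)$ and $\fr_{\g\otimes B}(g\otimes b)=-\ad_{g}\otimes\ffr_{B}(b)$. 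Each side of each identity can thus be written as a sum of terms $X\bullet Y$ with $X\in\g\otimes\g$ and $Y\in B\otimes B$.

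\textbf{Lie side.} From invariance and symmetry of $\omega$, $\delta_{\omega}$ is $\ad$-equivariant:
\begin{align*}
\delta_{\omega}([g_{1},g_{2}])&=(\ad_{g_{1}}\otimes\id+\id\otimes\ad_{g_{1}})\delta_{\omega}(g_{2})\\
&=-(\ad_{g_{2}}\otimes\id+\id\otimes\ad_{g_{2}})\delta_{\omega}(g_{1}).
\end{align*}
In addition, $\tau\delta_{\omega}=-\delta_{\omega}$. I will also need the identity
$$
(\ad_{g_{2}}\otimes\id)\delta_{\omega}(g_{1})=\tau\big((\ad_{g_{1}}\otimes\id)\delta_{\omega}(g_{2})\big).
$$
To check it, pair both sides with $x\otimes y$ via $\omega$. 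The left side gives $-\omega(g_{1},[[g_{2},x],y])$. The right side gives $-\omega(g_{2},[[g_{1},y],x])$, which by invariance equals $\omega(g_{1},[y,[g_{2},x]])$. The two expressions agree.

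\textbf{First compatibility.} The left side is $\tau\big((\ad_{g_{2}}\otimes\id)\delta_{\omega}(g_{1})\bullet(\ffr_{B}(b_{2})\otimes\id)\nu(b_{1})\big)$, up to the common sign. Since $\tau$ acts factorwise on $\bullet$-products, the identity above together with the first perm-bialgebra axiom $(\ffr_{B}(b_{1})\otimes\id)\nu(b_{2})=\tau((\ffr_{B}(b_{2})\otimes\id)\nu(b_{1}))$ gives the condition directly.

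\textbf{Second compatibility.} The left side is $\vartheta(a_{1}\ast a_{2})=\delta_{\omega}([g_{1},g_{2}])\bullet\nu(b_{1}\diamond b_{2})$. I will use the equivariance to write $\delta_{\omega}([g_{1},g_{2}])$ in both forms (via $g_{1}$ acting on $\delta_{\omega}(g_{2})$, and via $g_{2}$ acting on $\delta_{\omega}(g_{1})$). For $\nu(b_{1}\diamond b_{2})$ I will use both expressions from the second perm-bialgebra axiom. The right side expands into terms of types $(\ad_{g_{1}}\otimes\id)\delta(g_{2})\bullet(\ffl_{B}(b_{1})\otimes\id)\nu(b_{2})$, $(\id\otimes\ad_{g_{1}})\delta(g_{2})\bullet(\id\otimes\ffl_{B}(b_{1}))\nu(b_{2})$, and terms with $g_{2},b_{2}$ acting on $\delta(g_{1})\bullet(\id+\tau)\nu(b_{1})$. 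The $(\id+\tau)$ combined with $\tau\delta=-\delta$ turns into $\delta(g_{1})\bullet(\nu(b_{1})-\tau\nu(b_{1}))$.

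\textbf{Main obstacle.} The hard step is matching terms. A single perm expression does not split as a product matching a single Lie expression. My strategy is to write the target as $\tfrac12$ of the first form plus $\tfrac12$ of the second form on the perm side, or alternatively to pair the ``$b_{1}$-acting'' perm terms with the $g_{1}$-form of $\delta_{\omega}([g_{1},g_{2}])$ and the ``$b_{2}$-acting'' ones with the $g_{2}$-form. Here I use the fact that tensor terms like $X\bullet Y$ are bilinear, so each perm summand may be paired with whichever Lie rewriting is convenient. I expect that mixed terms such as $(\ffr_{B}(b_{1})\otimes\id)\nu(b_{2})$ will need the first compatibility (already proven) to be converted into $\tau$-twisted terms in $\nu(b_{1})$. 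I will carry out this bookkeeping term by term.
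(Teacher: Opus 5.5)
Your factorization into Lie$\,\bullet\,$perm pieces, the equivariance of $\delta_{\omega}$, the twist identity, and the verification of the first compatibility are all correct, and they match the paper's treatment of $\Phi$. The gap is in the second compatibility. This is the substance of the theorem, and you leave it as unexecuted bookkeeping. The obstacle you describe also shows that the key step has not been found.

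Equivariance only relates the \emph{sums} $(\ad_{g_{1}}\otimes\id+\id\otimes\ad_{g_{1}})\delta_{\omega}(g_{2})$ and $-(\ad_{g_{2}}\otimes\id+\id\otimes\ad_{g_{2}})\delta_{\omega}(g_{1})$. The right-hand side, however, contains $(\ad_{g_{1}}\otimes\id)\delta_{\omega}(g_{2})$ and $(\id\otimes\ad_{g_{1}})\delta_{\omega}(g_{2})$ separately, with different perm factors $(\ffl_{B}(b_{1})\otimes\id)\nu(b_{2})$ and $(\id\otimes\ffl_{B}(b_{1}))\nu(b_{2})$. Regrouping by equivariance, or averaging the two perm forms, does not close this. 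What you need are the termwise relations
\begin{align*}
(\ad_{g_{1}}\otimes\id)\delta_{\omega}(g_{2})&=-(\id\otimes\ad_{g_{2}})\delta_{\omega}(g_{1}),\\
(\id\otimes\ad_{g_{1}})\delta_{\omega}(g_{2})&=-(\ad_{g_{2}}\otimes\id)\delta_{\omega}(g_{1}).
\end{align*}
The first follows from your twist identity together with $\tau\delta_{\omega}=-\delta_{\omega}$. The second then follows from equivariance.

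Set $A_{1}:=(\ad_{g_{2}}\otimes\id)\delta_{\omega}(g_{1})$ and $A_{2}:=(\id\otimes\ad_{g_{2}})\delta_{\omega}(g_{1})$. Using the termwise relations, the right-hand side of the second compatibility becomes
\begin{align*}
&-A_{1}\bullet\big[(\id\otimes\ffl_{B}(b_{1}))\nu(b_{2})+((\ffl_{B}-\ffr_{B})(b_{2})\otimes\id)(\nu(b_{1})-\tau\nu(b_{1}))\big]\\
&-A_{2}\bullet\big[(\ffl_{B}(b_{1})\otimes\id)\nu(b_{2})+(\id\otimes\ffr_{B}(b_{2}))(\nu(b_{1})-\tau\nu(b_{1}))\big].
\end{align*}
The first bracket is exactly the second expression for $\nu(b_{1}\diamond b_{2})$ in the perm bialgebra axiom. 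In the second bracket, the first perm axiom gives $(\id\otimes\ffr_{B}(b_{2}))\tau\nu(b_{1})=\tau((\ffr_{B}(b_{2})\otimes\id)\nu(b_{1}))=(\ffr_{B}(b_{1})\otimes\id)\nu(b_{2})$. This turns the second bracket into the first expression for $\nu(b_{1}\diamond b_{2})$.

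Hence the right-hand side equals $-(A_{1}+A_{2})\bullet\nu(b_{1}\diamond b_{2})=\delta_{\omega}([g_{1},g_{2}])\bullet\nu(b_{1}\diamond b_{2})=\vartheta(a_{1}\ast a_{2})$. This is precisely the paper's argument: its two tensors $\sum_{(g)}g_{(1)}\otimes[g_{(2)},g']$ and $\sum_{(g)}[g_{(1)},g']\otimes g_{(2)}$ are $-A_{2}$ and $-A_{1}$.

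Your remark that no single perm expression matches a single Lie expression is therefore mistaken. Once the Lie side is collapsed to the two tensors $A_{1}$ and $A_{2}$, each carries exactly one form of the perm compatibility, and no averaging is needed.
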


\begin{proof}
By Proposition \ref{pro:perm-coLie} and Lemma \ref{lem:Lie-dual}, we get
that $(\g\otimes B, \vartheta)$ is a Leibniz coalgebra. Thus, we only need to show
\begin{align*}
&\; \Phi:=\tau((\fr_{\g\otimes B}(g'\otimes b')\otimes\id)
(\vartheta(g\otimes b)))-(\fr_{\g\otimes B}(g\otimes b)\otimes\id)
(\vartheta(g'\otimes b'))=0,\\
&\; \Psi:=\vartheta((g\otimes b)\ast(g'\otimes b'))
-(\id\otimes\fr_{\g\otimes B}(g'\otimes b')
-(\fl_{\g\otimes B}+\fr_{\g\otimes B})(g'\otimes b')\otimes\id)((\id+\hat{\tau})
(\vartheta(g\otimes b)))\\[-1mm]
&\qquad\qquad\qquad\qquad-(\id\otimes\fl_{\g\otimes B}(g\otimes b)
+\fl_{\g\otimes B}(g\otimes b)\otimes\id)(\vartheta(g'\otimes b'))=0,
\end{align*}
for any $g, g'\in\g$ and $b, b'\in B$. In fact, for any $e, f\in\g$,
$$
\omega\Big(\sum_{(g)}g_{(2)}\otimes[g_{(1)}, g'],\ \
e\otimes f\Big)=\omega(g,\; [[g', f], e])=\omega\Big(\sum_{(g')}[g'_{(1)}, g]
\otimes g'_{(2)},\ \ e\otimes f\Big),
$$
we obtain
$$
\Phi=\Big(\sum_{(g)}g_{(2)}\otimes[g_{(1)}, g']\Big)\bullet
\Big((\ffr_{B}(b)\otimes\id)\nu(b')-\tau((\ffr_{B}(b')\otimes\id)\nu(b))\Big)=0,
$$
since $(B, \diamond, \nu)$ is a perm bialgebra. Moreover, for any $e, f\in\g$, since
$$
\omega\Big(\sum_{(g)}g_{(1)}\otimes[g_{(2)}, g'],\ \ e\otimes f\Big)
=\omega(g,\; [e, [g', f]])=-\omega(g,\; [[g', f], e])=
\omega\Big(\sum_{(g)}g_{(2)}\otimes[g_{(1)}, g'],\ \ e\otimes f\Big),
$$
we obtain $\sum_{(g)}g_{(1)}\otimes[g_{(2)}, g']=-\sum_{(g)}g_{(2)}\otimes[g_{(1)}, g']$.
Similarly, we also have $\sum_{(g)}g_{(1)}\otimes[g_{(2)}, g']=\sum_{(g')}[g, g_{(1)}]
\otimes g'_{(2)}$, $\sum_{(g)}[g_{(1)}, g']\otimes g_{(2)}=\sum_{(g)}[g', g_{(2)}]
\otimes g_{(1)}=\sum_{(g')}g'_{(1)}\otimes[g, g'_{(2)}]=-\sum_{(g)}[ g', g_{(1)}]
\otimes g_{(2)}=-\sum_{(g)}[g_{(2)}, g']\otimes g_{(1)}$ and $\delta_{\omega}([g, g'])
=\sum_{(g)}\big(g_{(1)}\otimes[g_{(2)}, g']+[g_{(1)}, g']\otimes g_{(2)}\big)$.
Therefore, we get
\begin{align*}
\Psi
=&\; \delta_{\omega}([g, g'])\bullet\nu(b\diamond b')\\
&\; -\sum_{(g)}\sum_{(b)}\Big(
(g_{(1)}\otimes[g_{(2)}, g'])\bullet(b_{(1)}\otimes(b_{(2)}\diamond b'))
-([g', g_{(1)}]\otimes g_{(2)})\bullet((b'\diamond b_{(1)})\otimes b_{(2)})\\[-3mm]
&\qquad\qquad\quad
-([g_{(1)}, g']\otimes g_{(2)}\bullet((b_{(1)}\diamond b')\otimes b_{(2)})
+(g_{(2)}\otimes[g_{(1)}, g'])\bullet(b_{(2)}\otimes(b_{(1)}\diamond b'))\\
&\qquad\qquad\quad
-([g', g_{(2)}]\otimes g_{(1)})\bullet((b'\diamond b_{(2)})\otimes b_{(1)})
-([g_{(2)}, g']\otimes g_{(1)}\bullet((b_{(2)}\diamond b')\otimes b_{(1)})\Big)\\
&\; -\sum_{(g')}\sum_{(b')}\Big(
(g'_{(1)}\otimes[g, g'_{(2)}])\bullet(b'_{(1)}\otimes(b\diamond b'_{(2)}))
+([g, g'_{(1)}]\otimes g'_{(2)})\otimes((b\diamond b'_{(1)})\otimes b'_{(2)})\Big)\\
=&\;\Big(\sum_{(g)}g_{(1)}\otimes[g_{(2)}, g']\Big)\bullet\Big(\nu(b\diamond b')
+\sum_{(b)}\big(b_{(2)}\otimes(b_{(1)}\diamond b')-b_{(1)}\otimes
(b_{(2)}\diamond b')\big)\\[-3mm]
&\qquad\qquad\qquad\qquad\qquad-\sum_{(b')}(b\diamond b'_{(1)})\otimes b'_{(2)}\Big)\\[-2mm]
&\;+\Big(\sum_{(g)}[g_{(1)}, g']\otimes g_{(2)}\Big)\bullet\Big(\nu(b\diamond b')
-\sum_{(b)}\big((b'\diamond b_{(1)})\otimes b_{(2)}
-(b_{(1)}\diamond b')\otimes b_{(2)}\\[-3mm]
&\qquad\qquad\qquad\qquad\qquad-(b'\diamond b_{(2)})\otimes b_{(1)}
+(b_{(2)}\diamond b')\otimes b_{(1)}\big)
-\sum_{(b')}b'_{(1)}\otimes(b\diamond b'_{(2)})\Big)\\
=&\; 0,
\end{align*}
since $(B, \diamond, \nu)$ is a perm bialgebra.
Thus, $(\g\otimes B, \ast, \vartheta)$ is a Leibniz bialgebra.
\end{proof}

Recall that a perm bialgebra $(B, \diamond, \nu)$ is called {\bf coboundary}
if there exists an element $r\in B\otimes B$ such that $\nu=\nu_{r}$, where
\begin{align}
\nu_{r}(b)=(\id\otimes\ffr_{B}(b)+(\ffr_{B}-\ffl_{B})(b)\otimes\id)(r), \label{permcobo}
\end{align}
for any $b\in B$. Let $(B, \diamond)$ be a perm algebra. An element
$r=\sum_{i}x_{i}\otimes y_{i}\in B\otimes B$ is said to be {\bf $(\ffl_{B},
\ffr_{B})$-invariant} if $\nu_{r}(b)=0$ for all $b\in B$. The equation
$$
\mathbf{P}_{r}:=r_{12}\diamond r_{23}-r_{13}\diamond r_{23}+r_{12}\diamond r_{13}
-r_{13}\diamond r_{12}=0
$$
is called the {\bf perm Yang-Baxter equation} (or $\PYBE$) in $(B, \diamond)$,
where $r_{12}\diamond r_{23}:=\sum_{i,j}x_{i}\otimes(y_{i}\diamond x_{j})\otimes y_{j}$,
$r_{13}\diamond r_{23}:=\sum_{i,j}x_{i}\otimes x_{j}\otimes(y_{i}\diamond y_{j})$,
$r_{12}\diamond r_{13}:=\sum_{i,j}(x_{i}\diamond x_{j})\otimes y_{i}\otimes y_{j}$ and
$r_{13}\diamond r_{12}:=\sum_{i,j}(x_{i}\diamond x_{j})\otimes y_{j}\otimes y_{i}$.

\begin{pro}[\cite{Lin}]\label{pro:perm-bia}
Let $(B, \diamond)$ be a perm algebra, $r\in B\otimes B$ and $\nu_{r}:
B\rightarrow B\otimes B$ be the linear map defined by Eq. \eqref{permcobo}.
\begin{enumerate}\itemsep=0pt
\item[$(i)$] If $r$ is a symmetric solution of the $\PYBE$ in $(B, \diamond)$, then
     $(B, \diamond, \nu_{r})$ is a perm bialgebra, which is called a {\bf triangular perm
     bialgebra} associated with $r$.
\item[$(ii)$] If $r$ is a solution of the $\PYBE$ in $(B, \diamond)$ and $r-\tau(r)$ is
     $(\ffl_{B}, \ffr_{B})$-invariant, then $(B, \diamond, \nu_{r})$ is a perm bialgebra,
     which is called a {\bf quasi-triangular perm bialgebra} associated with $r$.
\end{enumerate}
\end{pro}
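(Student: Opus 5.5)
This is a cited result from \cite{Lin}. Here is how I would verify it directly, taking $r=\sum_{i}x_{i}\otimes y_{i}$ and working with the definitions. The plan is to show that $\nu_{r}$ satisfies the three axioms of a perm bialgebra: the perm coalgebra identities, the symmetry-type compatibility $(\ffr_{B}(b_{1})\otimes\id)\nu_{r}(b_{2})=\tau((\ffr_{B}(b_{2})\otimes\id)\nu_{r}(b_{1}))$, and the two expressions for $\nu_{r}(b_{1}\diamond b_{2})$. Part $(i)$ is a special case of $(ii)$: if $r$ is symmetric, then $r-\tau(r)=0$ is trivially $(\ffl_{B},\ffr_{B})$-invariant. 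So I would prove $(ii)$ and deduce $(i)$.

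\textbf{Step 1: the algebra-compatibility conditions.} Expand both sides of each condition using Eq. \eqref{permcobo}. I then reduce the differences, using only the perm identities $b_{1}\diamond(b_{2}\diamond b_{3})=(b_{1}\diamond b_{2})\diamond b_{3}=(b_{2}\diamond b_{1})\diamond b_{3}=b_{2}\diamond(b_{1}\diamond b_{3})$. I expect every difference to collapse to an expression of the form
$$(\text{operator depending on }b_{1},b_{2})\big(\nu_{r-\tau(r)}(b)\big)$$
for some $b\in\{b_{1},b_{2}\}$. More precisely, it should be $\nu$ applied to $r-\tau(r)$ with one leg multiplied by $b_{1}$ or $b_{2}$. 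Each such term then vanishes by the invariance of $r-\tau(r)$. This step should not need the $\PYBE$; it is the usual ``coboundary'' phenomenon.

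\textbf{Step 2: the coalgebra identities.} Compute $(\nu_{r}\otimes\id)\nu_{r}(b)$, $(\id\otimes\nu_{r})\nu_{r}(b)$ and $(\tau\otimes\id)(\id\otimes\nu_{r})\nu_{r}(b)$ as sums over $i,j$. The aim is to express the two differences
$$(\nu_{r}\otimes\id)\nu_{r}-(\id\otimes\nu_{r})\nu_{r},\qquad (\id\otimes\nu_{r})\nu_{r}-(\tau\otimes\id)(\id\otimes\nu_{r})\nu_{r}$$
in a specific form. Each should be a sum of terms $G(b)(\mathbf{P}_{r})$, possibly with legs permuted, where $G(b)$ is a combination of $\ffl_{B}(b)$ and $\ffr_{B}(b)$ acting on one tensor factor. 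Any leftover terms should involve $r-\tau(r)$ contracted against $\nu$, and these vanish by invariance. With $\mathbf{P}_{r}=0$, all terms then vanish.

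\textbf{Main obstacle.} The hard part is this bookkeeping in Step 2. I must find exactly which leg-permutations of $\mathbf{P}_{r}$ appear, and check that the non-$\PYBE$ remainders genuinely assemble into invariance expressions rather than needing full symmetry of $r$. I would first handle the symmetric case, where $\tau(r)=r$ simplifies the matching. I would then redo it for general $r$, writing $r=\tau(r)+(r-\tau(r))$ to isolate the extra terms.

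Alternatively, I could dualize. Under the standard Kupershmidt correspondence, $r$ corresponds to an $\mathcal{O}$-operator-like map $r^{\sharp}$. The coalgebra axioms for $\nu_{r}$ are equivalent to $\nu_{r}^{\ast}$ being a perm product on $B^{\ast}$, which might be a cleaner check.
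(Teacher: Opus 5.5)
The paper gives no proof of this proposition; it is quoted from \cite{Lin}, so your outline has to stand on its own. What you actually argue is right. Part $(i)$ is the case $r-\tau(r)=0$ of $(ii)$, and Step 1 checks out without the $\PYBE$. Concretely:
\begin{itemize}
\item Since $(x\diamond b_{2})\diamond b_{1}=(b_{2}\diamond x)\diamond b_{1}$, one gets $(\ffr_{B}(b_{1})\otimes\id)\nu_{r}(b_{2})=(\ffr_{B}(b_{1})\otimes\ffr_{B}(b_{2}))(r)$. So the first compatibility condition is exactly $(\ffr_{B}(b_{1})\otimes\id)\nu_{r-\tau(r)}(b_{2})=0$.
\item The first expression for $\nu_{r}(b_{1}\diamond b_{2})$ holds for every $r$.
\item The second expression differs from $\nu_{r}(b_{1}\diamond b_{2})$ by terms in $s=r-\tau(r)$. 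These vanish once $\nu_{s}(b)=0$ is combined with its flip, i.e.\ with $(\ffl_{B}(b)\otimes\id)(s)=(\id\otimes\ffl_{B}(b))(s)$.
\end{itemize}
So, unlike the Lie case, compatibility is not automatic for coboundaries: two of the three identities genuinely need invariance.

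The gap is Step 2. It carries all the content of $(ii)$, and you only predict it. Your predicted shape is correct, and the dualization you mention at the end is the efficient way to prove it.

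Put $R=r^{\sharp}$, $R'=\tau(r)^{\sharp}$, $S=R-R'$, $\kkl=-\ffl_{B}^{\ast}$ and $\kkr=\ffr_{B}^{\ast}-\ffl_{B}^{\ast}$. The product on $B^{\ast}$ dual to $\nu_{r}$ is $\xi\cdot\eta=\kkl(R\xi)\eta+\kkr(R'\eta)\xi$. The coalgebra axioms become $(\xi_{1}\cdot\xi_{2})\cdot\xi_{3}=\xi_{1}\cdot(\xi_{2}\cdot\xi_{3})=\xi_{2}\cdot(\xi_{1}\cdot\xi_{3})$.

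\begin{enumerate}
\item Since $(\nu_{r}\otimes\id)(r)=r_{12}\diamond r_{23}+r_{12}\diamond r_{13}-r_{13}\diamond r_{12}$, pairing gives $\langle R\xi\diamond R\eta-R(\xi\cdot\eta),\zeta\rangle=-\langle\xi\otimes\eta\otimes\zeta,\mathbf{P}_{r}\rangle$. So the $\PYBE$ says precisely that $R$ is multiplicative, with no symmetry assumption.
\item Invariance of $s$ gives $S\kkl(b)=\ffl_{B}(b)S$, $S\kkr(b)=\ffr_{B}(b)S$ and $\kkl(S\xi)\eta=\kkr(S\eta)\xi$. The first two yield $R'\xi\diamond R'\eta-R'(\xi\cdot\eta)=R\xi\diamond R\eta-R(\xi\cdot\eta)$. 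Hence, under the $\PYBE$, $R'$ is multiplicative as well.
\item The identity $(\xi_{1}\cdot\xi_{2})\cdot\xi_{3}=\xi_{1}\cdot(\xi_{2}\cdot\xi_{3})$ then follows from the bimodule axioms for $(B^{\ast},\kkl,\kkr)$.
\item For the other identity, $\xi_{1}\cdot(\xi_{2}\cdot\xi_{3})-\xi_{2}\cdot(\xi_{1}\cdot\xi_{3})=\kkr(R'\xi_{3})\big(\kkl(S\xi_{1})\xi_{2}-\kkl(S\xi_{2})\xi_{1}\big)=\kkr(R'\xi_{3})\ffr_{B}^{\ast}(S\xi_{2})\xi_{1}$. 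This vanishes because $(c\diamond x)\diamond a=(x\diamond c)\diamond a$.
\end{enumerate}

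Assuming only invariance, the two defects are $\kkl$ and $\kkr$ applied to $R\xi\diamond R\eta-R(\xi\cdot\eta)$. That is exactly your ``$G(b)$ on permuted copies of $\mathbf{P}_{r}$ plus invariance terms''.

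In the triangular case this collapses to Proposition \ref{pro:o-perm}. There $S=0$, and $\nu_{r}^{\ast}$ is the descendant product of the $\mathcal{O}$-operator $r^{\sharp}$. It is perm because the graph of $r^{\sharp}$ is a subalgebra of the semidirect product perm algebra on $B\oplus B^{\ast}$.
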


\begin{ex}\label{ex:tri-permbi}
Consider 2-dimensional perm algebra $(B=\Bbbk\{e_{1}, e_{2}\}, \diamond)$, where
$e_{2}\diamond e_{1}=e_{1}$, $e_{2}\diamond e_{2}=e_{2}$. Then $r=e_{1}\otimes e_{1}$
is a symmetric solution of the $\PYBE$ in $(B, \diamond)$. We get a triangular perm
bialgebra $(B, \diamond, \nu_{r})$, where $\nu_{r}(e_{1})=0$ and $\nu_{r}(e_{2})
=-e_{1}\otimes e_{1}$.
\end{ex}

Let $(B, \diamond)$ be a perm algebra. For any $r\in B\otimes B$,
we can define a linear map $r^{\sharp}: B^{\ast}\rightarrow B$ by
$\langle r^{\sharp}(\xi_{1}),\; \xi_{2}\rangle=\langle\xi_{1}\otimes\xi_{2},\; r\rangle$
for any $\xi_{1}, \xi_{2}\in B^{\ast}$, and denote $\mathcal{I}=r^{\sharp}
-\tau(r)^{\sharp}: B^{\ast}\rightarrow B$.

\begin{defi}[\cite{Lin}]\label{def:fact-perm}
Let $(B, \diamond)$ be a perm algebra, $r\in B\otimes B$, and $(B, \diamond, \nu_{r})$
be a quasi-triangular perm bialgebra associated with $r$. If $\mathcal{I}=
r^{\sharp}-\tau(r)^{\sharp}: B^{\ast}\rightarrow B$ is an isomorphism of vector spaces,
then $(B, \diamond, \nu_{r})$ is called a {\bf factorizable perm bialgebra}.
\end{defi}

These special perm bialgebras are all related to the solutions of the Yang-Baxter equation.
Next, we consider the relation between the solutions of the $\PYBE$ in a perm
algebra and the solutions of the $\LYBE$ in the induced Leibniz algebra.
Let $(\g, [-,-], \omega)$ be a quadratic Lie algebra and $\{e_{1}, e_{2},\cdots, e_{n}\}$
be a basis of $\g$. Since $\omega(-,-)$ is symmetric nondegenerate, we get a basis
$\{f_{1}, f_{2},\cdots, f_{n}\}$ of $\g$, which is called the dual basis of $\{e_{1},
e_{2},\cdots, e_{n}\}$ with respect to $\omega(-,-)$, by $\omega(f_{i}, e_{j})=\delta_{ij}$,
where $\delta_{ij}$ is the Kronecker delta. Here, for some special solutions of the
$\PYBE$ in a perm algebra, we have

\begin{pro}\label{pro:PYBE-LYBE}
Let $(B, \diamond)$ be a perm algebra, $(\g, [-,-], \omega)$ be a quadratic Lie algebra
and $(\g\otimes B, \ast)$ be the induced Leibniz algebra. Suppose that $r=\sum_{i}
x_{i}\otimes y_{i}\in B\otimes B$ is a solution of the $\PYBE$ in $(B, \diamond)$,
$r-\tau(r)$ is $(\ffl_{B}, \ffr_{B})$-invariant, $\{e_{1}, e_{2},\cdots, e_{n}\}$ is
a basis of $\g$ and $\{f_{1}, f_{2},\cdots, f_{n}\}$ is the dual basis of $\{e_{1},
e_{2},\cdots, e_{n}\}$ with respect to $\omega(-,-)$. Then
\begin{align}
\widetilde{r}=\sum_{i, j}(e_{j}\otimes x_{i})\otimes(f_{j}\otimes y_{i})
\in(\g\otimes B)\otimes(\g\otimes B)  \label{r-max}
\end{align}
is a solution of the $\LYBE$ in $(\g\otimes B, \ast)$, and $\widetilde{r}-\tau(\widetilde{r})$
is Leib-invariant.

In particular, $\widetilde{r}$ is a symmetric solution of the $\LYBE$ in $(\g\otimes B, \ast)$
if $r$ is a symmetric solution of the $\PYBE$ in $(B, \diamond)$.
\end{pro}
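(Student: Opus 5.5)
The plan is to separate the $\g$-part and the $B$-part of every tensor, in the same way the proof of Theorem \ref{thm:permbia-Lbi} did. Set $\kappa=\sum_{j}e_{j}\otimes f_{j}\in\g\otimes\g$, so that $\widetilde{r}=\kappa\bullet r$. Since $\omega(-,-)$ is symmetric, $\kappa$ is symmetric and independent of the chosen basis. Invariance of $\omega$ gives the ad-invariance
$$\sum_{j}\big([g,e_{j}]\otimes f_{j}+e_{j}\otimes[g,f_{j}]\big)=0\qquad\text{for all } g\in\g.$$
The symmetry part of the statement is then immediate: $\tau(\widetilde{r})=\tau(\kappa)\bullet\tau(r)=\kappa\bullet\tau(r)$. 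Hence $r=\tau(r)$ gives $\widetilde{r}=\tau(\widetilde{r})$, and in general $\widetilde{r}-\tau(\widetilde{r})=\kappa\bullet(r-\tau(r))$.

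For the $\LYBE$, I will evaluate each of the four terms of $\mathbf{L}_{\widetilde{r}}$ by pairing its $\g$-part against $a\otimes b\otimes c$ via the extended form $\omega$. The reference trilinear form is $\Omega(a,b,c)=\omega([b,c],a)$, which is alternating; let $\Omega$ also denote the corresponding element of $\g^{\otimes 3}$. The $\g$-part of $\widetilde{r}_{12}\ast\widetilde{r}_{13}$ is $\sum[e_{j},e_{k}]\otimes f_{j}\otimes f_{k}$, which pairs to $\Omega(a,b,c)$. That of $\widetilde{r}_{12}\ast\widetilde{r}_{23}$ pairs to $\omega([a,c],b)=-\Omega(a,b,c)$. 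That of $\widetilde{r}_{23}\ast\widetilde{r}_{12}$ pairs to $\omega([c,a],b)=\Omega(a,b,c)$, and that of $\widetilde{r}_{23}\ast\widetilde{r}_{13}$ pairs to $\omega([b,c],a)=\Omega(a,b,c)$. Non-degeneracy of $\omega$ on $\g^{\otimes3}$ then gives
$$\mathbf{L}_{\widetilde{r}}=\Omega\bullet\Big(\sum_{i,l}(x_{i}\diamond x_{l})\otimes y_{i}\otimes y_{l}+\sum_{i,l}x_{i}\otimes(y_{i}\diamond x_{l})\otimes y_{l}-\sum_{i,l}x_{l}\otimes(x_{i}\diamond y_{l})\otimes y_{i}+\sum_{i,l}x_{l}\otimes x_{i}\otimes(y_{i}\diamond y_{l})\Big).$$
I will double-check the signs carefully at this stage.

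The main obstacle is showing that the $B$-tensor in this display, call it $Q\in B^{\otimes3}$, gives $\Omega\bullet Q=0$. Because $\Omega$ is totally antisymmetric, it suffices to prove that the antisymmetrization $\sum_{\sigma\in S_{3}}\mathrm{sgn}(\sigma)\,\sigma\cdot Q$ vanishes, where $\sigma$ permutes tensor factors; $Q$ itself need not vanish. I will express this antisymmetrization through $\mathbf{P}_{r}$ and its permuted copies. The correction terms should be absorbed by the invariance of $s=r-\tau(r)$, that is, $\nu_{s}(b)=0$ for all $b$, applied with $b=x_{i}$ or $b=y_{i}$ in one leg. The perm identities $(b_{1}\diamond b_{2})\diamond b_{3}=(b_{2}\diamond b_{1})\diamond b_{3}$ and $b_{1}\diamond(b_{2}\diamond b_{3})=b_{2}\diamond(b_{1}\diamond b_{3})$ will be used to match terms. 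My first attempt is to write $r=\tau(r)+s$ in the terms where $r$ occurs reversed, and then check that the leftover terms are exactly $(\id\otimes\nu_{s})$- or $(\nu_{s}\otimes\id)$-type expressions.

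Finally, for Leib-invariance of $\kappa\bullet s$, I will expand
$$F(g\otimes b)(\kappa\bullet s)=\big((\fl+\fr)(g\otimes b)\otimes\id-\id\otimes\fr(g\otimes b)\big)(\kappa\bullet s).$$
Using ad-invariance of $\kappa$, the $\g$-parts $\sum[g,e_{j}]\otimes f_{j}$ and $\sum e_{j}\otimes[f_{j},g]$ coincide. The left and right contributions in the first leg carry opposite brackets, so the whole expression collapses to
$$\Big(\sum_{j}[g,e_{j}]\otimes f_{j}\Big)\bullet\big((\ffl_{B}(b)-\ffr_{B}(b))\otimes\id-\id\otimes\ffr_{B}(b)\big)(s),$$
up to an overall sign. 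This is $-\nu_{s}(b)$ up to sign, and it vanishes by hypothesis.
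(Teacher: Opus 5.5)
Your treatment of symmetry and of Leib-invariance is correct. The identity $F(g\otimes b)(\kappa\bullet s)=-\big(\sum_j[g,e_j]\otimes f_j\big)\bullet\nu_s(b)$, with $s=r-\tau(r)$, is a clean packaging of the paper's computation. The $\LYBE$ part, however, has a genuine gap.

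First, there is a sign error. The $\g$-part of $\widetilde r_{23}\ast\widetilde r_{13}$ is $\sum_{j,l}e_l\otimes e_j\otimes[f_j,f_l]$. It pairs with $a\otimes b\otimes c$ to $\omega([b,a],c)=-\omega(a,[b,c])=-\Omega(a,b,c)$, not $+\Omega(a,b,c)$. So the last summand of your display must carry a minus sign, and the correct $B$-tensor is
\[
Q=\sum_{i,j}\big((x_i\diamond x_j)\otimes y_i\otimes y_j+x_i\otimes(y_i\diamond x_j)\otimes y_j-x_i\otimes(x_j\diamond y_i)\otimes y_j-x_i\otimes x_j\otimes(y_j\diamond y_i)\big).
\]

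Second, and more seriously, the reduction ``since $\Omega$ is totally antisymmetric, it suffices that the antisymmetrization of $Q$ vanishes'' is false.

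\begin{itemize}
\item The element $\Omega\bullet Q$ is just $\Omega\otimes Q\in\g^{\otimes 3}\otimes B^{\otimes 3}$ with its factors interleaved. So when $\g$ is non-abelian ($\Omega\neq 0$), it vanishes if and only if $Q=0$.
\item Antisymmetry of $\Omega$ only gives $\Omega\bullet\sigma(Q)=\mathrm{sgn}(\sigma)\,\sigma(\Omega\bullet Q)$.
\item Hence vanishing of $\sum_{\sigma\in S_3}\mathrm{sgn}(\sigma)\sigma(Q)$ is equivalent to vanishing of the $S_3$-symmetrization of $\mathbf{L}_{\widetilde r}$. That is far weaker than $\mathbf{L}_{\widetilde r}=0$.
\end{itemize}

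With your sign, $Q$ would differ from the correct one by $2\sum_{i,j}x_i\otimes x_j\otimes(y_j\diamond y_i)$. It would then genuinely fail to vanish, which may be why the detour looked necessary.

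What you need is that the corrected $Q$ vanishes exactly, and it does. Relabelling $i\leftrightarrow j$ gives the purely formal identity
\[
Q=(\tau\otimes\id)\Big(\mathbf{P}_r-\sum_j\nu_s(x_j)\otimes y_j\Big),\qquad s=r-\tau(r).
\]
So $Q=0$ by the $\PYBE$ together with the $(\ffl_B,\ffr_B)$-invariance $\nu_s(x_j)=0$. No perm identities and no antisymmetrization are involved.

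This is exactly the paper's route. With your $\Omega=\sum_{k,l}[e_k,e_l]\otimes f_k\otimes f_l$, the paper concludes $\mathbf{L}_{\widetilde r}=\Omega\bullet(\tau\otimes\id)(\mathbf{P}_r)$.
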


\begin{proof}
First, note that
\begin{align*}
&\;\widetilde{r}_{12}\ast\widetilde{r}_{13}-\widetilde{r}_{12}\ast\widetilde{r}_{23}
-\widetilde{r}_{23}\ast\widetilde{r}_{12}+\widetilde{r}_{23}\ast\widetilde{r}_{13}\\
=&\;\sum_{i,j}\sum_{k,l}\Big(\big([e_{k}, e_{l}]\otimes f_{k}\otimes f_{l}\big)
\bullet\big((x_{i}\diamond x_{j})\otimes y_{i}\otimes y_{j}\big)
-\big(e_{k}\otimes[f_{k}, e_{l}]\otimes f_{l}\big)\bullet
\big(x_{i}\otimes(y_{i}\diamond x_{j})\otimes y_{j}\big)\\[-4mm]
&\qquad\quad-\big(e_{k}\otimes[e_{l}, f_{k}]\otimes f_{l}\big)
\bullet\big(x_{i}\otimes(x_{j}\diamond y_{i})\otimes y_{j}\big)
+\big(e_{k}\otimes e_{l}\otimes[f_{l}, f_{k}]\big)\bullet
\big(x_{i}\otimes x_{j}\otimes(y_{j}\diamond y_{i})\big)\Big),
\end{align*}
$\sum_{k,l}e_{k}\otimes[f_{k}, e_{l}]\otimes f_{l}=-\sum_{k,l}e_{k}\otimes[e_{l}, f_{k}]
\otimes f_{l}$ and for any $1\leq s, u, v\leq n$,
\begin{align*}
\omega\Big(\sum_{k,l}[e_{k}, e_{l}]\otimes f_{k}\otimes f_{l},\ \
e_{s}\otimes e_{u}\otimes e_{v}\Big)&=\omega([e_{u}, e_{v}],\; e_{s}),\\[-2mm]
\omega\Big(\sum_{k,l}(e_{k}\otimes[f_{k}, e_{l}]\otimes f_{l},\ \
e_{s}\otimes e_{u}\otimes e_{v}\Big)&=\omega([e_{v}, e_{u}],\; e_{s}),\\[-2mm]
\omega\Big(\sum_{k,l}e_{k}\otimes e_{l}\otimes[f_{l}, f_{k}],\ \
e_{s}\otimes e_{u}\otimes e_{v}\Big)&=\omega([e_{v}, e_{u}],\; e_{s}).
\end{align*}
By the nondegeneracy of $\omega(-,-)$, we get $\sum_{k,l}[e_{k}, e_{l}]\otimes f_{k}
\otimes f_{l}=\sum_{k,l}e_{k}\otimes[e_{l}, f_{k}]\otimes f_{l}=-\sum_{k,l}e_{k}\otimes
[f_{k}, e_{l}]\otimes f_{l}=-\sum_{k,l}e_{k}\otimes e_{l}\otimes[f_{l}, f_{k}]$, and so that
\begin{align*}
&\;\widetilde{r}_{12}\ast\widetilde{r}_{13}-\widetilde{r}_{12}\ast\widetilde{r}_{23}
-\widetilde{r}_{23}\ast\widetilde{r}_{12}+\widetilde{r}_{23}\ast\widetilde{r}_{13}\\
=&\;\sum_{i,j}\sum_{k,l}\big([e_{k}, e_{l}]\otimes f_{k}\otimes f_{l}\big)
\bullet\Big((x_{i}\diamond x_{j})\otimes y_{i}\otimes y_{j}
+x_{i}\otimes(y_{i}\diamond x_{j})\otimes y_{j}\\[-5mm]
&\qquad\qquad\qquad\qquad\qquad\qquad-x_{i}\otimes(x_{j}\diamond y_{i})\otimes y_{j}
-x_{i}\otimes x_{j}\otimes(y_{j}\diamond y_{i})\Big).
\end{align*}
If $r-\tau(r)$ is $(\ffl_{B}, \ffr_{B})$-invariant, i.e., $\sum_{i,j}\big(
x_{i}\otimes(y_{i}\diamond x_{j})\otimes y_{j}+(x_{i}\diamond x_{j})\otimes y_{i}
\otimes y_{j}-(x_{j}\diamond x_{i})\otimes y_{i}\otimes y_{j}-y_{i}\otimes
(x_{i}\diamond x_{j})\otimes y_{j}-(y_{i}\diamond x_{j})\otimes x_{i}\otimes y_{j}
+(x_{j}\diamond y_{i})\otimes x_{i}\otimes y_{j}\big)=0$, we obtain
\begin{align*}
\mathbf{P}_{r}&=\sum_{i,j}\Big(x_{i}\otimes(y_{i}\diamond x_{j})\otimes y_{j}
-x_{i}\otimes x_{j}\otimes(y_{i}\diamond y_{j})+(x_{i}\diamond x_{j})\otimes
y_{i}\otimes y_{j}-(x_{i}\diamond x_{j})\otimes y_{j}\otimes y_{i}\Big)\\[-2mm]
&=\sum_{i,j}\Big(y_{i}\otimes(x_{i}\diamond x_{j})\otimes y_{j}
+(y_{i}\diamond x_{j})\otimes x_{i}\otimes y_{j}
-(x_{j}\diamond y_{i})\otimes x_{i}\otimes y_{j}
-x_{j}\otimes x_{i}\otimes(y_{j}\diamond y_{i})\Big).
\end{align*}
Therefore, we get
$\widetilde{r}_{12}\ast\widetilde{r}_{13}-\widetilde{r}_{12}\ast\widetilde{r}_{23}
-\widetilde{r}_{23}\ast\widetilde{r}_{12}+\widetilde{r}_{23}\ast\widetilde{r}_{13}
=\sum_{k,l}\big([e_{k}, e_{l}]\otimes f_{k}\otimes f_{l}\big)\bullet\big(
(\tau\otimes\id)(\mathbf{P}_{r})\big)$. Thus, $\widetilde{r}$ is a solution
of the $\LYBE$ in $(\g\otimes B, \ast)$ if a solution of the $\PYBE$ in $(B, \diamond)$
and $r-\tau(r)$ is $(\ffl_{B}, \ffr_{B})$-invariant.

Second, for any $e_{s}, e_{t}\in\g$, since
$$
\omega\Big(\sum_{j}[g, e_{j}]\otimes f_{j},\ \ e_{s}\otimes e_{t}\Big)
=\omega(g,\; [e_{t}, e_{s}])=-\omega(e_{t},\; [g, e_{s}])
=-\omega\Big(\sum_{j}[e_{j}, g]\otimes f_{j},\ \ e_{s}\otimes e_{t}\Big),
$$
by the nondegeneracy of $\omega(-,-)$, we obtain $\sum_{j}[g, e_{j}]\otimes f_{j}
=-\sum_{j}[e_{j}, g]\otimes f_{j}$. Similarly, we also have $\sum_{j}[g, e_{j}]\otimes f_{j}
=\sum_{j}e_{j}\otimes[f_{j}, g]=\sum_{j}[g, f_{j}]\otimes e_{j}=\sum_{j}f_{j}\otimes[e_{j}, g]
=-\sum_{j}[f_{j}, g]\otimes e_{j}$.
Thus, for any $g\in\g$ and $b\in B$, we get
\begin{align*}
&\;\Big((\fl_{\g\otimes B}+\fr_{\g\otimes B})(g\otimes b)\otimes\id)
-\id\otimes\fr_{\g\otimes B}(g\otimes b)\Big)(\widetilde{r}-\tau(\widetilde{r}))\\
=&\;\sum_{i,j}\Big(([g, e_{j}]\otimes f_{j})\bullet((b\diamond x_{i})\otimes y_{i})
+([e_{j}, g]\otimes f_{j})\bullet((x_{j}\diamond b)\otimes y_{i})\\[-3mm]
&\qquad\quad-(e_{j}\otimes[f_{j}, g])\bullet(x_{i}\otimes(y_{i}\diamond b))
-([g, f_{j}]\otimes e_{j})\bullet((b\diamond y_{i})\otimes x_{i})\\[-1mm]
&\qquad\quad-([f_{j}, g]\otimes e_{j})\bullet((y_{i}\diamond b)\otimes x_{i})
+(f_{j}\otimes[e_{j}, g])\bullet(y_{i}\otimes(x_{i}\diamond b))\Big)\\
=&\;\sum_{i,j}([g, e_{j}]\otimes f_{j})\bullet\Big((b\diamond x_{i})\otimes y_{i}
-(x_{j}\diamond b)\otimes y_{i}-x_{i}\otimes(y_{i}\diamond b))\\[-5mm]
&\qquad\qquad\qquad\qquad\qquad
-(b\diamond y_{i})\otimes x_{i}+(y_{i}\diamond b)\otimes x_{i})
+y_{i}\otimes(x_{i}\diamond b)\Big)\\
=&\; 0,
\end{align*}
if $r-\tau(r)$ is $(\ffl_{B}, \ffr_{B})$-invariant, i.e., $\sum_{i}\big(x_{i}\otimes
(y_{i}\diamond b)+(x_{i}\diamond b)\otimes y_{i}-(b\diamond x_{i})\otimes y_{i}
-y_{i}\otimes(x_{i}\diamond b)-(y_{i}\diamond b)\otimes x_{i}+(b\diamond y_{i})
\otimes x_{i}\big)=0$. That is, $\widetilde{r}$ is a solution
of the $\LYBE$ in $(\g\otimes B, \ast)$ and $\widetilde{r}-\tau(\widetilde{r})$
is Leib-invariant if $r$ is a solution of the $\PYBE$ in $(B, \diamond)$ and $r-\tau(r)$
is $(\ffl_{B}, \ffr_{B})$-invariant.

Finally, note that for any $e_{s}, e_{t}\in\g$,
$$
\omega\Big(\sum_{j}e_{j}\otimes f_{j},\; e_{s}\otimes e_{t}\Big)
=\omega(e_{s}, e_{t})=\omega(e_{t}, e_{s})
=\omega\Big(\sum_{j}f_{j}\otimes e_{j},\; e_{s}\otimes e_{t}\Big).
$$
That is, $\sum_{j}e_{j}\otimes f_{j}$ is symmetric.
We get $\widetilde{r}$ is symmetric if $r$ is symmetric. The proof is finished.
\end{proof}

We now give another main conclusion of this section.

\begin{thm}\label{thm:indu-sLbia}
Let $(B, \diamond, \nu)$ be a perm bialgebra and $(\g, [-,-], \omega)$ be a quadratic
Lie algebra, and $(\g\otimes B, \ast, \vartheta)$ be the induced Leibniz bialgebra from
$(B, \diamond, \nu)$ by $(\g, [-,-], \omega)$. If $\nu=\nu_{r}$ is defined by Eq.
\eqref{permcobo} for $r\in B\otimes B$, then $(\g\otimes B, \ast, \vartheta)=
(\g\otimes B, \ast, \vartheta_{\widetilde{r}})$ as Leibniz bialgebras, where
$\vartheta_{\widetilde{r}}$ is defined by Eq. \eqref{cobLeb} and
$\widetilde{r}$ is defined by Eq. \eqref{r-max}. Therefore, we obtain
\begin{enumerate}\itemsep=0pt
\item[$(i)$]  $(\g\otimes B, \ast, \vartheta)$ is coboundary if
     $(B, \diamond, \nu)$ is coboundary;
\item[$(ii)$] $(\g\otimes B, \ast, \vartheta)$ is quasi-triangular if
     $(B, \diamond, \nu)$ is quasi-triangular;
\item[$(iii)$] $(\g\otimes B, \ast, \vartheta)$ is triangular if
     $(B, \diamond, \nu)$ is triangular;
\item[$(iv)$] $(\g\otimes B, \ast, \vartheta)$ is factorizable if
     $(B, \diamond, \nu)$ is factorizable.
\end{enumerate}
\end{thm}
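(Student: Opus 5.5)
The core identity to establish is $\vartheta=\vartheta_{\widetilde{r}}$ on $\g\otimes B$. Items (i)–(iv) then follow almost formally from Proposition \ref{pro:PYBE-LYBE}, Proposition \ref{pro:sLib-bia} and Definition \ref{def:fact-Leib}. Since both sides are linear maps into $(\g\otimes B)^{\otimes 2}$, it suffices to compare them on pure tensors $g\otimes b$.

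\textbf{Step 1: expand $\vartheta_{\widetilde r}$.} I will write
$$
\vartheta_{\widetilde r}(g\otimes b)=\big((\fl+\fr)(g\otimes b)\otimes\id-\id\otimes\fr(g\otimes b)\big)\sum_{i,j}(e_j\otimes x_i)\otimes(f_j\otimes y_i)
$$
as a sum of three terms of the form $(\text{element of }\g\otimes\g)\bullet(\text{element of }B\otimes B)$:
$$
([g,e_j]\otimes f_j)\bullet((b\diamond x_i)\otimes y_i)+([e_j,g]\otimes f_j)\bullet((x_i\diamond b)\otimes y_i)-(e_j\otimes[f_j,g])\bullet(x_i\otimes(y_i\diamond b)).
$$

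\textbf{Step 2: reduce the $\g$-factors to $\delta_\omega(g)$.} I will use the identities already derived in the proof of Proposition \ref{pro:PYBE-LYBE}:
$$
\sum_j[g,e_j]\otimes f_j=-\sum_j[e_j,g]\otimes f_j=\sum_j e_j\otimes[f_j,g].
$$
It remains to identify this common tensor with $\pm\delta_\omega(g)$. Pairing with $e_s\otimes e_t$ gives
$$
\omega\Big(\sum_j[g,e_j]\otimes f_j,\ e_s\otimes e_t\Big)=\omega(g,[e_t,e_s])=-\omega(\delta_\omega(g),e_s\otimes e_t).
$$
Hence $\sum_j[g,e_j]\otimes f_j=-\delta_\omega(g)$. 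The three terms then combine to
$$
\vartheta_{\widetilde r}(g\otimes b)=-\delta_\omega(g)\bullet\Big(\sum_i (b\diamond x_i)\otimes y_i-(x_i\diamond b)\otimes y_i-x_i\otimes(y_i\diamond b)\Big)=\delta_\omega(g)\bullet\big((\ffr_B-\ffl_B)(b)\otimes\id+\id\otimes\ffr_B(b)\big)(r),
$$
which is exactly $\delta_\omega(g)\bullet\nu_r(b)=\vartheta(g\otimes b)$.

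\textbf{Main obstacle.} The only delicate point is the sign bookkeeping in Step 2. The sign produced by the coadjoint pairing must match the sign convention in \eqref{permcobo}. I will check it carefully, using the symmetry and invariance of $\omega$ together with $\tau\delta_\omega=-\delta_\omega$. If a global sign mismatch appeared, it would contradict Theorem \ref{thm:permbia-Lbi} only up to replacing $\widetilde r$ by $-\widetilde r$, but I expect the computation above to give equality on the nose.

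\textbf{Step 3: deduce (i)–(iv).}
\begin{enumerate}
\item[(i)] This is immediate from $\vartheta=\vartheta_{\widetilde r}$.
\item[(ii), (iii)] If $r$ is a solution of the $\PYBE$ with $r-\tau(r)$ invariant (resp.\ $r$ symmetric), Proposition \ref{pro:PYBE-LYBE} gives the corresponding property for $\widetilde r$. Proposition \ref{pro:sLib-bia} then shows the bialgebra is quasi-triangular (resp.\ triangular).
\item[(iv)] Identify $(\g\otimes B)^\ast\cong\g^\ast\otimes B^\ast$. Then $\widetilde r^\sharp=\kappa^\sharp\otimes r^\sharp$, where $\kappa=\sum_j e_j\otimes f_j$ is symmetric and $\kappa^\sharp:\g^\ast\to\g$ is the isomorphism induced by the nondegenerate $\omega$. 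Likewise $\tau(\widetilde r)^\sharp=\kappa^\sharp\otimes\tau(r)^\sharp$. Hence $\widetilde{\mathcal I}=\kappa^\sharp\otimes\mathcal I$, which is a tensor product of linear isomorphisms and therefore an isomorphism. Together with (ii), this shows the bialgebra is factorizable.
\end{enumerate}
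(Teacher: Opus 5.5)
Your proposal is correct and follows essentially the same route as the paper: expand $\vartheta_{\widetilde{r}}(g\otimes b)$, then use nondegeneracy of $\omega$ to identify $\sum_j[g,e_j]\otimes f_j=-\sum_j[e_j,g]\otimes f_j=\sum_j e_j\otimes[f_j,g]=-\delta_{\omega}(g)$, so that $\vartheta_{\widetilde{r}}(g\otimes b)=\delta_{\omega}(g)\bullet\nu_r(b)$; (ii)--(iii) then follow from Proposition \ref{pro:PYBE-LYBE}, and (iv) from $\widetilde{r}^{\sharp}=\kappa^{\sharp}\otimes r^{\sharp}$ and $\tau(\widetilde{r})^{\sharp}=\kappa^{\sharp}\otimes\tau(r)^{\sharp}$. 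Your sign bookkeeping in Step 2 is right, so the equality holds exactly and the hedge about a possible global sign is unnecessary.
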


\begin{proof}
Let $r=\sum_{i}x_{i}\otimes y_{i}\in B\otimes B$. First, for any $g\in\g$ and $b\in B$, we have
$$
\vartheta(g\otimes b)=\Big(\sum_{(g)}g_{(1)}\otimes g_{(2)}\Big)\bullet\Big(\sum_{i}
\big(x_{i}\otimes(y_{i}\diamond b)+(x_{i}\diamond b)\otimes y_{i}
-(b\diamond x_{i})\otimes y_{i}\big)\Big),
$$
where $\delta_{\omega}(g)=\sum_{(g)}g_{(1)}\otimes g_{(2)}$ and $\nu_{r}(b)=
(\id\otimes\ffr_{B}(b)+(\ffr_{B}-\ffl_{B})(b)\otimes\id)(r)=\sum_{i}\big(x_{i}
\otimes(y_{i}\diamond b)+(x_{i}\diamond b)\otimes y_{i}-(b\diamond x_{i})\otimes
y_{i}\big)$. On the other hand,
\begin{align*}
\vartheta_{\widetilde{r}}(g\otimes b)&=\big((\fl_{\g\otimes B}+\fr_{\g\otimes B})
(g\otimes b)\otimes\id)-\id\otimes\fr_{\g\otimes B}(g\otimes b)\big)(\widetilde{r})\\
&=\sum_{i,j}\Big(\big([g, e_{j}]\otimes f_{j}\big)\bullet\big((b\diamond x_{i})\otimes
y_{i}-(x_{i}\diamond b)\otimes y_{i}\big)-\big(e_{j}\otimes[f_{j}, g]\big)\bullet
\big(x_{i}\otimes(y_{i}\diamond b)\big)\Big),
\end{align*}
where $\{e_{1}, e_{2},\cdots, e_{n}\}$ is a basis of $\g$ and $\{f_{1}, f_{2},\cdots,
f_{n}\}$ is the dual basis of $\{e_{1}, e_{2}, \cdots, e_{n}\}$ with respect to $\omega(-,-)$.
For any basis elements $e_{s}, e_{t}\in\g$, since
\begin{align*}
&\qquad\qquad\qquad\omega\Big(\sum_{(g)}g_{(1)}\otimes g_{(2)},\ \
e_{s}\otimes e_{t}\Big)=\omega(g,\; [e_{s}, e_{t}]),\\[-2mm]
& \omega\Big(\sum_{j}[g, e_{j}]\otimes f_{j},\ \ e_{s}\otimes e_{t}\Big)
=\omega(g,\; [e_{t}, e_{s}])=\omega\Big(\sum_{j}e_{j}\otimes[f_{j}, g],\ \
e_{s}\otimes e_{t}\Big),
\end{align*}
we get $\sum_{(g)}g_{(1)}\otimes g_{(2)}=-\sum_{j}[g, e_{j}]\otimes f_{j}=-
\sum_{j}e_{j}\otimes[f_{j}, g]$ since $\omega(-,-)$ is nondegenerate.
Thus, $\vartheta_{\widetilde{r}}(g\otimes b)=\vartheta(g\otimes b)$ for any
$g\in\g$ and $b\in B$, and so that $(\g\otimes B, \ast, \vartheta)=(\g\otimes B,
\ast, \vartheta_{\widetilde{r}})$ as Leibniz bialgebras.

Second, $(ii)$ and $(iii)$ follow from Proposition \ref{pro:PYBE-LYBE}.
Finally, suppose $(B, \diamond, \nu_{r})$ is factorizable. We get $\mathcal{I}=
r^{\sharp}-\tau(r)^{\sharp}: B^{\ast}\rightarrow B$ is an isomorphism of
vector spaces. We need to show that $\widetilde{\mathcal{I}}:=\widetilde{r}^{\sharp}
-\tau(\widetilde{r})^{\sharp}: (\g\otimes B)^{\ast}\rightarrow \g\otimes B$ is an
isomorphism of vector spaces. Denote $\kappa:=\sum_{j}e_{j}\otimes
f_{j}\in\g\otimes\g$. Define $\kappa^{\sharp}: \g^{\ast}\rightarrow\g$ by
$\langle\kappa^{\sharp}(\xi_{1}),\; \xi_{2}\rangle=\langle\xi_{1}
\otimes\xi_{2},\; \kappa\rangle$, for any $\xi_{1}, \xi_{2}\in\g^{\ast}$.
Then, one can check that $\kappa^{\sharp}$ is a linear isomorphism and
$\langle\kappa^{\sharp}(\xi_{1}),\; \xi_{2}\rangle=\langle\xi_{1}\otimes\xi_{2},\;
\kappa\rangle=\langle\xi_{2}\otimes\xi_{1},\; \kappa\rangle
=\langle\kappa^{\sharp}(\xi_{2}),\; \xi_{1}\rangle$.
Therefore, for any $\xi_{1}, \xi_{2}\in\g^{\ast}$ and $\eta_{1}, \eta_{2}\in B^{\ast}$,
\begin{align*}
\langle\widetilde{r}^{\sharp}(\xi_{1}\otimes\eta_{1}),\; \xi_{2}\otimes\eta_{2}\rangle
&=\sum_{i,j}\langle(\xi_{1}\otimes\eta_{1})\otimes(\xi_{2}\otimes\eta_{2}),\ \
(e_{j}\otimes x_{i})\otimes(f_{j}\otimes y_{i})\rangle\\[-2mm]
&=\Big(\sum_{i}\langle\xi_{1}, e_{j}\rangle\langle\xi_{2}, f_{j}\rangle\Big)
\Big(\sum_{j}\langle\eta_{1}, x_{i}\rangle\langle\eta_{2}, y_{i}\rangle\Big)\\[-2mm]
&=\langle\kappa^{\sharp}(\xi_{1}),\; \xi_{2}\rangle
\langle r^{\sharp}(\eta_{1}),\; \eta_{2}\rangle\\
&=\langle\kappa^{\sharp}(\xi_{1})\otimes r^{\sharp}(\eta_{1}),\; \xi_{2}\otimes\eta_{2}\rangle.
\end{align*}
That is, $\widetilde{r}^{\sharp}=\kappa^{\sharp}\otimes r^{\sharp}$, Similarly,
$\tau(\widetilde{r})^{\sharp}=\kappa^{\sharp}\otimes\tau(r)^{\sharp}$. Thus,
$\widetilde{\mathcal{I}}=\kappa^{\sharp}\otimes\mathcal{I}$ is an isomorphism of
vector spaces. The proof is completed.
\end{proof}

Let $(B, \diamond)$ be a perm algebra, $r\in B\otimes B$ and $(\g, [-,-], \omega)$ be a
quadratic Lie algebra. By the proof of Theorem \ref{thm:indu-sLbia},
we have following commutative diagram:
$$
\xymatrix@C=3cm@R=0.6cm{
\txt{$r$ \\ {\tiny a solution of the $\PYBE$ in $(B, \diamond)$}\\
{\tiny such that $r-\tau(r)$ is $(\ffl_{B}, \ffr_{B})$-invariant}}
\ar[d]_{{\rm Pro.}~\ref{pro:PYBE-LYBE}}\ar[r]^{{\rm Pro.}~\ref{pro:perm-bia}} &
\txt{$(B, \diamond, \vartheta_{r})$ \\ {\tiny a quasi-triangular perm bialgebra}}
\ar[d]^{{\rm Thm.}~\ref{thm:indu-sLbia}} \\
\txt{$\widetilde{r}$ \\ {\tiny a solution of the $\LYBE$ in $(\g\otimes B, \ast)$}\\
{\tiny such that $\widetilde{r}-\tau(\widetilde{r})$ is Leib-invariant}}
\ar[r]^{{\rm Pro.}~\ref{pro:lie-bia}} &
\txt{$(\g\otimes B, \ast, \vartheta)=(\g\otimes B, \ast, \vartheta_{\widetilde{r}})$ \\
{\tiny the induced quasi-triangular Leibniz bialgebra}}}
$$
In particular, for a symmetric solution $r$ of the $\PYBE$ in $(B, \diamond)$, we get
$\widetilde{r}$ is a symmetric solution of the $\LYBE$ in $(\g\otimes B, \ast)$,
$(\g\otimes B, \ast, \vartheta)=(\g\otimes B, \ast, \vartheta_{\widetilde{r}})$
as triangular Leibniz bialgebras, and the diagram is also commutative.

In \cite{Kup}, Kupershmidt found that the $\CYBE$ in tensor form on Lie algebras can
be converted into an $\mathcal{O}$-operator associated to the coadjoint representation.
This conclusion has been confirmed on various types of algebras.
Let $(A, \ast)$ be a Leibniz algebra and $(V, \kl, \kr)$ be a representation of
$(A, \ast)$. Recall that a linear map $T: V\rightarrow A$ is called an {\bf
$\mathcal{O}$-operator of $(A, \ast)$ associated to $(V, \kl, \kr)$} if for
any $v_{1}, v_{2}\in V$,
$$
T(v_{1})\ast T(v_{2})=T\big(\kl(T(v_{1}))(v_{2})+\kr(T(v_{2}))(v_{1})\big).
$$

\begin{pro}[\cite{TS,BLST}]\label{pro:o-leib}
Let $(A, \ast)$ be a Leibniz algebra, $r\in A\otimes A$ be symmetric.
Then $r$ is a solution of the $\LYBE$ in $(A, \ast)$ if and only if
$r^{\sharp}: A^{\ast}\rightarrow A$ is an $\mathcal{O}$-operator of $(A, \ast)$
associated to the coregular representation $(A^{\ast}, \fl_{A}^{\ast}, -\fl_{A}^{\ast}-\fr_{A}^{\ast})$.
\end{pro}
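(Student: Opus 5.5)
I will prove both directions by a single identity: pair each side of the $\mathcal{O}$-operator equation with an arbitrary $\xi_{3}\in A^{\ast}$, and identify the defect with $\mathbf{L}_{r}$ evaluated on $\xi_{1}\otimes\xi_{2}\otimes\xi_{3}$. Write $r=\sum_{i}x_{i}\otimes y_{i}$. By symmetry of $r$ we have $\tau(r)=r$, so $r^{\sharp}(\xi)=\sum_{i}\langle\xi,x_{i}\rangle y_{i}=\sum_{i}\langle\xi,y_{i}\rangle x_{i}$. This lets me move freely between the two legs of $r$, and it is the only place symmetry enters.

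First, I compute the left side:
$$
\langle r^{\sharp}(\xi_{1})\ast r^{\sharp}(\xi_{2}),\;\xi_{3}\rangle=\sum_{i,j}\langle\xi_{1},x_{i}\rangle\langle\xi_{2},x_{j}\rangle\langle\xi_{3},y_{i}\ast y_{j}\rangle .
$$
This is $\langle\xi_{1}\otimes\xi_{2}\otimes\xi_{3},\,r_{13}\ast r_{23}\rangle$ in the obvious sense. Using symmetry, it equals the pairing of $\xi_{3}\otimes\xi_{1}\otimes\xi_{2}$ (suitably permuted) with $r_{23}\ast r_{13}=\sum x_{j}\otimes x_{i}\otimes y_{i}y_{j}$.

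Second, I compute the right side. Its first term is
$$
\langle r^{\sharp}(\fl_{A}^{\ast}(r^{\sharp}\xi_{1})\xi_{2}),\xi_{3}\rangle=\langle\fl_{A}^{\ast}(r^{\sharp}\xi_{1})\xi_{2},\,r^{\sharp}\xi_{3}\rangle=-\langle\xi_{2},\,r^{\sharp}\xi_{1}\ast r^{\sharp}\xi_{3}\rangle .
$$
Its second term is
$$
\langle(-\fl_{A}^{\ast}-\fr_{A}^{\ast})(r^{\sharp}\xi_{2})\xi_{1},\,r^{\sharp}\xi_{3}\rangle=\langle\xi_{1},\,r^{\sharp}\xi_{2}\ast r^{\sharp}\xi_{3}+r^{\sharp}\xi_{3}\ast r^{\sharp}\xi_{2}\rangle .
$$
Each of these is a product of two legs of $r$ contracted against the $\xi$'s. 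With symmetry, each matches one of the four terms $r_{12}\ast r_{13}$, $r_{12}\ast r_{23}$, $r_{23}\ast r_{12}$, $r_{23}\ast r_{13}$, evaluated on $\xi_{a}\otimes\xi_{b}\otimes\xi_{c}$ for a fixed permutation $(a,b,c)$ of $(1,2,3)$.

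Third, I collect the terms. The defect $\langle\text{LHS}-\text{RHS},\xi_{3}\rangle$ should equal, up to a global sign, $\langle\xi_{3}\otimes\xi_{1}\otimes\xi_{2},\mathbf{L}_{r}\rangle$ or a similar fixed permutation. Concretely, $\langle\xi_{1},r^{\sharp}\xi_{2}\ast r^{\sharp}\xi_{3}\rangle$ corresponds to the product landing in the slot of $\xi_{1}$, which is $r_{12}\ast r_{13}$-type after relabeling. Likewise, $\langle\xi_{2},r^{\sharp}\xi_{1}\ast r^{\sharp}\xi_{3}\rangle$ is the $r_{12}\ast r_{23}$ or $r_{23}\ast r_{12}$-type term, according to the order of factors. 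The signs $+,-,-,+$ of $\mathbf{L}_{r}$ should match the signs produced by the dual maps. Since $\xi_{1},\xi_{2},\xi_{3}$ are arbitrary and the pairing is nondegenerate, $\mathbf{L}_{r}=0$ if and only if $r^{\sharp}$ satisfies the $\mathcal{O}$-operator identity for all $\xi_{1},\xi_{2}$.

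The main obstacle is the bookkeeping in the third step: choosing the correct permutation of $(\xi_{1},\xi_{2},\xi_{3})$ and checking that the sign conventions in $\fl_{A}^{\ast},\fr_{A}^{\ast}$ and the order of the Leibniz products line up exactly with the four terms of $\mathbf{L}_{r}$. I would first test the assignment $\xi_{3}\mapsto$ first leg, and switch to another permutation if the signs fail. Because the Leibniz product is not symmetric, the ordering $y_{i}x_{j}$ versus $x_{i}y_{j}$ must be tracked carefully. Symmetry of $r$ is what allows any leg to be rewritten as an $x$ or a $y$ as needed.
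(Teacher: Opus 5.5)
Your strategy is the standard one, and your first two steps are correct. The paper gives no proof of this proposition; it defers to the cited references, where this pairing computation is the argument.

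The gap is the third step, which is the whole content of the proposition. You only assert that the defect ``should'' equal a component of $\mathbf{L}_{r}$, and the correspondences you do state are wrong.

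Write $a=r^{\sharp}(\xi_{1})$, $b=r^{\sharp}(\xi_{2})$, $c=r^{\sharp}(\xi_{3})$. Your first two steps give the defect
\[
D=\langle\xi_{3},a\ast b\rangle+\langle\xi_{2},a\ast c\rangle-\langle\xi_{1},b\ast c\rangle-\langle\xi_{1},c\ast b\rangle .
\]
In $\mathbf{L}_{r}$, the only two terms whose product sits in the same tensor slot are $-r_{12}\ast r_{23}$ and $-r_{23}\ast r_{12}$, both in the middle slot. They must absorb the pair $-\langle\xi_{1},b\ast c+c\ast b\rangle$ coming from $\kr=-\fl_{A}^{\ast}-\fr_{A}^{\ast}$, so $\xi_{1}$ is forced into the middle slot.

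Your claim that $\langle\xi_{1},b\ast c\rangle$ is of $r_{12}\ast r_{13}$-type would put $\xi_{1}$ in the first slot instead. There $\mathbf{L}_{r}$ has only one term to match the two terms of $D$ paired against $\xi_{1}$, and this also contradicts your own proposed permutation $\xi_{3}\otimes\xi_{1}\otimes\xi_{2}$. Similarly, $\langle\xi_{2},a\ast c\rangle$ is an outer-slot term, not of $r_{12}\ast r_{23}$- or $r_{23}\ast r_{12}$-type.

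The correct bookkeeping uses $\sum_{i}\langle\eta,x_{i}\rangle y_{i}=\sum_{i}\langle\eta,y_{i}\rangle x_{i}=r^{\sharp}(\eta)$. With $\Xi:=\xi_{3}\otimes\xi_{1}\otimes\xi_{2}$ it gives
\begin{align*}
\langle\Xi,\ r_{12}\ast r_{13}\rangle&=\langle\xi_{3},a\ast b\rangle, &
\langle\Xi,\ r_{12}\ast r_{23}\rangle&=\langle\xi_{1},c\ast b\rangle,\\
\langle\Xi,\ r_{23}\ast r_{12}\rangle&=\langle\xi_{1},b\ast c\rangle, &
\langle\Xi,\ r_{23}\ast r_{13}\rangle&=\langle\xi_{2},a\ast c\rangle,
\end{align*}
so $D=\langle\Xi,\mathbf{L}_{r}\rangle$ with global sign $+1$. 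Your guessed permutation is in fact right, though not for the reasons you give. For symmetric $r$, $\mathbf{L}_{r}$ is invariant under exchanging the first and third tensor factors, so $\xi_{2}\otimes\xi_{1}\otimes\xi_{3}$ works as well.

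Nondegeneracy of the pairing between $(A^{\ast})^{\otimes 3}$ and $A^{\otimes 3}$ then yields both implications at once. With these four identities written in, your argument is a complete proof. As submitted, the decisive verification is missing and the partial matching you offer points the wrong way.
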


The $\mathcal{O}$-operator of a perm algebra was considered in \cite{LZB}.
Recall that an {\bf $\mathcal{O}$-operator of a perm algebra $(B, \diamond)$
associated to a bimodule $(V, \kl, \kr)$} is a linear map $T: V\rightarrow B$ such that
$$
T(v_{1})\diamond T(v_{2})=T\big(\kl(T(v_{1}))(v_{2})+\kr(T(v_{2}))(v_{1})\big),
$$
for any $v_{1}, v_{2}\in V$.

\begin{pro}[\cite{LZB}]\label{pro:o-perm}
Let $(B, \diamond)$ be a perm algebra, $r\in B\otimes B$ be symmetric.
Then $r$ is a solution of the $\PYBE$ in $(B, \diamond)$ if and only if $r^{\sharp}$
is an $\mathcal{O}$-operator of $(B, \diamond)$ associated to the coregular
bimodule $(B^{\ast}, -\ffl_{B}^{\ast}, \ffr_{B}^{\ast}-\ffl_{B}^{\ast})$.
\end{pro}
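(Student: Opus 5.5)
The plan is to follow Kupershmidt's original argument: rewrite both the $\PYBE$ and the $\mathcal{O}$-operator identity as statements about trilinear forms on $B^{\ast}$, and compare them term by term. Write $r=\sum_{i}x_{i}\otimes y_{i}$ with $r=\tau(r)$, so that $r^{\sharp}(\xi)=\sum_{i}\langle\xi,x_{i}\rangle y_{i}=\sum_{i}\langle\xi,y_{i}\rangle x_{i}$. For $\xi_{1},\xi_{2},\xi_{3}\in B^{\ast}$ I will evaluate both sides of the $\mathcal{O}$-operator identity
$$
r^{\sharp}(\xi_{1})\diamond r^{\sharp}(\xi_{2})=r^{\sharp}\big(-\ffl_{B}^{\ast}(r^{\sharp}(\xi_{1}))(\xi_{2})+(\ffr_{B}^{\ast}-\ffl_{B}^{\ast})(r^{\sharp}(\xi_{2}))(\xi_{1})\big)
$$
against $\xi_{3}$. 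I will also evaluate $\mathbf{P}_{r}$ against $\xi_{1}\otimes\xi_{2}\otimes\xi_{3}$, possibly after a permutation of the tensor slots.

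First I compute the left side. We have $\langle r^{\sharp}(\xi_{1})\diamond r^{\sharp}(\xi_{2}),\xi_{3}\rangle=\sum_{i,j}\langle\xi_{1},x_{i}\rangle\langle\xi_{2},x_{j}\rangle\langle\xi_{3},y_{i}\diamond y_{j}\rangle$. This is exactly the pairing of $r_{13}\diamond r_{23}$ with $\xi_{1}\otimes\xi_{2}\otimes\xi_{3}$.

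Next I compute the right side. I move $r^{\sharp}$ across the pairing using symmetry: $\langle r^{\sharp}(\eta),\xi_{3}\rangle=\langle\eta,r^{\sharp}(\xi_{3})\rangle$. I then use the definitions of $\ffl_{B}^{\ast}$ and $\ffr_{B}^{\ast}$, namely $\langle\ffl_{B}^{\ast}(b)\xi,c\rangle=-\langle\xi,b\diamond c\rangle$ and similarly for $\ffr_{B}^{\ast}$. Each of the three resulting terms becomes a double sum $\sum_{i,j}$ of products of three pairings:
\begin{itemize}
\item $-\ffl_{B}^{\ast}$ gives $\langle\xi_{2},r^{\sharp}(\xi_{1})\diamond r^{\sharp}(\xi_{3})\rangle$;
\item $\ffr_{B}^{\ast}$ gives $-\langle\xi_{1},r^{\sharp}(\xi_{3})\diamond r^{\sharp}(\xi_{2})\rangle$;
\item $-\ffl_{B}^{\ast}$ gives $\langle\xi_{1},r^{\sharp}(\xi_{2})\diamond r^{\sharp}(\xi_{3})\rangle$.
\end{itemize}
Expanding $r^{\sharp}$ via $r=\sum x_{i}\otimes y_{i}$ and using symmetry to choose which leg carries the product, I will match these terms with the pairings of $r_{12}\diamond r_{13}$, $r_{13}\diamond r_{12}$ and $r_{12}\diamond r_{23}$ against a suitably permuted $\xi_{\sigma(1)}\otimes\xi_{\sigma(2)}\otimes\xi_{\sigma(3)}$. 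The difference (left minus right) should then equal $\pm\langle\xi_{\sigma(1)}\otimes\xi_{\sigma(2)}\otimes\xi_{\sigma(3)},\mathbf{P}_{r}\rangle$ for a single fixed permutation $\sigma$.

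Finally, since the identity must hold for all $\xi_{1},\xi_{2},\xi_{3}$ and the pairing of $(B^{\ast})^{\otimes3}$ with $B^{\otimes 3}$ is nondegenerate (finite dimension), $r^{\sharp}$ is an $\mathcal{O}$-operator if and only if $\mathbf{P}_{r}=0$. This gives both directions at once.

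The main obstacle is the bookkeeping in the matching step. I have to choose the permutation $\sigma$ consistently and use symmetry of $r$ to rewrite, for example, $\sum_{i,j}(x_{i}\diamond x_{j})\otimes y_{j}\otimes y_{i}$ in the slot order needed, so that all four terms of $\mathbf{P}_{r}$ appear with the right signs. I would first try $\sigma$ making $\xi_{3}$ the first slot, since the products in $r_{12}\diamond r_{13}$ and $r_{13}\diamond r_{12}$ sit in the first leg. I would then check the signs against the perm-algebra convention for $(B^{\ast},-\ffl_{B}^{\ast},\ffr_{B}^{\ast}-\ffl_{B}^{\ast})$ being a bimodule.
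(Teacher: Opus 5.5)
Your plan is correct and is the standard Kupershmidt-type pairing argument behind the result quoted from \cite{LZB}; the paper gives no proof of its own. The bookkeeping closes with no permutation at all: against $\xi_{1}\otimes\xi_{2}\otimes\xi_{3}$ the left side pairs with $r_{13}\diamond r_{23}$, and your three right-hand terms pair with $r_{12}\diamond r_{23}$ (using $\tau(r)=r$ in the second copy of $r$), $-r_{13}\diamond r_{12}$ and $r_{12}\diamond r_{13}$ respectively, so left minus right is exactly $-\langle\xi_{1}\otimes\xi_{2}\otimes\xi_{3},\mathbf{P}_{r}\rangle$ and putting $\xi_{3}$ in the first slot is not needed.
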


Let $(B, \diamond)$ be a perm algebra, $r\in B\otimes B$ and $(\g, [-,-], \omega)$ be a
quadratic Lie algebra. By the proof of Theorem \ref{thm:indu-sLbia}, we also have
the following commutative diagram:
$$
\xymatrix@C=3cm@R=0.5cm{
\txt{$r$ \\ {\tiny a symmetric solution} \\ {\tiny of the $\PYBE$ in $(B, \diamond)$}}
\ar[d]_-{{\rm Pro.}~\ref{pro:PYBE-LYBE}}\ar[r]^-{{\rm Pro.}~\ref{pro:o-perm}} &
\txt{$r^{\sharp}$\\ {\tiny an $\mathcal{O}$-operator of $(B, \diamond)$} \\
{\tiny associated to $(B^{\ast}, -\ffl_{B}^{\ast}, \ffr_{B}^{\ast}-\ffl_{B}^{\ast})$}}
\ar[d]^-{\mbox{$\kappa^{\sharp}\otimes-$}} \\
\txt{$\widetilde{r}$ \\ {\tiny a symmetric solution} \\ {\tiny of the $\LYBE$ in
$(\g\otimes B, \ast)$}} \ar[r]^-{{\rm Pro.}~\ref{pro:o-leib}}
& \txt{$\widetilde{r}^{\sharp}=\kappa^{\sharp}\otimes r^{\sharp}$ \\
{\tiny an $\mathcal{O}$-operator of $(\g\otimes B, \ast)$ } \\
{\tiny associated to $((\g\otimes B)^{\ast}, \fl_{\g\otimes B}^{\ast},
-\fl_{\g\otimes B}^{\ast}-\fr_{\g\otimes B}^{\ast})$}}}
$$

At the end of this section, we give a simple example.

\begin{ex}\label{ex:ind-tri}
Let $(B=\Bbbk\{e_{1}, e_{2}\}, \diamond, \nu_{r})$ be the 2-dimensional perm bialgebra
given in Example \ref{ex:tri-permbi}, i.e., $e_{2}\diamond e_{1}=e_{1}$,
$e_{2}\diamond e_{2}=e_{2}$, $\nu_{r}(e_{1})=0$ and $\nu_{r}(e_{2})=-e_{1}\otimes e_{1}$,
and let $(\mathfrak{sl}_{2}, [-,-], \omega)$ be the quadratic Lie algebra
defined in Example \ref{ex:colib}. By Theorem \ref{thm:permbia-Lbi}, we obtain a
Leibniz bialgebra $(\mathfrak{sl}_{2}\otimes B, \ast, \vartheta)$, where
\begin{align*}
&(x\otimes e_{2})\ast(y\otimes e_{1})=h\otimes e_{1}=-(y\otimes e_{2})\ast(x\otimes e_{1}),\\
&(x\otimes e_{2})\ast(y\otimes e_{2})=h\otimes e_{2}=-(y\otimes e_{2})\ast(x\otimes e_{2}),\\
&(h\otimes e_{2})\ast(x\otimes e_{1})=2x\otimes e_{1}=-(x\otimes e_{2})\ast(h\otimes e_{1}),\\
&(h\otimes e_{2})\ast(x\otimes e_{2})=2x\otimes e_{2}=-(x\otimes e_{2})\ast(h\otimes e_{2}),\\
&(y\otimes e_{2})\ast(h\otimes e_{1})=2y\otimes e_{1}=-(h\otimes e_{2})\ast(y\otimes e_{1}),\\
&(y\otimes e_{2})\ast(h\otimes e_{2})=2y\otimes e_{2}=-(h\otimes e_{2})\ast(y\otimes e_{2}),\\
&\vartheta(x\otimes e_{2})=(h\otimes e_{1})\otimes(x\otimes e_{1})
-(x\otimes e_{1})\otimes(h\otimes e_{1}),\\
&\vartheta(y\otimes e_{2})=(y\otimes e_{1})\otimes(h\otimes e_{1})
-(h\otimes e_{1})\otimes(y\otimes e_{1}),\\
&\vartheta(h\otimes e_{2})=2(x\otimes e_{1})\otimes(y\otimes e_{1})
-2(y\otimes e_{1})\otimes(x\otimes e_{1}),
\end{align*}
and others are all zero. The symmetric solution $r=e_{1}\otimes e_{1}$ of the $\PYBE$
in $(B, \diamond)$ induces an $\mathcal{O}$-operator $r^{\sharp}: B^{\ast}\rightarrow
B$, which is given by $r^{\sharp}(\xi_{1})=e_{1}$ and $r^{\sharp}(\xi_{2})=0$,
where $\xi_{1}, \xi_{2}\in B^{\ast}$ is the dual basis of $e_{1}, e_{2}$.
In the quadratic Lie algebra $(\mathfrak{sl}_{2}, [-,-], \omega)$, note that
$\{y, x, \tfrac{1}{2}h\}$ is the dual basis of $\{x, y, h\}$ with respect to $\omega(-,-)$.
Denote $\kappa:=x\otimes y+y\otimes x+\tfrac{1}{2}h\otimes h$ and by $\{\xi_{x},
\xi_{y}, \xi_{h}\}$ the dual basis in $\mathfrak{sl}_{2}^{\ast}$ of $\{x, y, h\}$.
Define $\kappa^{\sharp}: \mathfrak{sl}_{2}^{\ast}\rightarrow\mathfrak{sl}_{2}$ by
$\langle\kappa^{\sharp}(\eta_{1}),\; \eta_{2}\rangle=\langle\eta_{1}\otimes\eta_{2},\;
\kappa\rangle$ for any $\eta_{1}, \eta_{2}\in\mathfrak{sl}_{2}^{\ast}$. Then we get
a linear map $\kappa^{\sharp}\otimes r^{\sharp}: (\mathfrak{sl}_{2}\otimes B)^{\ast}
\rightarrow\mathfrak{sl}_{2}\otimes B$:
$$
(\kappa^{\sharp}\otimes r^{\sharp})(\xi_{x}\otimes\xi_{1})=y\otimes e_{1},\qquad
(\kappa^{\sharp}\otimes r^{\sharp})(\xi_{y}\otimes\xi_{1})=x\otimes e_{1},\qquad
(\kappa^{\sharp}\otimes r^{\sharp})(\xi_{h}\otimes\xi_{1})=\tfrac{1}{2}h\otimes e_{1},
$$
and $(\kappa^{\sharp}\otimes r^{\sharp})(\xi_{x}\otimes\xi_{2})=
(\kappa^{\sharp}\otimes r^{\sharp})(\xi_{y}\otimes\xi_{2})=
(\kappa^{\sharp}\otimes r^{\sharp})(\xi_{h}\otimes\xi_{2})=0$.
One can check that $\kappa^{\sharp}\otimes r^{\sharp}$ is an $\mathcal{O}$-operator
of $(\g\otimes B, \ast)$ associated to the coregular representation $((\g\otimes B)^{\ast},
\fl_{\g\otimes B}^{\ast}, -\fl_{\g\otimes B}^{\ast}-\fr_{\g\otimes B}^{\ast})$.

On the other hand, one can check that
$$
\widetilde{r}=(x\otimes e_{1})\otimes(y\otimes e_{1})
+(y\otimes e_{1})\otimes(x\otimes e_{1})+\tfrac{1}{2}(h\otimes e_{1})\otimes(h\otimes e_{1})
$$
is a symmetric solution of the $\LYBE$ in $(\g\otimes B, \ast)$. By direct calculation,
we obtain $\vartheta_{\widetilde{r}}=\vartheta$,
$$
\widetilde{r}^{\sharp}(\xi_{x}\otimes\xi_{1})=y\otimes e_{1},\qquad
\widetilde{r}^{\sharp}(\xi_{y}\otimes\xi_{1})=x\otimes e_{1},\qquad
\widetilde{r}^{\sharp}(\xi_{h}\otimes\xi_{1})=\tfrac{1}{2}h\otimes e_{1},
$$
and $\widetilde{r}^{\sharp}(\xi_{x}\otimes\xi_{2})=\widetilde{r}^{\sharp}(\xi_{y}
\otimes\xi_{2})=\widetilde{r}^{\sharp}(\xi_{h}\otimes\xi_{2})=0$. That is,
$\widetilde{r}^{\sharp}=\kappa^{\sharp}\otimes r^{\sharp}$.
\end{ex}

\smallskip
\section{Infinite-dimensional Leibniz bialgebras from Lie bialgebras} \label{sec:infi-bia}
In this section, we recall the notion of a quadratic $\bz$-graded perm algebra, as
a $\bz$-graded perm algebra equipped with an invariant bilinear form. We introduce
the notion of completed tensor product and show that the tensor product of
a finite-dimensional Lie bialgebra and a quadratic $\bz$-graded perm algebra can be
naturally endowed with an infinite-dimensional Leibniz bialgebra.

\begin{defi}\label{def:zgrad-alg}
A {\bf $\bz$-graded Leibniz algebra} (resp. {\bf $\bz$-graded perm algebra})
is a Leibniz algebra $(A, \ast)$ (resp. a perm algebra $(B, \diamond)$) with a
linear decomposition $A=\oplus_{i\in\bz}A_{i}$ (resp. $B=\oplus_{i\in\bz}B_{i}$) such that
each $A_{i}$ (resp. $B_{i}$) is finite-dimensional, $A_{i}\ast A_{j}\subseteq A_{i+j}$
(resp. $B_{i}\diamond B_{j}\subseteq B_{i+j}$) for all $i, j\in\bz$.
\end{defi}

\begin{ex}[\cite{LZB}]\label{ex:grperm}
Let $B=\{f_{1}\partial_{1}+f_{2}\partial_{2}\mid f_{1}, f_{2}\in\Bbbk[x_{1}^{\pm},
x_{2}^{\pm}]\}$ and define a binary operation $\diamond: B\otimes B\rightarrow B$ by
$$
(x_{1}^{i_{1}}x_{2}^{i_{2}}\partial_{s})\diamond(x_{1}^{j_{1}}x_{2}^{j_{2}}\partial_{t})
:=\delta_{s,1}x_{1}^{i_{1}+j_{1}+1}x_{2}^{i_{2}+j_{2}}\partial_{t}
+\delta_{s,2}x_{1}^{i_{1}+j_{1}}x_{2}^{i_{2}+j_{2}+1}\partial_{t},
$$
for any $i_{1}, i_{2}, j_{1}, j_{2}\in\bz$ and $s, t\in\{1, 2\}$.
Then $(B, \diamond)$ is a $\bz$-graded perm algebra with the linear decomposition
$B=\oplus_{i\in\bz}A_{i}$, where
$$
B_{i}=\Big\{\sum_{k=1}^{2}f_{k}\partial_{k}\mid f_{k}\text{ is a homogeneous
polynomial with }\deg(f_{k})=i-1,\; k=1,2\Big\},
$$
for all $i\in\bz$.
\end{ex}

By Proposition \ref{pro:Lie+perm}, we have the following proposition.

\begin{pro}\label{pro:aff-Lie}
Let $(\g, [-,-])$ be a finite-dimensional Lie algebra and $(B=\oplus_{i\in\bz}B_{i},
\diamond)$ be a $\bz$-graded perm algebra. Define a binary operation on $\g\otimes B$ by
$$
(g_{1}, b_{1})\ast(g_{2}, b_{2}):=[g_{1}, g_{2}]\otimes(b_{1}\diamond b_{2}),
$$
for any $g_{1}, g_{2}\in\g$ and $b_{1}, b_{2}\in B$. Then $(\g\otimes B, \ast)$ is a
$\bz$-graded Leibniz algebra, which is called the {\bf induced $\bz$-graded Leibniz
algebra from $(\g, [-,-])$ by $(B, \diamond)$}.
\end{pro}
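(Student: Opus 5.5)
The plan has two parts: first the Leibniz identity on $\g\otimes B$, then the grading. For the identity, I would not redo the computation but apply Proposition \ref{pro:Lie+perm} directly. Neither that proposition nor its proof uses finite-dimensionality of $\g$ or $B$. The proof only needs two facts: the Jacobi identity in $(\g,[-,-])$, and the perm identities $(b_{1}\diamond b_{2})\diamond b_{3}=b_{1}\diamond(b_{2}\diamond b_{3})=b_{2}\diamond(b_{1}\diamond b_{3})$. The verification takes place on pure tensors $g\otimes b$, and $\ast$ is defined bilinearly on the algebraic tensor product $\g\otimes B$. So the identity extends to all of $\g\otimes B$ by linearity, even when $B=\oplus_{i\in\bz}B_{i}$ is infinite-dimensional.

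For the grading, I would set $(\g\otimes B)_{i}:=\g\otimes B_{i}$ for every $i\in\bz$. Then three properties need checking.

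\begin{enumerate}
\item[(1)] The decomposition holds: $\g\otimes B=\oplus_{i\in\bz}\g\otimes B_{i}$. This follows because tensor product distributes over direct sums.
\item[(2)] Each piece is finite-dimensional: $\dim(\g\otimes B_{i})=\dim\g\cdot\dim B_{i}<\infty$. This is where the hypothesis that $\g$ is finite-dimensional is used.
\item[(3)] The product respects the grading: for $g_{1}\otimes b_{1}\in\g\otimes B_{i}$ and $g_{2}\otimes b_{2}\in\g\otimes B_{j}$, we have $[g_{1},g_{2}]\otimes(b_{1}\diamond b_{2})\in\g\otimes B_{i+j}$, since $B_{i}\diamond B_{j}\subseteq B_{i+j}$. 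By bilinearity this gives $(\g\otimes B_{i})\ast(\g\otimes B_{j})\subseteq\g\otimes B_{i+j}$.
\end{enumerate}

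There is no real obstacle here. The only point needing care is the remark that Proposition \ref{pro:Lie+perm} does not depend on the paper's standing finite-dimensionality convention. If one prefers not to rely on that, one can simply repeat its three-line computation verbatim for homogeneous pure tensors.
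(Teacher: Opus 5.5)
Your proposal is correct and follows the paper's proof: you invoke Proposition \ref{pro:Lie+perm} for the Leibniz identity, and you observe that $(\g\otimes B)_{i}=\g\otimes B_{i}$ respects the grading because $B_{i}\diamond B_{j}\subseteq B_{i+j}$. You also make explicit two points the paper leaves implicit: Proposition \ref{pro:Lie+perm} never uses finite-dimensionality, and the finiteness of $\dim(\g\otimes B_{i})$ is exactly where the hypothesis $\dim\g<\infty$ enters.
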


\begin{proof}
By Proposition \ref{pro:Lie+perm}, $(\g\otimes B, \ast)$ is a Leibniz algebra.
Since $(B=\oplus_{i\in\bz}B_{i}, \diamond)$ is $\bz$-graded, $(\g\otimes B, \ast)$
is a $\bz$-graded Leibniz algebra.
\end{proof}

To construct infinite-dimensional Leibniz coalgebras, we need to extend the codomain of the
comultiplication $\delta$ to allow infinite sums. Let $U=\oplus_{i\in\bz}U_{i}$ and
$V=\oplus_{j\in\bz}V_{j}$ be $\bz$-graded vector spaces.
We call the {\bf completed tensor product} of $U$ and $V$ to be the vector space
$$
U\,\hat{\otimes}\,V:=\prod_{i,j\in\bz}U_{i}\otimes V_{j}.
$$
If $U$ and $V$ are finite-dimensional, then $U\,\hat{\otimes}\,V$ is just the usual
tensor product $U\otimes V$. In general, an element in $U\,\hat{\otimes}\,V$ is an
infinite formal sum $\sum_{i,j\in\bz}X_{ij} $ with $X_{ij}\in U_{i}\otimes V_{j}$.
So $X_{ij}=\sum_{\alpha} u_{i, \alpha}\otimes v_{j, \alpha}$ for pure tensors
$u_{i, \alpha}\otimes v_{j, \alpha}\in U_{i}\otimes V_{j}$ with $\alpha$ in a finite
index set. Thus a general term of $U\,\hat{\otimes}\,V$ is a possibly infinite sum
$\sum_{i,j,\alpha}u_{i\alpha}\otimes v_{j\alpha}$, where $i, j\in\bz$ and $\alpha$ is
in a finite index set (which might depend on $i, j$). With these notations, for linear
maps $f: U\rightarrow U'$ and $g: V\rightarrow V'$, define
$$
f\,\hat{\otimes}\,g: U\,\hat{\otimes}\,V\rightarrow U'\,\hat{\otimes}\,V',
\qquad \sum_{i,j,\alpha}u_{i,\alpha}\otimes v_{j, \alpha}\mapsto
\sum_{i,j,\alpha} f(u_{i, \alpha})\otimes g(v_{j, \alpha}).
$$
The twist map $\tau$ has its completion
$\hat{\tau}: V\,\hat{\otimes}\,V\rightarrow V\,\hat{\otimes}\,V$,
$\sum_{i,j,\alpha}u_{i, \alpha}\otimes v_{j, \alpha}\mapsto
\sum_{i,j,\alpha}v_{j, \alpha}\otimes u_{i, \alpha}$.
Moreover, we define a (completed) comultiplication to be a linear map
$\delta: V\rightarrow V\,\hat{\otimes}\,V$,
$\delta(v):=\sum_{i, j, \alpha}v_{1, i, \alpha}\otimes v_{2, j, \alpha}$.
Then we have the well-defined map
$$
(\delta\,\hat{\otimes}\,\id)\delta(v)=(\delta\,\hat{\otimes}\,\id)
\Big(\sum_{i,j,\alpha}v_{1, i, \alpha}\otimes v_{2, j, \alpha}\Big)
:=\sum_{i,j,\alpha}\delta(v_{1, i, \alpha})\otimes v_{2, j, \alpha}
\in V\,\hat{\otimes}\,V\,\hat{\otimes}\,V.
$$

\begin{defi}\label{def:Pd-coa}
\begin{enumerate}
\item[$(i)$] A {\bf completed perm coalgebra} is a pair $(B, \nu)$, where
    $B=\oplus_{i\in\bz}B_{i}$ is a $\bz$-graded vector space and
    $\nu: B\rightarrow B\,\hat{\otimes}\,B$ is a linear map satisfying
    $$
    (\nu\,\hat{\otimes}\,\id)\nu=(\id\,\hat{\otimes}\,\nu)\nu
    =(\tau\,\hat{\otimes}\,\id)(\nu\,\hat{\otimes}\,\id)\nu.
    $$
\item[$(ii)$] A {\bf completed Leibniz coalgebra} is a pair $(A, \vartheta)$, where
    $A=\oplus_{i\in\bz}A_{i}$ is a $\bz$-graded vector space and
    $\vartheta: A\rightarrow A\,\hat{\otimes}\,A$ is a linear map satisfying
    $$
(\id\,\hat{\otimes}\,\vartheta)\vartheta=(\vartheta\,\hat{\otimes}\,\id)\vartheta
    +(\hat{\tau}\,\hat{\otimes}\,\id)(\id\,\hat{\otimes}\,\vartheta)\vartheta.
    $$
\end{enumerate}
\end{defi}

\begin{ex}[\cite{LZB}]\label{ex:grpermco}
Consider the $\bz$-graded vector space $B=\{f_{1}\partial_{1}+f_{2}\partial_{2}\mid
f_{1}, f_{2}\in\Bbbk[x_{1}^{\pm}, x_{2}^{\pm}]\}=\oplus_{i\in\bz}B_{i}$ given in
Example \ref{ex:grperm}. Define a linear map $\nu: B\rightarrow B\,\hat{\otimes}\,B$ by
\begin{align*}
\nu(x_{1}^{m}x_{2}^{n}\partial_{1})&=\sum_{i_{1}, i_{2}\in\bz}
\Big(x_{1}^{i_{1}}x_{2}^{i_{2}}\partial_{1}\otimes x_{1}^{m-i_{1}}
x_{2}^{n-i_{2}+1}\partial_{1} -x_{1}^{i_{1}}x_{2}^{i_{2}}\partial_{2}
\otimes x_{1}^{m-i_{1}+1}x_{2}^{n-i_{2}}\partial_{1}\Big), \\[-2mm]
\nu(x_{1}^{m}x_{2}^{n}\partial_{2})&=\sum_{i_{1}, i_{2}\in\bz}
\Big(x_{1}^{i_{1}}x_{2}^{i_{2}}\partial_{1}\otimes x_{1}^{m-i_{1}}
x_{2}^{n-i_{2}+1}\partial_{2}-x_{1}^{i_{1}}x_{2}^{i_{2}}\partial_{2}
\otimes x_{1}^{m-i_{1}+1}x_{2}^{n-i_{2}}\partial_{2}\Big),
\end{align*}
for any $m, n \in\bz$. Then $(B=\oplus_{i\in\bz}B_{i}, \nu)$ is a completed perm coalgebra.
\end{ex}

Similar to the proof of Proposition \ref{pro:perm-coLie}, we give the dual version
of Proposition \ref{pro:aff-Lie}.

\begin{pro}\label{pro:coperm-coLie}
Let $(\g, \delta)$ be a finite-dimensional Lie coalgebra and $(B=\oplus_{i\in\bz}B_{i}, \nu)$
be a completed perm coalgebra. Define a linear map $\vartheta: \g\otimes B\rightarrow
(\g\otimes B)\,\hat{\otimes}\,(\g\otimes B)$ by
$$
\vartheta(g\otimes b)=\delta(g)\bullet\nu(b)=\sum_{(g)}\sum_{i,j,\alpha}
(g_{(1)}\otimes b_{1,i,\alpha})\otimes(g_{(2)}\otimes b_{2,j,\alpha}),
$$
for any $g\in\g$ and $b\in B$, where $\delta(g)=\sum_{(p)}p_{(1)}\otimes p_{(2)}$,
in the Sweedler notation and $\nu(b)=\sum_{i,j,\alpha}b_{1,i,\alpha}\otimes b_{2,j,\alpha}$.
Then $(\g\otimes B, \vartheta)$ is a completed Leibniz coalgebra.
\end{pro}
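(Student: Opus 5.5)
We will follow the proof of Proposition \ref{pro:perm-coLie} essentially verbatim, the only new ingredient being that all tensor products in the $B$-factor are now completed. First we will check that $\vartheta$ really takes values in $(\g\otimes B)\,\hat{\otimes}\,(\g\otimes B)$. The grading on $\g\otimes B$ is $(\g\otimes B)_{i}=\g\otimes B_{i}$, and each of these pieces is finite-dimensional because $\g$ is. Hence
$$
(\g\otimes B)\,\hat{\otimes}\,(\g\otimes B)=\prod_{i,j}(\g\otimes B_{i})\otimes(\g\otimes B_{j})\cong(\g\otimes\g)\otimes(B\,\hat{\otimes}\,B).
$$
The identification uses only that $\g\otimes\g$ is finite-dimensional, so that tensoring with it commutes with the infinite product. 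Under this identification $\delta(g)\bullet\nu(b)$ is a well-defined element; the same holds for triple tensors, so the $\bullet$ operation of Proposition \ref{pro:perm-coLie} extends to $(\g^{\otimes 3})\times(B\,\hat{\otimes}\,B\,\hat{\otimes}\,B)$.

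Next we will establish the compatibility formulas
\begin{align*}
(\vartheta\,\hat{\otimes}\,\id)\vartheta(g\otimes b)&=(\delta\otimes\id)\delta(g)\bullet(\nu\,\hat{\otimes}\,\id)\nu(b),\\
(\id\,\hat{\otimes}\,\vartheta)\vartheta(g\otimes b)&=(\id\otimes\delta)\delta(g)\bullet(\id\,\hat{\otimes}\,\nu)\nu(b),
\end{align*}
together with $(\hat{\tau}\,\hat{\otimes}\,\id)(X\bullet Y)=((\tau\otimes\id)X)\bullet((\hat{\tau}\,\hat{\otimes}\,\id)Y)$. For the first formula, the completed map $\vartheta\,\hat{\otimes}\,\id$ acts on $\sum(g_{(1)}\otimes b_{1,i,\alpha})\otimes(g_{(2)}\otimes b_{2,j,\alpha})$ term by term. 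Each term gives $(\delta(g_{(1)})\bullet\nu(b_{1,i,\alpha}))\otimes(g_{(2)}\otimes b_{2,j,\alpha})$. Summing over the finitely many Sweedler terms of $g$ and formally over $i,j,\alpha$ yields the claimed product, since the $\g$-part is a finite sum and the $B$-part is exactly the definition of $(\nu\,\hat{\otimes}\,\id)\nu$. The other two formulas are checked the same way.

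With these formulas we will repeat the computation in the proof of Proposition \ref{pro:perm-coLie}. The two axioms of a completed perm coalgebra give
$$
(\nu\,\hat{\otimes}\,\id)\nu=(\id\,\hat{\otimes}\,\nu)\nu=(\hat{\tau}\,\hat{\otimes}\,\id)(\id\,\hat{\otimes}\,\nu)\nu.
$$
Substituting these $B$-parts, the sum $(\vartheta\,\hat{\otimes}\,\id)\vartheta+(\hat{\tau}\,\hat{\otimes}\,\id)(\id\,\hat{\otimes}\,\vartheta)\vartheta$ becomes
$$
\big((\delta\otimes\id)\delta(g)+(\tau\otimes\id)(\id\otimes\delta)\delta(g)\big)\bullet(\id\,\hat{\otimes}\,\nu)\nu(b).
$$
The Lie coalgebra identity then turns this into $(\id\otimes\delta)\delta(g)\bullet(\id\,\hat{\otimes}\,\nu)\nu(b)=(\id\,\hat{\otimes}\,\vartheta)\vartheta(g\otimes b)$, as required.

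The only step needing care is the well-definedness and term-by-term justification in the first two paragraphs: that $\vartheta\,\hat{\otimes}\,\id$ applied to a completed tensor lands in the completed triple tensor, and that $\bullet$ is compatible with it. These rest on the finite-dimensionality of $\g$ and of each $B_{i}$. Once that is settled, the algebra is identical to the finite-dimensional case.
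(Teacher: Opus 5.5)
Your proposal is correct and takes the same approach as the paper. The paper proves this by repeating the computation of Proposition \ref{pro:perm-coLie}: factor each side into a $\g$-part $\bullet$ a $B$-part, use the completed perm coalgebra axioms to make all $B$-parts equal, and then apply the Lie coalgebra identity to the $\g$-parts. Your reduction of the well-definedness of $(\vartheta\,\hat{\otimes}\,\id)\vartheta$ to that of $(\nu\,\hat{\otimes}\,\id)\nu$, via the finite-dimensionality of $\g$, is a sensible addition that the paper leaves implicit.
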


Next, we consider the completed Leibniz bialgebra structure on the tensor
product of a Lie bialgebra and a $\bz$-graded perm algebra.

\begin{defi}\label{def:quad}
Let $\varpi(-, -)$ be a bilinear form on a $\bz$-graded perm algebra
$(B=\oplus_{i\in\bz}B_{i}, \diamond)$.

$(i)$ $\varpi(-, -)$ is called {\bf invariant}, if $\varpi(b_{1}\diamond b_{2},\; b_{3})=
        \varpi(b_{1},\; b_{2}\diamond b_{3}-b_{3}\diamond b_{2})$ for any
        $b_{1}, b_{2}, b_{3}\in B$;
$(ii)$ $\varpi(-, -)$ is called {\bf graded}, if there exists some $m\in\bz$
        such that $\varpi(B_{i}, B_{j})=0$ when $i+j+m\neq0$.\\
A {\bf quadratic $\bz$-graded perm algebra}, denoted by $(B=\oplus_{i\in\bz}B_{i},
\diamond, \varpi)$, is a $\bz$-graded perm algebra together with a skew-symmetric
invariant nondegenerate graded bilinear form.
In particular, if $B=B_{0}$, it is just the quadratic perm algebra.
\end{defi}

\begin{ex}[\cite{LZB}]\label{ex:qu-perm}
Let $(B=\oplus_{i\in\bz}B_{i}, \diamond)$ be the $\bz$-graded perm algebra given in
Example \ref{ex:grperm}, where $B=\{f_{1}\partial_{1}+f_{2}\partial_{2}\mid f_{1},
f_{2}\in\Bbbk[x_{1}^{\pm}, x_{2}^{\pm}]\}=\oplus_{i\in\bz}B_{i}$. Define a skew-symmetric
bilinear form $\varpi(-,-)$ on $(B=\oplus_{i\in\bz}B_{i}, \diamond)$ by
\begin{align*}
&\varpi(x_{1}^{i_{1}}x_{2}^{i_{2}}\partial_{2},\ \ x_{1}^{j_{1}}x_{2}^{j_{2}}\partial_{1})
=-\varpi(x_{1}^{j_{1}}x_{2}^{j_{2}}\partial_{1},\ \ x_{1}^{i_{1}}x_{2}^{i_{2}}\partial_{2})
=\delta_{i_{1}+j_{1}, 0}\delta_{i_{2}+j_{2}, 0}, \\
&\qquad\quad\varpi(x_{1}^{i_{1}}x_{2}^{i_{2}}\partial_{1},\ \
x_{1}^{j_{1}}x_{2}^{j_{2}}\partial_{1})=\varpi(x_{1}^{i_{1}}x_{2}^{i_{2}}\partial_{2},\ \
x_{1}^{j_{1}}x_{2}^{j_{2}}\partial_{2})=0,
\end{align*}
for any $i_{1}, i_{2}, j_{1}, j_{2}\in\bz$.
Then $(B=\oplus_{i\in\bz}B_{i}, \diamond, \varpi)$ is a quadratic $\bz$-graded perm algebra.
Moreover, $\{x_{1}^{-i_{1}}x_{2}^{-i_{2}}\partial_{2}$,\; $-x_{1}^{-i_{1}}
x_{2}^{-i_{2}}\partial_{1}\mid i_{1}, i_{2}\in\bz\}$ is the dual basis of
$\{x_{1}^{i_{1}}x_{2}^{i_{2}}\partial_{1},\; x_{1}^{i_{1}}x_{2}^{i_{2}}\partial_{2}\mid
i_{1}, i_{2}\in\bz\}$ with respect to $\varpi(-,-)$, consisting of homogeneous elements.
\end{ex}

For a quadratic $\bz$-graded perm algebra $(B=\oplus_{i\in\bz}B_{i}, \diamond, \varpi)$,
we have $\varpi(b_{1}\diamond b_{2},\; b_{3})=\varpi(b_{2},\; b_{1}\diamond b_{3})$
for any $b_{1}, b_{2}, b_{3}\in B$. Moreover, the skew-symmetric nondegenerate
bilinear form $\varpi(-,-)$ induces bilinear forms
$$
(\underbrace{B\,\hat{\otimes}\,B\,\hat{\otimes}\,\cdots\,\hat{\otimes}\,
B}_{n\text{-fold}})\otimes(\underbrace{B\otimes B\otimes\cdots
\otimes B}_{n\text{-fold}})\longrightarrow\Bbbk,
$$
for all $n\geq2$, which are denoted by $\hat{\varpi}(-,-)$, are defined by
$$
\hat{\varpi}\Big(\sum_{i_{1},\cdots,i_{n},\alpha} x_{1, i_{1}, \alpha}
\otimes\cdots\otimes x_{n, i_{n}, \alpha},\ \ y_{1}\otimes\cdots\otimes y_{n}\Big)
=\sum_{i_{1},\cdots,i_{n},\alpha}\prod_{j=1}^{n}
\varpi(x_{j, i_{j}, \alpha},\; y_{j}).
$$
Then, one can check that $\hat{\varpi}(-,-)$ is {\bf left nondegenerate}, i.e., if
$$
\hat{\varpi}\Big(\sum_{i_{1}, \cdots, i_{n},\alpha}x_{1, i_{1}, \alpha}
\otimes\cdots\otimes x_{n, i_{n}, \alpha},\ \ y_{1}\otimes\cdots\otimes y_{n}\Big)
=\hat{\varpi}\Big(\sum_{i_{1},\cdots,i_{n},\alpha} z_{1, i_{1}, \alpha}
\otimes\cdots\otimes z_{n, i_{n}, \alpha},\ \ y_{1}\otimes\cdots\otimes y_{n}\Big),
$$
for all homogeneous elements $y_{1}, y_{2},\cdots, y_{n}\in B$, then
$$
\sum_{i_{1},\cdots,i_{n},\alpha} x_{1, i_{1}, \alpha}
\otimes\cdots\otimes x_{n, i_{n}, \alpha}
=\sum_{i_{1},\cdots,i_{n},\alpha} z_{1, i_{1}, \alpha}
\otimes\cdots\otimes z_{n, i_{n}, \alpha}.
$$

By direct calculation, we have

\begin{lem}\label{lem:comp-dual}
Let $(B=\oplus_{i\in\bz}B_{i}, \diamond, \varpi)$ be a quadratic $\bz$-graded perm algebra.
Define a linear map $\nu_{\varpi}: B\rightarrow B\otimes B$ by
$\hat{\varpi}(\nu_{\varpi}(b_{1}),\; b_{2}\otimes b_{3})
=-\varpi(b_{1},\; b_{2}\diamond b_{3})$, for any $b_{1}, b_{2}, b_{3}\in B$.
Then $(B, \nu_{\varpi})$ is a completed perm coalgebra.
\end{lem}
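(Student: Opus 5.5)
The plan is to make $\nu_{\varpi}$ explicit using homogeneous dual bases. Then I would reduce each coassociativity-type identity, via the left nondegeneracy of $\hat{\varpi}$, to an identity of the perm product paired against $b$.

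\textbf{Step 1 (well-definedness).} Since each $B_{i}$ is finite-dimensional and $\varpi$ is nondegenerate and graded (with shift $m$), $\varpi$ restricts to a nondegenerate pairing $B_{i}\times B_{-i-m}\to\Bbbk$. So we can choose a homogeneous basis $\{e_{k}\}$ of $B$ together with a homogeneous dual basis $\{e^{k}\}$ satisfying $\varpi(e^{k}, e_{l})=\delta_{kl}$. I would then set
$$
\nu_{\varpi}(b)=-\sum_{k,l}\varpi(b,\; e_{k}\diamond e_{l})\, e^{k}\otimes e^{l}.
$$
For homogeneous $b\in B_{p}$, the coefficient $\varpi(b, e_{k}\diamond e_{l})$ can be nonzero only if $\deg e_{k}+\deg e_{l}+p+m=0$. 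Hence the component of this sum in each $B_{i}\otimes B_{j}$ is a finite sum, and $\nu_{\varpi}(b)\in B\,\hat{\otimes}\,B$. Pairing with $b_{2}\otimes b_{3}$ recovers the defining relation $\hat{\varpi}(\nu_{\varpi}(b),\, b_{2}\otimes b_{3})=-\varpi(b,\, b_{2}\diamond b_{3})$. By left nondegeneracy, $\nu_{\varpi}$ is the unique map with this property, so it is independent of the choice of bases.

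\textbf{Step 2 (pairing formulas).} Write $\nu_{\varpi}(b)=\sum_{i,j,\alpha}b_{1,i,\alpha}\otimes b_{2,j,\alpha}$ and fix homogeneous $b_{2},b_{3},b_{4}$. Applying the defining relation twice gives
\begin{align*}
\hat{\varpi}\big((\nu_{\varpi}\,\hat{\otimes}\,\id)\nu_{\varpi}(b),\; b_{2}\otimes b_{3}\otimes b_{4}\big)
&=-\sum\varpi(b_{1,i,\alpha},\, b_{2}\diamond b_{3})\,\varpi(b_{2,j,\alpha},\, b_{4})\\
&=\varpi\big(b,\;(b_{2}\diamond b_{3})\diamond b_{4}\big).
\end{align*}
In the same way:
\begin{itemize}
\item $(\id\,\hat{\otimes}\,\nu_{\varpi})\nu_{\varpi}(b)$ pairs to $\varpi(b,\, b_{2}\diamond(b_{3}\diamond b_{4}))$;
\item $(\tau\,\hat{\otimes}\,\id)(\nu_{\varpi}\,\hat{\otimes}\,\id)\nu_{\varpi}(b)$ pairs to $\varpi(b,\,(b_{3}\diamond b_{2})\diamond b_{4})$.
\end{itemize}
The perm identities $b_{2}\diamond(b_{3}\diamond b_{4})=(b_{2}\diamond b_{3})\diamond b_{4}=(b_{3}\diamond b_{2})\diamond b_{4}$ make these three pairings equal. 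Left nondegeneracy of $\hat{\varpi}$ on the triple completed tensor product then yields the identities of Definition~\ref{def:Pd-coa}$(i)$.

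\textbf{Main obstacle.} The delicate point is justifying the second equality in each pairing computation. It exchanges an infinite formal sum in $B\,\hat{\otimes}\,B\,\hat{\otimes}\,B$ with the pairing, and uses that $(\nu_{\varpi}\,\hat{\otimes}\,\id)$ applied to an infinite sum is again an element of the completed space. I would handle this with the grading. For fixed homogeneous $b_{2},b_{3},b_{4}$, the graded condition on $\varpi$ forces only the components in a single $B_{i}\otimes B_{j}\otimes B_{k}$ to contribute. These components are finite sums because each $B_{i}$ is finite-dimensional. So every pairing reduces to a finite computation, and the manipulations are legitimate.
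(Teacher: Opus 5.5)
Your proposal is correct and is essentially the paper's argument: the paper says only ``by direct calculation,'' and the intended calculation is the one you give, as in the proof of Lemma~\ref{lem:Lie-dual}. That is, you pair each iterated coproduct of $\nu_{\varpi}$ against $b_{2}\otimes b_{3}\otimes b_{4}$ via $\hat{\varpi}$, reduce to the perm identities, and conclude by left nondegeneracy of $\hat{\varpi}$. Your explicit dual-basis formula for $\nu_{\varpi}$ and the grading argument that each component of $\nu_{\varpi}(b)$ and of the iterated coproducts is a finite sum supply well-definedness details that the paper leaves implicit.
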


\begin{ex}[\cite{LZB}]\label{ex:ind-coperm}
Consider the quadratic $\bz$-graded perm algebra $(B=\oplus_{i\in\bz}B_{i}, \cdot, \varpi)$
given in Example \ref{ex:qu-perm}. Then the induced completed perm coalgebra
$(B=\oplus_{i\in\bz}B_{i}, \nu_{\varpi})$ is just the completed perm coalgebra
$(B=\oplus_{i\in\bz}B_{i}, \nu)$ given in Example \ref{ex:grpermco}.
\end{ex}

Recall that a {\bf Lie bialgebra} is a triple $(\g, [-,-], \delta)$ such that
$(\g, [-,-])$ is a Lie algebra, $(\g, \delta)$ is a Lie coalgebra, and the following
compatibility condition holds:
$$
\delta([g_{1}, g_{2}])=(\ad_{\g}(g_{1})\otimes\id+\id\otimes\ad_{\g}(g_{1}))
(\delta(g_{2}))-(\ad_{\g}(g_{2})\otimes\id+\id\otimes\ad_{\g}(g_{2}))(\delta(g_{1})),
$$
where $\ad_{\g}(g_{1})(g_{2})=[g_{1}, g_{2}]$, for all $g_{1}, g_{2}\in\g$.

\begin{defi}\label{def:CASIbia}
A {\bf completed Leibniz bialgebra} is a triple $(A, \ast, \vartheta)$ consisting of a
$\bz$-graded vector space $A=\oplus_{i}A_{i}$ and linear maps $\ast: A\otimes
A\rightarrow A$ and $\vartheta: A\rightarrow A\,\hat{\otimes}\, A$ such that
\begin{enumerate}\itemsep=0pt
\item[$(i)$] $(A, \ast)$ is a $\bz$-graded Leibniz algebra;
\item[$(ii)$] $(A, \vartheta)$ is a completed Leibniz coalgebra;
\item[$(iii)$] for any $a_{1}, a_{2}\in A$,
\begin{align}
&\qquad\qquad\quad \hat{\tau}((\fr_{A}(a_{2})\,\hat{\otimes}\,\id)(\vartheta(a_{1})))
=(\fr_{A}(a_{1})\,\hat{\otimes}\,\id)(\vartheta(a_{2})),\label{CL1}\\
&\vartheta(a_{1}\ast a_{2})=(\id\,\hat{\otimes}\,\fr_{A}(a_{2})
-(\fl_{A}+\fr_{A})(a_{2})\,\hat{\otimes}\,\id)((\id+\hat{\tau})
(\vartheta(a_{1})))\label{CL2}\\[-1mm]
&\qquad\qquad\qquad\qquad\qquad\qquad\qquad +(\id\,\hat{\otimes}\,\fl_{A}(a_{1})
+\fl_{A}(a_{1})\,\hat{\otimes}\,\id)(\vartheta(a_{2})).\nonumber
\end{align}
\end{enumerate}
\end{defi}

Completed Lie bialgebras were introduced in \cite{HBG}. The completed Leibniz bialgebra
is a generalized completed Lie bialgebra.

\begin{thm}\label{thm:Lie+perm=L}
Let $(\g, [-,-], \delta)$ be a finite-dimensional Lie bialgebra, $(B=\oplus_{i\in\bz}B_{i},
\diamond, \varpi)$ be a quadratic $\bz$-graded perm algebra and $(\g\otimes B, \ast)$ be
the induced $\bz$-graded Leibniz algebra from $(\g, [-,-])$ by $(B, \diamond)$. Define
a linear map $\vartheta: \g\otimes B\rightarrow(\g\otimes B)\otimes(\g\otimes B)$ by
$$
\vartheta(g\otimes b)=\delta(g)\bullet\nu_{\varpi}(b)
:=\sum_{(g)}\sum_{i,j,\alpha}(g_{(1)}\otimes b_{1,i,\alpha})
\otimes(g_{(2)}\otimes b_{2,j,\alpha}),
$$
for any $g\in\g$ and $b\in B$, where $\delta(g)=\sum_{(g)}g_{(1)}
\otimes g_{(2)}$ in the Sweedler notation and $\nu_{\varpi}(b)=\sum_{i,j,\alpha}
b_{1,i,\alpha}\otimes b_{2,j,\alpha}$. Then $(\g\otimes B, \ast, \vartheta)$ is a
completed Leibniz bialgebra, which is called the {\bf completed Leibniz bialgebra induced
from $(\g, [-,-], \delta)$ by $(B, \diamond, \varpi)$}.
\end{thm}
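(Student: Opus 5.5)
The plan mirrors the proof of Theorem \ref{thm:permbia-Lbi}, with the roles of the Lie and perm factors exchanged. Conditions $(i)$ and $(ii)$ of Definition \ref{def:CASIbia} are immediate. By Proposition \ref{pro:aff-Lie}, $(\g\otimes B, \ast)$ is a $\bz$-graded Leibniz algebra. By Lemma \ref{lem:comp-dual}, $(B, \nu_{\varpi})$ is a completed perm coalgebra, so Proposition \ref{pro:coperm-coLie} makes $(\g\otimes B, \vartheta)$ a completed Leibniz coalgebra. What remains is to verify the compatibility conditions \eqref{CL1} and \eqref{CL2} for $a_{1}=g\otimes b$ and $a_{2}=g'\otimes b'$.

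\textbf{Step 1: identities for $\nu_{\varpi}$.} Since the Lie factor is now an arbitrary Lie bialgebra, all identities must come from the perm side. Using invariance of $\varpi$, its consequence $\varpi(b_{1}\diamond b_{2}, b_{3})=\varpi(b_{2}, b_{1}\diamond b_{3})$, skew-symmetry, and left nondegeneracy of $\hat{\varpi}$, I will derive rearrangement identities for $\nu_{\varpi}(b)=\sum b_{1,i,\alpha}\otimes b_{2,j,\alpha}$. Each is obtained by pairing both sides with homogeneous $y_{1}\otimes y_{2}$ and comparing. They express the following expressions in terms of one another, with $b'$ moved across:
\[
\sum b_{1}\otimes(b_{2}\diamond b'),\qquad \sum (b'\diamond b_{1})\otimes b_{2},\qquad \sum (b_{1}\diamond b')\otimes b_{2},
\]
together with their $\hat{\tau}$-versions and the analogous expressions in $\nu_{\varpi}(b')$. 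I will also derive a formula for $\nu_{\varpi}(b\diamond b')$ in terms of these. These play the role that the identities for $\delta_{\omega}$ played in the proof of Theorem \ref{thm:permbia-Lbi}. In effect, they show that $(B,\diamond,\nu_{\varpi})$ satisfies the perm bialgebra axioms (in completed form), and in fact somewhat stronger "Lie-bialgebra-like" relations. Every sum involved is a well-defined element of the completed tensor product, because each homogeneous component receives only finitely many contributions; this is guaranteed by the gradedness of $\varpi$ and the finite dimensionality of each $B_{i}$.

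\textbf{Step 2: expand and regroup.} Write \eqref{CL1} and \eqref{CL2} as $\Phi=0$ and $\Psi=0$, expanding each side in $\bullet$-form, i.e.\ a $\g\otimes\g$ factor paired with a $B\,\hat{\otimes}\,B$ factor. For $\Phi$, I will use the Step 1 identities to rewrite every $B$-component as a common element, for instance $\sum(b_{1}\diamond b')\otimes b_{2}$. This collects $\Phi$ into a single term, (an element of $B\,\hat{\otimes}\,B$) $\bullet$ (an expression in $\g\otimes\g$), and I expect the $\g$-part to cancel from antisymmetry $\tau\delta=-\delta$. For $\Psi$, I will use $\nu_{\varpi}(b\diamond b')$ from Step 1 to rewrite the left side. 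After using the perm identities to bring all $B$-parts to two or three normal forms, the coefficient $\g\otimes\g$-expressions should combine into
\[
\delta([g,g'])-(\ad_{\g}(g)\otimes\id+\id\otimes\ad_{\g}(g))\delta(g')+(\ad_{\g}(g')\otimes\id+\id\otimes\ad_{\g}(g'))\delta(g),
\]
which vanishes by the Lie bialgebra compatibility, together with pieces killed by $\tau\delta=-\delta$.

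\textbf{Main obstacle.} The hard part is Step 2 for \eqref{CL2}. There are roughly eight terms, and each $\g$-part such as $[g_{(1)},g']\otimes g_{(2)}$ must be matched with the correct $B$-normal form, including signs coming from skew-symmetry of $\varpi$. My first attempt will be to choose the normal form $\sum b_{1}\otimes(b_{2}\diamond b')$ and its $\hat\tau$-image, express everything in terms of these, and then read off the coefficient of each. If the regrouping does not close, I will instead verify \eqref{CL2} by pairing against $\omega$-free test elements: apply $\hat{\varpi}(-, y_{1}\otimes y_{2})$ to the $B$-factors, which reduces the problem to a finite identity in $\g\otimes\g$ tensored with perm-algebra identities in $y_{1},y_{2},b,b'$. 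These can then be checked using the perm axioms directly.
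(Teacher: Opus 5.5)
Your outline (Propositions \ref{pro:aff-Lie}, \ref{pro:coperm-coLie} and Lemma \ref{lem:comp-dual} for the structures, then identities for $\nu_{\varpi}$ obtained by pairing with $\hat{\varpi}$) matches the paper's. However, the mechanism you propose for \eqref{CL1} would fail.

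Write $\nu_{\varpi}(b)=\sum b_{1}\otimes b_{2}$. For $a_{1}=g\otimes b$ and $a_{2}=g'\otimes b'$, the two sides of \eqref{CL1} are $\sum(g_{(2)}\otimes[g_{(1)},g'])\bullet(b_{2}\otimes(b_{1}\diamond b'))$ and $\sum([g'_{(1)},g]\otimes g'_{(2)})\bullet((b'_{1}\diamond b)\otimes b'_{2})$. Their $\g$-factors come from $\delta(g)$ and $\delta(g')$ separately. So for an arbitrary Lie bialgebra, no rewriting of the $B$-factors followed by $\tau\delta=-\delta$ can make them cancel. For instance, in Example \ref{ex:ind-Leibniz} with $g=e_{1}$, $g'=e_{2}$, the first $\g$-factor is $-e_{2}\otimes e_{2}$ and the second is $0$.

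The missing fact is that the $B$-factors vanish identically: for all $b,b'\in B$,
\[
(\ffr_{B}(b')\otimes\id)\nu_{\varpi}(b)=\sum(b_{1}\diamond b')\otimes b_{2}=0.
\]
Indeed, pairing with $y_{1}\otimes y_{2}$ and using invariance gives $-\varpi\big(b,\,(b'\diamond y_{1})\diamond y_{2}-(y_{1}\diamond b')\diamond y_{2}\big)$, which is $0$ by the perm identity $(x\diamond y)\diamond z=(y\diamond x)\diamond z$. Hence both sides of \eqref{CL1} are zero with no input from $\delta$; this is exactly the paper's argument. The same vanishing kills two terms in \eqref{CL2}: $(\id\,\hat{\otimes}\,\fr_{\g\otimes B}(g'\otimes b'))\hat{\tau}\vartheta(g\otimes b)$, and the $\fr$-part of $((\fl_{\g\otimes B}+\fr_{\g\otimes B})(g'\otimes b')\,\hat{\otimes}\,\id)\vartheta(g\otimes b)$.

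Relatedly, your aside that Step 1 makes $(B,\diamond,\nu_{\varpi})$ a (completed) perm bialgebra is false. Since $\sum b_{1}\otimes(b_{2}\diamond b')=\nu_{\varpi}(b\diamond b')=\sum(b\diamond b'_{1})\otimes b'_{2}$, the compatibility $\nu(b\diamond b')=((\ffl_{B}-\ffr_{B})(b)\otimes\id)\nu(b')+(\id\otimes\ffr_{B}(b'))\nu(b)$ would read $\nu_{\varpi}(b\diamond b')=2\nu_{\varpi}(b\diamond b')$. So nothing should be imported from the proof of Theorem \ref{thm:permbia-Lbi}.

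For \eqref{CL2} your plan does work, but not with $\nu_{\varpi}(b\diamond b')$ and its $\hat{\tau}$-image as the only normal forms. Pairing gives
\[
\sum b_{1}\otimes(b_{2}\diamond b')=\sum b'_{1}\otimes(b\diamond b'_{2})=\sum(b\diamond b'_{1})\otimes b'_{2}=\nu_{\varpi}(b\diamond b'),\qquad \sum(b_{2}\diamond b')\otimes b_{1}=\hat{\tau}\nu_{\varpi}(b\diamond b').
\]
However, $\sum(b'\diamond b_{1})\otimes b_{2}=\nu_{\varpi}(b\diamond b')+S$ and $\sum(b'\diamond b_{2})\otimes b_{1}=\hat{\tau}\nu_{\varpi}(b\diamond b')+S$. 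Here $S$ is the symmetric element with $\hat{\varpi}(S,y_{1}\otimes y_{2})=-\varpi(b,(y_{1}\diamond y_{2})\diamond b')$, which in general is not in the span of the other two.

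The three pieces then resolve as follows. The coefficient of $S$ is $(\ad_{\g}(g')\otimes\id)(\delta(g)+\tau\delta(g))=0$; this is where $\tau\delta=-\delta$ enters. The two $\hat{\tau}\nu_{\varpi}(b\diamond b')$-terms cancel by antisymmetry of the bracket. What remains is your displayed Lie-bialgebra expression $\bullet$-multiplied by $\nu_{\varpi}(b\diamond b')$, as in the paper.

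Your fallback of pairing the $B$-factors against $y_{1}\otimes y_{2}$ reaches this. So the step that genuinely needs repair is \eqref{CL1}.
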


\begin{proof}
First, by Proposition \ref{pro:coperm-coLie} and Lemma \ref{lem:comp-dual}, we get
that $(\g\otimes B, \vartheta)$ is a completed Leibniz coalgebra. Second,
for any $e, f\in B$, since
\begin{align*}
\hat{\varpi}\Big(\sum_{i,j,\alpha}b_{2,j,\alpha}\otimes(b_{1,i,\alpha}\diamond b'),\;
e\otimes f\Big)&=-\varpi(b,\;(b'\diamond f)\diamond e-(f\diamond b')\diamond e)=0,\\[-2mm]
\hat{\varpi}\Big(\sum_{i,j,\alpha}(b'_{1,i,\alpha}\diamond b)\otimes b'_{2,j,\alpha},\;
e\otimes f\Big)&=-\varpi(b',\;(b\diamond e)\diamond f-(e\diamond b)\diamond f)=0,
\end{align*}
we get $\hat{\tau}((\fr_{\g\otimes B}(g'\otimes b')\,\hat{\otimes}\,\id)
(\vartheta(g\otimes b)))=0=(\fr_{\g\otimes B}(g\otimes b)\,\hat{\otimes}\,\id)
(\vartheta(g'\otimes b'))$. Similarly, since $(\g, [-,-], \delta)$ is a Lie bialgebra,
i.e., $\tau\delta=-\delta$ and $\delta([g, g'])=(\ad_{\g}(g)\otimes\id+\id\otimes\ad_{\g}(g))
(\delta(g'))-(\ad_{\g}(g')\otimes\id+\id\otimes\ad_{\g}(g'))(\delta(g))$,
for any $g, g'\in\g$ and $b, b'\in B$, we get
\begin{align*}
&\; \vartheta((g\otimes b)\ast(g'\otimes b'))
-(\id\,\hat{\otimes}\,\fr_{\g\otimes B}(g'\otimes b')
-(\fl_{\g\otimes B}+\fr_{\g\otimes B})(g'\otimes b')\,\hat{\otimes}\,\id)((\id+\hat{\tau})
(\vartheta(g\otimes b)))\\[-1mm]
&\qquad\qquad\qquad\qquad-(\id\,\hat{\otimes}\,\fl_{\g\otimes B}(g\otimes b)
+\fl_{\g\otimes B}(g\otimes b)\,\hat{\otimes}\,\id)(\vartheta(g'\otimes b'))\\
=&\;\Big(\delta([g, g'])-(\ad_{\g}(g)\otimes\id+\id\otimes\ad_{\g}(g))
(\delta(g'))+(\ad_{\g}(g')\otimes\id+\id\otimes\ad_{\g}(g'))(\delta(g))\Big)
\nu_{\varpi}(b\diamond b')\\
=&\; 0.
\end{align*}
Thus, $(\g\otimes B, \ast, \vartheta)$ is a completed Leibniz bialgebra.
\end{proof}

Recall that a Lie bialgebra $(\g, [-,-], \delta)$ is called {\bf coboundary}
if there exists an element $r\in\g\otimes\g$ such that $\delta=\delta_{r}$,
where
\begin{align}
\delta_{r}(g)=(\id\otimes\ad_{\g}(g)+\ad_{\g}(g)\otimes\id)(r), \label{lie-cobo}
\end{align}
for any $g\in\g$. Let $(\g, [-,-])$ be a Lie algebra. An element
$r=\sum_{i}x_{i}\otimes y_{i}\in\g\otimes\g$ is said to be {\bf $\ad_{\g}$-invariant}
if $(\id\otimes\ad_{\g}(g)+\ad_{\g}(g)\otimes\id)(r)=0$ for all $g\in\g$. The equation
$$
\mathbf{C}_{r}:=[r_{12}, r_{13}]+[r_{13}, r_{23}]+[r_{12}, r_{23}]=0
$$
is called the {\bf classical Yang-Baxter equation} (or $\CYBE$) in $(\g, [-,-])$,
where $[r_{12}, r_{13}]=\sum_{i,j}[x_{i}, x_{j}]\otimes y_{i}\otimes y_{j}$,
$[r_{13}, r_{23}]=\sum_{i,j}x_{i}\otimes x_{j}\otimes[y_{i}, y_{j}]$ and
$[r_{12}, r_{23}]=\sum_{i,j}x_{i}\otimes[y_{i}, x_{j}]\otimes y_{j}$.

\begin{pro}[\cite{RS,LS}]\label{pro:lie-bia}
Let $(\g, [-,-])$ be a Lie algebra, $r\in\g\otimes\g$ and $\delta_{r}:
\g\rightarrow\g\otimes\g$ be the linear map defined by Eq. \eqref{lie-cobo}.
\begin{enumerate}\itemsep=0pt
\item[$(i)$] If $r$ is a skew-symmetric solution of the $\CYBE$ in $(\g, [-,-])$, then
     $(\g, [-,-], \delta_{r})$ is a Lie bialgebra, which is called a {\bf triangular Lie
     bialgebra} associated with $r$.
\item[$(ii)$] If $r$ is a solution of the $\CYBE$ in $(\g, [-,-])$ and $r+\tau(r)$ is
     $\ad_{\g}$-invariant, then $(\g, [-,-], \delta_{r})$ is a Lie bialgebra, which is
     called a {\bf quasi-triangular Lie bialgebra} associated with $r$.
\end{enumerate}
\end{pro}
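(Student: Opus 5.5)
The statement is cited from \cite{RS,LS}, so I would reprove it directly from the definition of a Lie bialgebra. I treat $(ii)$ first, since $(i)$ is the special case in which $r+\tau(r)=0$ is trivially $\ad_{\g}$-invariant. Two things must be checked for $\delta_{r}$ given by Eq. \eqref{lie-cobo}: the compatibility (cocycle) condition and the Lie coalgebra axioms.

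\emph{Compatibility.} Write $\delta_{r}(g)=\ad_{\g}^{(2)}(g)(r)$ with $\ad_{\g}^{(2)}(g)=\ad_{\g}(g)\otimes\id+\id\otimes\ad_{\g}(g)$. Since $\ad_{\g}$ is a representation, so is its tensor square $\ad_{\g}^{(2)}$. Hence
$$
\delta_{r}([g_{1},g_{2}])=\ad_{\g}^{(2)}(g_{1})\ad_{\g}^{(2)}(g_{2})(r)-\ad_{\g}^{(2)}(g_{2})\ad_{\g}^{(2)}(g_{1})(r),
$$
which is exactly the required identity. This holds for every $r$, so no hypothesis is used here.

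\emph{Skew-symmetry.} We have $\delta_{r}+\tau\delta_{r}=\ad_{\g}^{(2)}(g)(r+\tau(r))$, because $\ad_{\g}^{(2)}(g)$ commutes with $\tau$. This vanishes by the $\ad_{\g}$-invariance of $r+\tau(r)$.

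\emph{Co-Jacobi identity.} This is the main step. Given skew-symmetry, co-Jacobi is equivalent to the vanishing of the cyclic sum $\mathrm{Alt}:=(\id+\sigma+\sigma^{2})(\delta_{r}\otimes\id)\delta_{r}$, where $\sigma$ is the cyclic permutation of $\g^{\otimes3}$. I would use the standard identity
$$
\mathrm{Alt}(g)=\ad_{\g}^{(3)}(g)(\mathbf{C}_{r})+\big(\text{terms involving } r+\tau(r)\big),
$$
where $\ad_{\g}^{(3)}(g)$ is the diagonal action on $\g^{\otimes3}$. The correction terms are of the form $[\,(\ad_{\g}^{(2)}(g)(r+\tau(r)))_{ij},\, r_{kl}\,]$, or are obtained from $\mathbf{C}$-type expressions by replacing one copy of $r$ with its symmetric part.

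To derive this, I would expand $(\delta_{r}\otimes\id)\delta_{r}(g)$ in terms $x_{i},y_{i},x_{j},y_{j},g$ and apply the Jacobi identity to regroup into $[g,\cdot]$ acting on $[r_{12},r_{13}]$, $[r_{12},r_{23}]$ and $[r_{13},r_{23}]$. Every time $\tau(r)$ is swapped for $-r$, the defect $r+\tau(r)$ appears. Those defect terms are killed by invariance: $(\ad_{\g}(h)\otimes\id+\id\otimes\ad_{\g}(h))(r+\tau(r))=0$ lets one move the bracket across the symmetric part. With $\mathbf{C}_{r}=0$, we get $\mathrm{Alt}=0$.

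The expected obstacle is the bookkeeping in this expansion, namely verifying that every term not in $\ad^{(3)}(g)\mathbf{C}_{r}$ pairs up into an invariance expression. In case $(i)$ this is cleaner, since $\tau(r)=-r$ gives $\mathrm{Alt}(g)=\ad_{\g}^{(3)}(g)(\mathbf{C}_{r})$ directly. I would verify $(i)$ first and then redo the computation keeping $s:=r+\tau(r)$ explicit to obtain $(ii)$.
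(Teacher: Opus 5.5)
The paper gives no proof of Proposition~\ref{pro:lie-bia}; it is imported from the literature, so there is no in-paper argument to compare with. Your outline is the standard argument and is correct. The cocycle identity holds for every $r$ because $\ad_{\g}^{(2)}$ is a representation. Skew-symmetry of $\delta_{r}$ is exactly the $\ad_{\g}$-invariance of $r+\tau(r)$. Once $\delta_{r}$ is skew, the paper's Leibniz-form co-Jacobi identity is equivalent to the vanishing of your cyclic sum. And $(i)$ is the special case $r+\tau(r)=0$.

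The only part that remains a sketch is the co-Jacobi bookkeeping in $(ii)$, and you do not need to redo it with $s$ explicit. Write $r=a+s$ with $a=\tfrac12(r-\tau(r))$ and $s=\tfrac12(r+\tau(r))$. Invariance of $s$ says precisely that $\delta_{s}=0$, so $\delta_{r}=\delta_{a}$ with $a$ skew-symmetric. Your case-$(i)$ identity, applied to $a$, then expresses $\mathrm{Alt}(g)$ through $\ad_{\g}^{(3)}(g)(\mathbf{C}_{a})$.

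It remains to show $\ad_{\g}^{(3)}(g)(\mathbf{C}_{a})=\ad_{\g}^{(3)}(g)(\mathbf{C}_{r})$. The six mixed terms of $\mathbf{C}_{a+s}$ cancel in pairs:
\[
[s_{12},a_{13}]+[s_{12},a_{23}]=0,\qquad [a_{12},s_{23}]+[a_{13},s_{23}]=0,\qquad [a_{12},s_{13}]+[s_{13},a_{23}]=0.
\]
Each sum is $\ad_{\g}^{(2)}(h)(s)$, with $h$ one leg of $a$, tensored with the other leg of $a$; the third pair also uses $\tau(a)=-a$. Moreover, $\ad_{\g}^{(3)}(g)(\mathbf{C}_{s})=0$. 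Indeed, each bracket map is $\g$-equivariant, so $\ad_{\g}^{(3)}(g)(\mathbf{C}_{s})$ is a sum of terms each containing $\ad_{\g}^{(2)}(g)(s)=0$. Hence $\mathrm{Alt}(g)$ is a multiple of $\ad_{\g}^{(3)}(g)(\mathbf{C}_{r})$, which vanishes. This also shows that $\ad_{\g}$-invariance of $\mathbf{C}_{r}$ would already suffice.

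This refines your description of the correction terms. The terms obtained by replacing one copy of $r$ with its symmetric part do not vanish individually. They cancel only in the pairs above, using invariance at $h$ equal to a leg of $a$ rather than at $h=g$. Since invariance is assumed for every $h$, your argument goes through. The decomposition simply turns the ``bookkeeping obstacle'' into a direct reduction of $(ii)$ to $(i)$.
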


Let $(A=\oplus_{i\in\bz}A_{i}, \ast)$ be a $\bz$-graded Leibniz algebra.
Suppose that $r=\sum_{i,j,\alpha}x_{i\alpha}\otimes y_{j\alpha}\in A\,\hat{\otimes}\,A$.
We denote $r_{12}\ast r_{13}:=\sum_{i,j,k,l,\alpha,\beta}(x_{i,\alpha}\ast x_{k,\beta})
\otimes y_{j,\alpha}\otimes y_{l,\beta}$, $r_{12}\ast r_{23}:=\sum_{i,j,k,l,\alpha,\beta}
x_{i,\alpha}\otimes(y_{j,\alpha}\ast x_{k,\beta})\otimes y_{l,\beta}$, $r_{23}\ast r_{12}:=
\sum_{i,j,k,l,\alpha,\beta}x_{k,\alpha}\otimes(x_{i,\beta}\ast y_{l,\beta})\otimes
y_{j,\alpha}$, $r_{23}\ast r_{13}:=\sum_{i,j,k,l,\alpha,\beta}x_{k,\alpha}\otimes
x_{i,\beta}\otimes(y_{j,\alpha}\ast y_{l,\beta})$. If $r\in A\,\hat{\otimes}\,A$ satisfies
$\mathbf{L}_{r}=r_{12}\ast r_{13}-r_{12}\ast r_{23}-r_{23}\ast r_{12}+r_{23}\ast r_{13}=0$
in $A\,\hat{\otimes}\,A\,\hat{\otimes}\,A$, then $r$ is called a
{\bf completed solution} of the $\LYBE$ in $(A=\oplus_{i\in\bz}A_{i}, \ast)$.
If $A=A_{0}$ is finite-dimensional, a completed solution of the $\LYBE$ in $(A, \ast)$
is just a solution of the $\LYBE$ in the Leibniz algebra $(A=A_{0}, \ast)$. The same
argument of the proof for \cite[Corollary 2.4.1]{Bai}
extends to the completed case. We obtain the following proposition.

\begin{pro}\label{pro:Leib-tri}
Let $(A=\oplus_{i\in\bz}A_{i}, \ast)$ be a $\bz$-graded Leibniz algebra
and $r\in A\,\hat{\otimes}\,A$ is a completed solution of the $\LYBE$ in
$(A=\oplus_{i\in\bz}A_{i}, \ast)$. Define a bilinear map $\vartheta_{r}: A\rightarrow
A\,\hat{\otimes}\,A$ by
\begin{align}
\vartheta_{r}(a):=\big((\fl_{A}+\fr_{A})(a)\,\hat{\otimes}\,\id)
-\id\,\hat{\otimes}\,\fr_{A}(a)\big)(r),  \label{cL-cobo}
\end{align}
for any $a\in A$. If $r$ is symmetric, i.e., $r=\hat{\tau}(r)$, then
$(A, \ast, \vartheta_{r})$ is a completed Leibniz bialgebra, which is called a
{\bf triangular completed Leibniz bialgebra} associated with $r$.
\end{pro}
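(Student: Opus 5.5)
We have to check conditions (i)–(iii) of Definition \ref{def:CASIbia} for $\vartheta_{r}$. Condition (i) is given. The plan is to redo the finite-dimensional argument behind Proposition \ref{pro:sLib-bia}$(ii)$ (as in \cite{BLST,TS}), keeping track of the completed tensor products. Throughout we write $r=\sum_{i,j,\alpha}x_{i,\alpha}\otimes y_{j,\alpha}$. Each step is an identity between formal sums in $A\,\hat{\otimes}\,A$ or $A\,\hat{\otimes}\,A\,\hat{\otimes}\,A$. We check it componentwise in each graded piece $A_{p}\otimes A_{q}$ (resp. $A_{p}\otimes A_{q}\otimes A_{s}$). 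Since every $A_{i}$ is finite-dimensional and $\ast$ is graded, each component receives only finitely many contributions from a given computation. For example, for fixed $a\in A_{k}$, the component of $(\fl_{A}(a)\,\hat{\otimes}\,\id)(r)$ in $A_{p}\otimes A_{q}$ only involves the part of $r$ in $A_{p-k}\otimes A_{q}$. So all rearrangements of sums are legitimate and $\vartheta_{r}$ is well defined.

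First, the compatibility conditions \eqref{CL1} and \eqref{CL2}. Substituting $\vartheta_{r}(a)=F(a)(r)$ into both sides of \eqref{CL2}, the difference becomes an expression of the form
$$
\big(\text{operators built from }\fl_{A}(a_{1}),\fr_{A}(a_{1}),\fl_{A}(a_{2}),\fr_{A}(a_{2})\big)(r-\hat{\tau}(r)).
$$
The terms not involving $r-\hat{\tau}(r)$ cancel by the Leibniz identity and the representation identities of Definition \ref{def:Le-rep} for the regular representation $(A,\fl_{A},\fr_{A})$. This is the same algebra as in the finite-dimensional case, now applied summand by summand. Similarly, \eqref{CL1} reduces to an expression in $r-\hat{\tau}(r)$. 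Symmetry $r=\hat{\tau}(r)$ makes both vanish. Actually only $F$-invariance of $r-\hat\tau(r)$ is needed, but we do not need that generality.

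Second, the coalgebra condition (ii). Expanding
$$
(\id\,\hat{\otimes}\,\vartheta_{r})\vartheta_{r}(a)-(\vartheta_{r}\,\hat{\otimes}\,\id)\vartheta_{r}(a)-(\hat{\tau}\,\hat{\otimes}\,\id)(\id\,\hat{\otimes}\,\vartheta_{r})\vartheta_{r}(a)
$$
and following the standard computation (as in \cite[Corollary 2.4.1]{Bai} for pre-Lie algebras, adapted to Leibniz in \cite{BLST}), this splits into two kinds of terms. The first kind is $G(a)(\mathbf{L}_{r})$, where $G(a)$ is a combination of $\fl_{A}(a)$ and $\fr_{A}(a)$ acting on the three tensor legs, possibly composed with permutations of the legs. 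The second kind consists of terms involving $r-\hat{\tau}(r)$ paired against a factor of $r$. Using $\mathbf{L}_{r}=0$ in $A\,\hat{\otimes}\,A\,\hat{\otimes}\,A$ and $r=\hat{\tau}(r)$, everything vanishes. The operators $G(a)$ extend to completed tensors by the finiteness remark above, so $\mathbf{L}_{r}=0$ implies $G(a)(\mathbf{L}_{r})=0$.

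The main obstacle is this coalgebra step. It requires actually producing the identity that expresses the Leibniz co-Jacobi defect of $\vartheta_{r}$ through $\mathbf{L}_{r}$ and $r-\hat\tau(r)$, including getting the permutations of legs right for the Leibniz (non-antisymmetric) case. I would first derive it for finite sums $r$ using the identities from the finite-dimensional Proposition \ref{pro:sLib-bia}. I would then observe that both sides are given by the same universal linear-in-each-$r$ formulas, so the identity persists componentwise for completed $r$.
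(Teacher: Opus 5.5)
Your route is the paper's. The paper gives no computation beyond saying that the finite-dimensional argument extends to the completed case. Your reductions of \eqref{CL1} and \eqref{CL2} to expressions in $r-\hat{\tau}(r)$ are correct; for instance, using $(a\ast b+b\ast a)\ast c=0$, condition \eqref{CL1} becomes exactly $(\fr_{A}(a_{1})\,\hat{\otimes}\,\fr_{A}(a_{2}))(r-\hat{\tau}(r))=0$.

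The genuine problem is the one step specific to the completed setting: your finiteness remark. It holds for a single application of $F(a)$ to $r$, but not for expressions quadratic in $r$. Write $r=\sum_{i,j}r_{ij}$ with $r_{ij}\in A_{i}\otimes A_{j}$.
\begin{itemize}
\item The $A_{p}\otimes A_{q}\otimes A_{s}$-component of $r_{12}\ast r_{13}$ collects the terms $(x\ast x')\otimes y\otimes y'$ with $x\otimes y$ from $r_{kq}$ and $x'\otimes y'$ from $r_{p-k,s}$, for every $k\in\bz$.
\item Similarly, $(\id\,\hat{\otimes}\,\vartheta_{r})\vartheta_{r}(a)$ contains $\sum x\otimes x'\otimes(y'\ast(y\ast a))$. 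Its $A_{p}\otimes A_{q}\otimes A_{s}$-component runs over all pairs $r_{pj}$, $r_{qj'}$ with $j+j'$ fixed.
\end{itemize}
For a general $r\in A\,\hat{\otimes}\,A$ these are infinite sums in a finite-dimensional space. So neither $\mathbf{L}_{r}$ nor the co-Leibniz defect of $\vartheta_{r}$ need be defined. Your transfer ``prove the identity for finite tensors, then compare components'' then has no justification, because a fixed component is not determined by finitely many $r_{ij}$.

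What is missing is a finiteness hypothesis on $r$. Both the statement and the paper leave it implicit; the paper likewise simply declares $(\delta\,\hat{\otimes}\,\id)\delta$ well defined. The natural hypothesis is homogeneity: $r\in\prod_{i+j=d}A_{i}\otimes A_{j}$ for a fixed $d$. This holds for $\widehat{r}$ in Proposition \ref{pro:CYBE-LYBE}, since $\varpi$ is graded and so $e_{j}\in B_{k}$ forces $f_{j}\in B_{-k-m}$.

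Under homogeneity, $\vartheta_{r}$ is homogeneous of degree $d$. Every graded component of $\mathbf{L}_{r}$, $(\id\,\hat{\otimes}\,\vartheta_{r})\vartheta_{r}(a)$ and $(\vartheta_{r}\,\hat{\otimes}\,\id)\vartheta_{r}(a)$ then involves only finitely many $r_{ij}$. Your truncation argument works: truncate $r$ to a finite $\hat{\tau}$-stable set of indices (so symmetry is kept), apply the finite-tensor identity, and read off the component.

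With this hypothesis added, your proposal is a correct expansion of the paper's one-line argument. The key co-Leibniz identity is still imported from \cite{BLST}, as in the paper.
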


Let $(A=\oplus_{i\in\bz}A_{i}, \ast)$ be a $\bz$-graded Leibniz algebra.
A completed Leibniz bialgebra $(A, \ast, \vartheta)$ is called {\bf coboundary} if
there exists an element $r\in A\,\hat{\otimes}\,A$ such that $\vartheta=\vartheta_{r}$.
Moreover, we call $r\in A\,\hat{\otimes}\,A$ is {\bf Leib-invariant} if
$$
\big((\fl_{A}+\fr_{A})(a)\,\hat{\otimes}\,\id)-\id\,\hat{\otimes}\,\fr_{A}(a)\big)(r)=0,
$$
for all $a\in A$. Similar to the proofs of Proposition 2.8 and Corollary 2.9 in
\cite{BLST}, we have the following conclusion.

\begin{pro}\label{pro:Leib-qtri}
Let $(A=\oplus_{i\in\bz}A_{i}, \ast)$ be a $\bz$-graded Leibniz algebra
and $r\in A\,\hat{\otimes}\,A$ is a completed solution of the $\LYBE$ in
$(A=\oplus_{i\in\bz}A_{i}, \ast)$. Define a bilinear map $\vartheta_{r}: A\rightarrow
A\,\hat{\otimes}\,A$ by Eq. \ref{cL-cobo} for any $a\in A$. If $r-\hat{\tau}(r)$ is
Leib-invariant, then $(A, \ast, \vartheta_{r})$ is a completed Leibniz bialgebra,
which is called a {\bf quasi-triangular completed Leibniz bialgebra} associated with $r$.
\end{pro}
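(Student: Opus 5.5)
We follow the route of Proposition 2.8 and Corollary 2.9 in \cite{BLST}, the finite-dimensional quasi-triangular statement quoted as Proposition \ref{pro:sLib-bia}$(i)$, and transport it to the completed setting. We must check conditions $(ii)$ and $(iii)$ of Definition \ref{def:CASIbia} for $\vartheta_{r}$. Condition $(i)$ is given.

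First we confirm that everything is well defined. Write $r=\sum_{i,j,\alpha}x_{i,\alpha}\otimes y_{j,\alpha}$. For homogeneous $a\in A_{k}$, the operators $\fl_{A}(a)$ and $\fr_{A}(a)$ shift degree by $k$. Hence $\vartheta_{r}(a)$, $\vartheta_{r}(a_{1}\ast a_{2})$ and the iterated expressions $(\vartheta_{r}\,\hat{\otimes}\,\id)\vartheta_{r}$ and $(\id\,\hat{\otimes}\,\vartheta_{r})\vartheta_{r}$ are well-defined elements of the completed tensor products. Each graded component $A_{p}\otimes A_{q}$ or $A_{p}\otimes A_{q}\otimes A_{s}$ receives only finitely many contributions, because each $A_{i}$ is finite-dimensional and the degree bookkeeping is fixed. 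Therefore every identity below can be verified componentwise, exactly as in the finite-dimensional case, using $\hat{\tau}$ and $\hat{\otimes}$.

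Next we handle the compatibility conditions \eqref{CL1} and \eqref{CL2}. Expand both sides using $\vartheta_{r}(a)=F(a)(r)$, the Leibniz identity, and the representation identities for $(A,\fl_{A},\fr_{A})$. As in \cite{BLST}, the difference between the two sides of \eqref{CL2} should reduce to an expression of the form $(\text{operators in }a_{1},a_{2})\big(r-\hat{\tau}(r)\big)$. Concretely, it is built from $F(a_{2})(r-\hat{\tau}(r))$ after applying $\fl_{A}(a_{1})\,\hat{\otimes}\,\id$ or $\id\,\hat{\otimes}\,\fl_{A}(a_{1})$, together with $F(a_{1}\ast a_{2})(r-\hat{\tau}(r))$. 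Similarly, the difference in \eqref{CL1} becomes a term that involves $r$ only through $r-\hat{\tau}(r)$, paired with $\fr_{A}$. Leib-invariance of $r-\hat{\tau}(r)$ then kills both differences.

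The coalgebra axiom is the main step. Following \cite{BLST}, we compute
$$
(\id\,\hat{\otimes}\,\vartheta_{r})\vartheta_{r}(a)-(\vartheta_{r}\,\hat{\otimes}\,\id)\vartheta_{r}(a)-(\hat{\tau}\,\hat{\otimes}\,\id)(\id\,\hat{\otimes}\,\vartheta_{r})\vartheta_{r}(a).
$$
The goal is to write it as $G(a)(\mathbf{L}_{r})$ plus terms involving $r-\hat{\tau}(r)$ that are Leib-invariant and hence vanish. Here $G(a)$ is a sum of operators of the form $\fl_{A}(a)$ or $\fr_{A}(a)$ acting on one tensor factor, possibly composed with permutations of the three factors. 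Since $\mathbf{L}_{r}=0$, the expression vanishes. I expect this step to be the main obstacle. It requires carefully tracking the dozen or so terms and using the Leibniz identity in the forms $\fr_{A}(a)\fr_{A}(b)=-\fr_{A}(a)\fl_{A}(b)$ and $\fl_{A}(a\ast b)=[\fl_{A}(a),\fl_{A}(b)]$. I would import the finite-dimensional identity from \cite{BLST} term by term. The only new point is to observe that every rearrangement used there moves finitely many summands within a fixed graded component, so it remains valid for the infinite formal sums in $A\,\hat{\otimes}\,A\,\hat{\otimes}\,A$.
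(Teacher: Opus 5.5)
Your strategy is exactly what the paper does: its only argument is that the proof is "similar to Proposition 2.8 and Corollary 2.9 of \cite{BLST}", i.e. transport those identities componentwise. The algebraic core of your plan is sound. The genuine gap is the well-definedness paragraph, which is the one step you argue beyond the citation, and it is false as stated.

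For homogeneous $a$, the elements $\vartheta_{r}(a)$, $\vartheta_{r}(a_{1}\ast a_{2})$ and both sides of \eqref{CL1}, \eqref{CL2} are indeed fine, since each operator shifts degrees by a fixed amount. The trouble is elsewhere. Write $r=\sum_{i,j}r_{ij}$ with $r_{ij}\in A_{i}\otimes A_{j}$.
\begin{itemize}
\item The $A_{s}\otimes A_{t}\otimes A_{q}$-component of $(\vartheta_{r}\,\hat{\otimes}\,\id)\vartheta_{r}(a)$ gets a possibly nonzero contribution from the $A_{p}\otimes A_{q}$-component of $\vartheta_{r}(a)$ for every $p\in\bz$.
\item The $A_{p}\otimes A_{j}\otimes A_{l}$-component of $r_{12}\ast r_{13}$ is a sum over all $i\in\bz$ of terms built from $r_{ij}$ and $r_{p-i,l}$.
\end{itemize}
These are infinite sums in a finite-dimensional space with no topology, and finite-dimensionality of the $A_{i}$ does not help. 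For instance, take the Witt algebra ($[L_{m},L_{n}]=(m-n)L_{m+n}$, $\deg L_{n}=n$) and $r=\sum_{i,j}L_{i}\otimes L_{j}$. Then that component is $\sum_{i\in\bz}(2i-p)L_{p}\otimes L_{j}\otimes L_{l}$. So for arbitrary $r\in A\,\hat{\otimes}\,A$, neither the hypothesis $\mathbf{L}_{r}=0$ nor condition $(ii)$ of Definition \ref{def:CASIbia} is even defined.

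You need a finiteness hypothesis, which the paper leaves implicit. One choice is $r$ homogeneous, i.e. $r\in\prod_{i+j=m}A_{i}\otimes A_{j}$ for a fixed $m$. This holds for $\widehat{r}$ in Proposition \ref{pro:CYBE-LYBE}, because $\varpi$ is graded. More generally, it suffices that every row and every column of $(r_{ij})$ has only finitely many nonzero entries. Under such a hypothesis, every component of every expression involves only finitely many $r_{ij}$, and your componentwise argument goes through.

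Transport the algebraic identity, as you propose, and not the finite-dimensional theorem. The identity is: coalgebra defect $=G(a)(\mathbf{L}_{r})$ plus terms linear in the $F(\cdot)(s)$, where $s=r-\hat{\tau}(r)$; it holds in any Leibniz algebra for finite tensors. The theorem itself cannot be applied to a truncation of $r$, because truncating destroys the Leib-invariance of $s$.

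A smaller point: your guessed shape for \eqref{CL2} is off. Using $(u\ast v+v\ast u)\ast w=0$ and $\hat{\tau}(s)=-s$, the defects (left side minus right side) are:
\begin{itemize}
\item for \eqref{CL1}: $-(\fr_{A}(a_{1})\,\hat{\otimes}\,\id)F(a_{2})(s)$;
\item for \eqref{CL2}: $(\fr_{A}(a_{1})\,\hat{\otimes}\,\id)F(a_{2})(s)+((\fl_{A}+\fr_{A})(a_{2})\,\hat{\otimes}\,\id)\hat{\tau}(F(a_{1})(s))$.
\end{itemize}
No $\fl_{A}(a_{1})$ or $F(a_{1}\ast a_{2})$ terms occur, and $\mathbf{L}_{r}=0$ is needed only for the coalgebra axiom.
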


Obviously, triangular completed Leibniz bialgebra is a special type of quasi-triangular
completed Leibniz bialgebra.
Next, we consider the relation between the solutions of the $\CYBE$ in a Lie
algebra and the solutions of the $\LYBE$ in the induced Leibniz algebra.
Let $(B=\oplus_{i\in\bz}B_{i}, \diamond, \varpi)$ be a quadratic $\bz$-graded perm
algebra and $\{e_{j}\}_{j\in\Omega}$ be a basis of $B=\oplus_{i\in\bz}B_{i}$ consisting
of homogeneous elements. Since $\varpi(-,-)$ is graded, skew-symmetric and nondegenerate,
we get a homogeneous dual basis $\{f_{j}\}_{j\in\Omega}$ of $B=\oplus_{i\in\bz}B_{i}$,
which is called the dual basis of $\{e_{i}\}_{i\in\Omega}$ with respect to $\varpi(-,-)$,
by $\varpi(f_{i}, e_{j})=\delta_{ij}$, where $\delta_{ij}$ is the Kronecker delta.

\begin{pro}\label{pro:CYBE-LYBE}
Let $(\g, [-,-])$ be a Lie algebra, $(B=\oplus_{i\in\bz}B_{i}, \diamond,
\varpi)$ be a quadratic $\bz$-graded perm algebra and $(\g\otimes B, \ast)$ be the
induced $\bz$-graded Leibniz algebra. Suppose that $r=\sum_{i}x_{i}\otimes y_{i}\in
\g\otimes\g$ is a solution of the $\CYBE$ in $(\g, [-,-])$. Then
\begin{align}
\widehat{r}=\sum_{i}\sum_{j\in\Omega}(x_{i}\otimes e_{j})\otimes(y_{i}\otimes f_{j})
\in(\g\otimes B)\,\hat{\otimes}\,(\g\otimes B)  \label{r-indL}
\end{align}
is a completed solution of the $\LYBE$ in $(A\otimes B, \ast)$,
where $\{e_{j}\}_{j\in\Omega}$ is a homogeneous basis of $B=\oplus_{i\in\bz}B_{i}$
and $\{f_{j}\}_{j\in\Omega}$ is the homogeneous dual basis of $\{e_{j}\}_{j\in\Omega}$
with respect to $\varpi(-,-)$. Moreover, we have
\begin{enumerate}\itemsep=0pt
\item[$(i)$] $\widehat{r}$ is symmetric if $r$ is skew-symmetric;
\item[$(ii)$] $\widehat{r}-\hat{\tau}(\widehat{r})$ is Leib-invariant if $r+\tau(r)$
     is $\ad_{\g}$-invariant.
\end{enumerate}
\end{pro}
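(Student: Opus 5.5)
The plan is to follow the pattern of Proposition~\ref{pro:PYBE-LYBE}, with the roles of $\g$ and $B$ exchanged. The bilinear form is now on the perm side, and the relevant structure element is the ``Casimir'' element
$$
\beta:=\sum_{j\in\Omega}e_{j}\otimes f_{j}\in B\,\hat{\otimes}\,B .
$$
By the definition of $\widehat{r}$ in Eq.~\eqref{r-indL}, every term of $\mathbf{L}_{\widehat{r}}$ splits as a $\g$-tensor $\bullet$ a $B$-tensor:
\begin{align*}
\mathbf{L}_{\widehat{r}}
=&\;[r_{12},r_{13}]\bullet T_{1}-[r_{12},r_{23}]\bullet T_{2}\\
&\;-\Big(\sum_{i,k}x_{k}\otimes[x_{i},y_{k}]\otimes y_{i}\Big)\bullet T_{3}
+\Big(\sum_{i,k}x_{k}\otimes x_{i}\otimes[y_{k},y_{i}]\Big)\bullet T_{4},
\end{align*}
where
$$
T_{1}=\sum_{j,l}(e_{j}\diamond e_{l})\otimes f_{j}\otimes f_{l},\quad
T_{2}=\sum_{j,l}e_{j}\otimes(f_{j}\diamond e_{l})\otimes f_{l},\quad
T_{3}=\sum_{j,l}e_{l}\otimes(e_{j}\diamond f_{l})\otimes f_{j},\quad
T_{4}=\sum_{j,l}e_{l}\otimes e_{j}\otimes(f_{l}\diamond f_{j}).
$$

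\emph{Step 1: identify the $B$-tensors.} I will compute $\hat{\varpi}(T_{k},\,b_{1}\otimes b_{2}\otimes b_{3})$ for homogeneous $b_{1},b_{2},b_{3}\in B$. This uses three facts: $\sum_{j}\varpi(e_{j},b)f_{j}$ and $\sum_{j}\varpi(f_{j},b)e_{j}$ reproduce $\pm b$ (because $\varpi$ is skew-symmetric); $\varpi(b_{1}\diamond b_{2},b_{3})=\varpi(b_{2},b_{1}\diamond b_{3})$; and the invariance identity. Each pairing then becomes $\pm\varpi(b_{s},\text{a product of the other two})$. As in the proof of Theorem~\ref{thm:Lie+perm=L}, where left commutativity $(b\diamond e)\diamond f=(e\diamond b)\diamond f$ killed such terms, I expect $T_{3}=0$, while $T_{1}$, $T_{2}$ and $T_{4}$ all coincide with $\pm T_{1}$. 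Left nondegeneracy of $\hat{\varpi}$ upgrades these scalar identities to identities in $B\,\hat{\otimes}\,B\,\hat{\otimes}\,B$.

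Granting Step~1, $\mathbf{L}_{\widehat{r}}=\mathbf{C}_{r}\bullet T_{1}$, up to an overall sign and up to rewriting $\sum x_{k}\otimes x_{i}\otimes[y_{k},y_{i}]$ as $[r_{13},r_{23}]$. It therefore vanishes when $r$ solves the $\CYBE$.

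\emph{Step 2: parts (i) and (ii).} For (i), skew-symmetry of $\varpi$ gives $\varpi(f_{j},e_{k})=\delta_{jk}=-\varpi(e_{k},f_{j})$, so $\sum_{j}f_{j}\otimes e_{j}=-\beta$. Hence $\hat{\tau}(\widehat{r})=\tau(r)\bullet(-\beta)$, and this equals $\widehat{r}$ exactly when $\tau(r)=-r$. For (ii), note that $\widehat{r}-\hat{\tau}(\widehat{r})=(r+\tau(r))\bullet\beta$. I then apply $F(g\otimes b)=(\fl+\fr)(g\otimes b)\,\hat{\otimes}\,\id-\id\,\hat{\otimes}\,\fr(g\otimes b)$. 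Using again the pairing identities for $\beta$, namely
$$
\sum_{j}(b\diamond e_{j})\otimes f_{j}=\pm\sum_{j}e_{j}\otimes(b\diamond f_{j}),
$$
the vanishing of $\sum_{j}(e_{j}\diamond b-b\diamond e_{j})\otimes f_{j}$-type combinations, and the like, I will rewrite the result as $\big((\ad_{\g}(g)\otimes\id+\id\otimes\ad_{\g}(g))(r+\tau(r))\big)\bullet(\text{a fixed $B$-tensor})$. This vanishes by $\ad_{\g}$-invariance.

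\emph{Main obstacle.} The main difficulty is Step~1, that is, tracking the signs that skew-symmetry of $\varpi$ introduces and confirming that $T_{3}$ really vanishes. Without that, the third term would involve $\tau(r)$ and would not be part of $\mathbf{C}_{r}$. If $T_{3}$ turns out nonzero, the first thing I would try is to combine it with the $T_{2}$ term using the perm identity $b_{1}\diamond(b_{2}\diamond b_{3})=b_{2}\diamond(b_{1}\diamond b_{3})$.
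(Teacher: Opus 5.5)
Your architecture is the paper's: split $\mathbf{L}_{\widehat{r}}$ into $\g$-tensors $\bullet$ $B$-tensors, then identify the $B$-tensors through $\hat{\varpi}$ and left nondegeneracy. However, the concrete predictions of Step~1 are false, so that step fails as planned. Use $\sum_j\varpi(f_j,b)e_j=b$, $\sum_j\varpi(e_j,b)f_j=-b$, $\varpi(b_1\diamond b_2,b_3)=\varpi(b_2,b_1\diamond b_3)$ and invariance. Define $\Phi_1,\Phi_2$ by $\hat{\varpi}(\Phi_1,b_1\otimes b_2\otimes b_3)=\varpi(b_2\diamond b_3,b_1)$ and $\hat{\varpi}(\Phi_2,b_1\otimes b_2\otimes b_3)=\varpi(b_3\diamond b_2,b_1)$. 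Then
\[
T_{1}=\Phi_{1},\qquad T_{2}=\Phi_{2}-\Phi_{1},\qquad T_{3}=\Phi_{2}.
\]
Each pairing involves a single product, so the left commutativity that helped in Theorem~\ref{thm:Lie+perm=L} has nothing to act on. By nondegeneracy, $\Phi_2=0$ would force $B\diamond B=0$. Hence $T_3\neq0$, and likewise $T_2\neq\pm T_1$, as soon as $B\diamond B\neq0$.

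Your fourth term is also mis-transcribed. In $\widehat{r}_{23}\ast\widehat{r}_{13}$ the $\widehat{r}_{23}$-leg is the left factor, so the term is $\big(\sum_{i,k}x_k\otimes x_i\otimes[y_i,y_k]\big)\bullet\sum_{j,l}e_l\otimes e_j\otimes(f_j\diamond f_l)$. Reversing both products is not a harmless sign change, because $\diamond$ is not antisymmetric. Your $T_4$ pairs to $\varpi(b_1\diamond b_2,b_3)$, i.e.\ $T_4=T_2$, and the computation would not close. The correct $B$-tensor is $-\Phi_1$, with $\g$-coefficient $-[r_{13},r_{23}]$.

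The missing idea is that the cancellation comes from the Lie side and happens \emph{between} the second and third terms, not from $T_3$ vanishing. Your worry that the third term ``would involve $\tau(r)$'' is unfounded: by antisymmetry of the bracket, $\sum_{i,k}x_k\otimes[x_i,y_k]\otimes y_i=-[r_{12},r_{23}]$. So the second and third terms merge into $-[r_{12},r_{23}]\bullet(T_2-T_3)=[r_{12},r_{23}]\bullet\Phi_1$. Equivalently, the $\Phi_2$-parts carry the coefficient $\sum_{i,j}x_i\otimes([y_i,x_j]+[x_j,y_i])\otimes y_j=0$. This gives $\mathbf{L}_{\widehat{r}}=\mathbf{C}_r\bullet\Phi_1$ for every $r$. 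This is how the paper's formula $\mathbf{C}_r\bullet\Phi_1-(\cdots)\bullet\Phi_2$ closes: its $\Phi_2$-coefficient vanishes identically. (Its displayed fourth pairing should read $-\varpi(e_u\diamond e_v,e_s)$, though the final formula is right.) The perm identity you propose as a fallback plays no role.

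Part (i) is fine. In (ii), your reduction $\widehat{r}-\hat{\tau}(\widehat{r})=(r+\tau(r))\bullet\beta$ is correct and tidier than the paper's expansion. But $\sum_j(b\diamond e_j-e_j\diamond b)\otimes f_j$ does not vanish; it is precisely the fixed $B$-tensor. Since $\fr_{\g\otimes B}$ produces $f_j\diamond b$ (not $b\diamond f_j$), the identity you need is $\sum_j e_j\otimes(f_j\diamond b)=\sum_j(b\diamond e_j-e_j\diamond b)\otimes f_j$. It gives
\[
F(g\otimes b)\big((r+\tau(r))\bullet\beta\big)=\big((\ad_{\g}(g)\otimes\id+\id\otimes\ad_{\g}(g))(r+\tau(r))\big)\bullet\sum_{j}e_j\otimes(f_j\diamond b),
\]
which vanishes by $\ad_{\g}$-invariance of $r+\tau(r)$.
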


\begin{proof}
First, for any $p, q\in\Omega$, we have
\begin{align*}
&\;\widehat{r}_{12}\ast\widehat{r}_{13}-\widehat{r}_{12}\ast\widehat{r}_{23}
-\widehat{r}_{23}\ast\widehat{r}_{12}+\widehat{r}_{23}\ast\widehat{r}_{13}\\
=&\;\sum_{i,j}\sum_{p,q}\Big(\big([x_{i}, x_{j}]\otimes y_{i}\otimes y_{j}\big)
\bullet\big((e_{p}\diamond e_{q})\otimes f_{p}\otimes f_{q}\big)
-\big(x_{i}\otimes[y_{i}, x_{j}]\otimes y_{j}\big)
\bullet\big(e_{p}\otimes(f_{p}\diamond e_{q})\otimes f_{q}\big)\\[-4mm]
&\qquad\quad-\big(x_{i}\otimes[x_{j}, y_{i}]\otimes y_{j}\big)
\bullet\big(e_{p}\otimes(e_{q}\diamond f_{p})\otimes f_{q}\big)
+\big(x_{i}\otimes x_{j}\otimes[y_{j}, y_{i}]\big)\bullet
\big(e_{p}\otimes e_{q}\otimes(f_{q}\diamond f_{p})\big)\Big).
\end{align*}
Moreover, for given $e_{s}, e_{u}, e_{v}\in B$, $s, u, v\in\Omega$, we have
\begin{align*}
\hat{\varpi}\Big(\sum_{p,q}(e_{p}\diamond e_{q})\otimes f_{p}\otimes f_{q},\ \
e_{s}\otimes e_{u}\otimes e_{v}\Big)&=\varpi(e_{u}\diamond e_{v},\; e_{s}),\\[-2mm]
\hat{\varpi}\Big(\sum_{p,q}(e_{p}\otimes(f_{p}\diamond e_{q})\otimes f_{q},\ \
e_{s}\otimes e_{u}\otimes e_{v}\Big)
&=\varpi(e_{v}\diamond e_{u}-e_{u}\diamond e_{v},\; e_{s}),\\[-2mm]
\hat{\varpi}\Big(\sum_{p,q}e_{p}\otimes(e_{q}\diamond f_{p})\otimes f_{q},\ \
e_{s}\otimes e_{u}\otimes e_{v}\Big)&=\varpi(e_{v}\diamond e_{u},\; e_{s}),\\[-2mm]
\hat{\varpi}\Big(\sum_{p,q}e_{p}\otimes e_{q}\otimes(f_{q}\diamond f_{p}),\ \
e_{s}\otimes e_{u}\otimes e_{v}\Big)&=\varpi(e_{u}\diamond e_{v},\; e_{s}).
\end{align*}
Thus, if we denote $\Phi_{1}, \Phi_{2}\in B\,\hat{\otimes}\,B\,\hat{\otimes}\,B$ by
$\hat{\varpi}(\Phi_{1},\; e_{s}\otimes e_{u}\otimes e_{v})=\varpi(e_{u}\diamond e_{v},\;
e_{s})$ and $\hat{\varpi}(\Phi_{2},\; e_{s}\otimes e_{u}\otimes e_{v})=\varpi(e_{v}\diamond
e_{u},\; e_{s})$, then we get
\begin{align*}
&\;\widehat{r}_{12}\ast\widehat{r}_{13}-\widehat{r}_{12}\ast\widehat{r}_{23}
-\widehat{r}_{23}\ast\widehat{r}_{12}+\widehat{r}_{23}\ast\widehat{r}_{13}\\
=&\; \mathbf{C}_{r}\bullet\Phi_{1}-\Big(\sum_{i,j}x_{i}\otimes[y_{i}, x_{j}]\otimes y_{j}
+x_{i}\otimes[x_{j}, y_{i}]\otimes y_{j}\Big)\bullet\Phi_{2}.
\end{align*}
Hence, $\widehat{r}$ is a completed solution of the $\LYBE$ in $(A\otimes B, \ast)$
if $r$ is a skew-symmetric solution of the $\CYBE$ in $(\g, [-,-])$.
Second, for any $e_{s}, e_{t}\in B$, $s, t\in\Omega$, we have
$$
\hat{\varpi}\Big(\sum_{j\in\Omega}e_{j}\otimes f_{j},\; e_{s}\otimes e_{t}\Big)
=\varpi(e_{s}, e_{t})=-\varpi(e_{t}, e_{s})
=-\hat{\varpi}\Big(\sum_{j\in\Omega}f_{j}\otimes e_{j},\; e_{s}\otimes e_{t}\Big).
$$
That is $\sum_{j}e_{j}\otimes f_{j}=-\sum_{j}f_{j}\otimes e_{j}$. Thus,
we get $\widehat{r}$ is symmetric if $r$ is skew-symmetric.

Finally, for any $e_{s}, e_{t}\in B$, $s, t\in\Omega$, note that
$$
\hat{\varpi}\Big(\sum_{j\in\Omega}(b\diamond e_{j})\otimes f_{j},\ \ e_{s}\otimes e_{t}\Big)
=\varpi(b\diamond e_{t},\; e_{s})
=-\hat{\varpi}\Big(\sum_{j\in\Omega}(b\diamond f_{j})\otimes e_{j},\ \ e_{s}\otimes e_{t}\Big).
$$
We get $\sum_{j\in\Omega}(b\diamond e_{j})\otimes f_{j}=-\sum_{j\in\Omega}(b\diamond f_{j})
\otimes e_{j}$. Similarly, we also have $\sum_{j\in\Omega}(e_{j}\diamond b)\otimes f_{j}
=-\sum_{j\in\Omega}(f_{j}\diamond b)\otimes e_{j}$ and $\sum_{j\in\Omega}e_{j}\otimes
(f_{j}\diamond b)=-\sum_{j\in\Omega}f_{j}\otimes(e_{j}\diamond b)=\sum_{j\in\Omega}
\big((b\diamond e_{j})\otimes f_{j}-(e_{j}\diamond b)\otimes f_{j}\big)$.
Thus, for any $g\in\g$ and $b\in B$,
\begin{align*}
&\;\Big((\fl_{\g\otimes B}+\fr_{\g\otimes B})(g\otimes b)\,\hat{\otimes}\,\id)
-\id\,\hat{\otimes}\,\fr_{\g\otimes B}(g\otimes b)\Big)(\widehat{r}-\hat{\tau}(\widehat{r}))\\
=&\;\sum_{i}\sum_{j\in\Omega}\Big(
([g, x_{i}]\otimes y_{i})\bullet((b\diamond e_{j})\otimes f_{j})
+([x_{i}, g]\otimes y_{i})\bullet((e_{j}\diamond b)\otimes f_{j})\\[-3mm]
&\qquad\qquad-(x_{i}\otimes[y_{i}, g])\bullet(e_{j}\otimes(f_{j}\diamond b))
-([g, y_{i}]\otimes x_{i})\bullet((b\diamond f_{j})\otimes e_{j})\\[-1mm]
&\qquad\qquad-([y_{i}, g]\otimes x_{i})\bullet((f_{j}\diamond b)\otimes e_{j})
+(y_{i}\otimes[x_{i}, g])\bullet(f_{j}\otimes(e_{j}\diamond b))\Big)\\
=&\;\sum_{i}\sum_{j\in\Omega}\Big(\big([g, x_{i}]\otimes y_{i}-x_{i}\otimes[y_{i}, g]
+[g, y_{i}]\otimes x_{i}-y_{i}\otimes[x_{i}, g]\big)
\bullet((b\diamond e_{j})\otimes f_{j})\\[-4mm]
&\qquad\qquad +\big([x_{i}, g]\otimes y_{i}+x_{i}\otimes[y_{i}, g]+[y_{i}, g]\otimes x_{i}
+y_{i}\otimes[x_{i}, g]\big)\bullet((e_{j}\diamond b)\otimes f_{j})\Big)\\
=&\; 0,
\end{align*}
if $r+\tau(r)$ is $\ad_{\g}$-invariant, i.e., $\sum_{i}\big([g, x_{i}]\otimes y_{i}
+x_{i}\otimes[g, y_{i}]+y_{i}\otimes[g, x_{i}]+[g, y_{i}]\otimes x_{i}\big)=0$.
That is, $\widehat{r}-\hat{\tau}(\widehat{r})$ is Leib-invariant if $r+\tau(r)$
is $\ad_{\g}$-invariant. The proof is finished.
\end{proof}

Triangular completed Leibniz bialgebras (resp. triangular Lie bialgebras) are
closely related to completed solutions (resp. solutions) of the classical Yang-Baxter
equation in a $\bz$-graded Leibniz algebra (resp. in a Lie algebra). By Proposition
\ref{pro:CYBE-LYBE}, we get the following conclusion.

\begin{thm}\label{thm:indu-triLib}
Let $(\g, [-,-], \delta)$ be a Lie bialgebra, $(B=\oplus_{i\in\bz}B_{i}, \diamond,
\varpi)$ be a quadratic $\bz$-graded perm algebra and $(\g\otimes B, \ast, \vartheta)$ be
the induced completed Leibniz bialgebra from $(\g, [-,-], \delta)$ by
$(B, \diamond, \varpi)$. If $\delta=\delta_{r}$ for some $r\in\g\otimes\g$,
then $\vartheta=\vartheta_{\widehat{r}}$, where $\widehat{r}$ is given by Eq. \eqref{r-indL}.

In particular, we get that $(\g\otimes B, \ast, \vartheta)$ is a coboundary (resp.
quasi-triangular, triangular) completed Leibniz bialgebra if
$(\g, [-,-], \delta)$ is coboundary (resp. quasi-triangular, triangular).
\end{thm}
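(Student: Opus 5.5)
The plan is to follow the template of Theorem \ref{thm:indu-sLbia}, with the roles of the Lie and perm factors interchanged. The first step is to prove $\vartheta=\vartheta_{\widehat{r}}$ directly. Then the three special cases follow from Proposition \ref{pro:CYBE-LYBE} together with Propositions \ref{pro:Leib-tri} and \ref{pro:Leib-qtri}.

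Write $r=\sum_i x_i\otimes y_i$, and fix a homogeneous basis $\{e_j\}_{j\in\Omega}$ of $B$ with $\varpi$-dual basis $\{f_j\}$. Then
$$\vartheta(g\otimes b)=\delta_r(g)\bullet\nu_\varpi(b)=\sum_i\big([g,x_i]\otimes y_i+x_i\otimes[g,y_i]\big)\bullet\nu_\varpi(b).$$
On the other side, expanding Eq. \eqref{cL-cobo} gives
$$\vartheta_{\widehat r}(g\otimes b)=\sum_{i,j}\Big(([g,x_i]\otimes y_i)\bullet((b\diamond e_j)\otimes f_j)+([x_i,g]\otimes y_i)\bullet((e_j\diamond b)\otimes f_j)-(x_i\otimes[y_i,g])\bullet(e_j\otimes(f_j\diamond b))\Big).$$
Using $[x_i,g]=-[g,x_i]$ and $-[y_i,g]=[g,y_i]$, this becomes
$$\sum_i([g,x_i]\otimes y_i)\bullet\Big(\sum_j(b\diamond e_j-e_j\diamond b)\otimes f_j\Big)+\sum_i(x_i\otimes[g,y_i])\bullet\Big(\sum_j e_j\otimes(f_j\diamond b)\Big).$$
The key step is therefore to establish the two identities
$$\sum_j(b\diamond e_j-e_j\diamond b)\otimes f_j=\nu_\varpi(b)=\sum_j e_j\otimes(f_j\diamond b)$$
in $B\,\hat{\otimes}\,B$.

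I would verify both identities by pairing with $e_s\otimes e_t$ under $\hat\varpi$ and invoking left nondegeneracy. The middle term gives $\hat\varpi(\nu_\varpi(b),e_s\otimes e_t)=-\varpi(b,e_s\diamond e_t)$.

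For the right-hand expression, the pairing is $\varpi(e_s,f_t\diamond b)$ summed appropriately. Using $\varpi(b_1\diamond b_2,b_3)=\varpi(b_2,b_1\diamond b_3)$, skew-symmetry, and the relation $\sum_j e_j\otimes(f_j\diamond b)=\sum_j((b\diamond e_j)-(e_j\diamond b))\otimes f_j$ already noted in the proof of Proposition \ref{pro:CYBE-LYBE}, this reduces to computing $\hat\varpi(\sum_j(b\diamond e_j)\otimes f_j,e_s\otimes e_t)=\varpi(b\diamond e_t,e_s)$ (up to the sign convention of the dual basis). Similarly, $\hat\varpi(\sum_j(e_j\diamond b)\otimes f_j,e_s\otimes e_t)=\varpi(e_t\diamond b,e_s)$. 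Invariance then gives $\varpi(b\diamond e_t-e_t\diamond b,e_s)=-\varpi(b,e_s\diamond e_t)$, up to sign bookkeeping, which matches $\nu_\varpi(b)$.

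The main obstacle is exactly this sign bookkeeping. It involves the dual-basis convention $\varpi(f_i,e_j)=\delta_{ij}$ under a skew-symmetric form, the minus sign in the definition of $\nu_\varpi$, and the twisted invariance $\varpi(b_1\diamond b_2,b_3)=\varpi(b_1,b_2\diamond b_3-b_3\diamond b_2)$. I would first compute $\sum_j e_j\otimes(f_j\diamond b)$ against $e_s\otimes e_t$ carefully, then use the relations from Proposition \ref{pro:CYBE-LYBE} to transfer the result to the other expression.

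Once $\vartheta=\vartheta_{\widehat r}$ is established, the three cases follow quickly. The coboundary case is immediate. In the quasi-triangular case, $r$ solves the $\CYBE$ and $r+\tau(r)$ is $\ad_\g$-invariant, so Proposition \ref{pro:CYBE-LYBE}(ii) makes $\widehat r-\hat\tau(\widehat r)$ Leib-invariant. However, the computation in that proof leaves the extra term $\big(\sum x_i\otimes[y_i,x_j]\otimes y_j+x_i\otimes[x_j,y_i]\otimes y_j\big)\bullet\Phi_2$. I would show this term vanishes by rewriting it via the $\ad$-invariance of $r+\tau(r)$, writing $[y_i,x_j]+[x_j,y_i]$ in terms of $\mathrm{ad}$ acting on the symmetric part; this step needs checking. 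Given that, $\widehat r$ is a completed solution of the $\LYBE$, and Proposition \ref{pro:Leib-qtri} applies. In the triangular case, $r$ is skew-symmetric, so the extra term vanishes outright by the argument in Proposition \ref{pro:CYBE-LYBE}. Then $\widehat r$ is a symmetric completed solution, and Proposition \ref{pro:Leib-tri} applies.
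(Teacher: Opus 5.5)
Your argument is the paper's. You expand $\vartheta(g\otimes b)$ and $\vartheta_{\widehat{r}}(g\otimes b)$ and reduce to $\nu_{\varpi}(b)=\sum_{j}e_{j}\otimes(f_{j}\diamond b)=\sum_{j}(b\diamond e_{j}-e_{j}\diamond b)\otimes f_{j}$. You check these by pairing with $e_{s}\otimes e_{t}$ and using left nondegeneracy of $\hat{\varpi}$, then read off the three cases from Proposition \ref{pro:CYBE-LYBE} together with Propositions \ref{pro:Leib-tri} and \ref{pro:Leib-qtri}.

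The signs you hedge on close without trouble. Since $\sum_{j}\varpi(e_{j},e_{s})f_{j}=-e_{s}$, pairing $\sum_{j}e_{j}\otimes(f_{j}\diamond b)$ gives $-\varpi(e_{s}\diamond b,e_{t})=-\varpi(b,e_{s}\diamond e_{t})$. Pairing $\sum_{j}(b\diamond e_{j}-e_{j}\diamond b)\otimes f_{j}$ gives $\varpi(b\diamond e_{t}-e_{t}\diamond b,e_{s})=\varpi(e_{s}\diamond e_{t},b)$, by invariance and skew-symmetry. Both equal $\hat{\varpi}(\nu_{\varpi}(b),e_{s}\otimes e_{t})$.

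The only place you deviate is the step you flag in the quasi-triangular case, and there your proposed mechanism is off. The coefficient $\sum_{i,j}x_{i}\otimes\big([y_{i},x_{j}]+[x_{j},y_{i}]\big)\otimes y_{j}$ of $\Phi_{2}$ vanishes identically by antisymmetry of the Lie bracket, for every $r\in\g\otimes\g$. So neither $\ad_{\g}$-invariance of $r+\tau(r)$ nor skew-symmetry of $r$ plays any role at that step, in the quasi-triangular or the triangular case.

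Thus $\mathbf{L}_{\widehat{r}}=\mathbf{C}_{r}\bullet\Phi_{1}$ for arbitrary $r$, and $\widehat{r}$ is a completed solution of the $\LYBE$ whenever $r$ solves the $\CYBE$. If you redo the pairings yourself, note that $\sum_{p,q}e_{p}\otimes e_{q}\otimes(f_{q}\diamond f_{p})$ pairs with $e_{s}\otimes e_{u}\otimes e_{v}$ to $-\varpi(e_{u}\diamond e_{v},e_{s})$. That sign is what makes the $\Phi_{1}$-coefficient exactly $\mathbf{C}_{r}$.

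The $\ad_{\g}$-invariance of $r+\tau(r)$ is needed only for part (ii) of Proposition \ref{pro:CYBE-LYBE}, that is, for Leib-invariance of $\widehat{r}-\hat{\tau}(\widehat{r})$. With this observation your proof is complete.
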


\begin{proof}
Let $r=\sum_{i}x_{i}\otimes y_{i}\in\g\otimes\g$ and $\delta=\delta_{r}$. We now show
that $\vartheta=\vartheta_{\widehat{r}}$. That is, $(\g\otimes B, \ast, \vartheta)
=(\g\otimes B, \ast, \vartheta_{\widehat{r}})$ as completed Leibniz bialgebras.
In fact, for any $g\in\g$ and $b\in B$, we have
$$
\vartheta(g\otimes b)=\sum_{i}\sum_{l,k,\alpha}\Big(
(x_{i}\otimes[g, y_{i}])\bullet(b_{1,l,\alpha}\otimes b_{2,k,\alpha})
+([g, x_{i}]\otimes y_{i})\bullet(b_{1,l,\alpha}\otimes b_{2,k,\alpha})\Big),
$$
where $\delta_{r}(g)=(\id\otimes\ad_{\g}(g)+\ad_{\g}(g)\otimes\id)(r)
=\sum_{i}\big(x_{i}\otimes[g, y_{i}]+[g, x_{i}]\otimes y_{i}\big)$ and
$\nu_{\varpi}(b)=\sum_{l,k,\alpha}b_{1,l,\alpha}\otimes b_{2,k,\alpha}$. On the other hand,
\begin{align*}
\vartheta_{\widehat{r}}(g\otimes b)&=\big((\fl_{\g\otimes B}+\fr_{\g\otimes B})
(g\otimes b)\,\hat{\otimes}\,\id)-\id\,\hat{\otimes}\,\fr_{\g\otimes B}
(g\otimes b)\big)(\widehat{r})\\
&=\sum_{i}\sum_{j\in\Omega}\Big(\big([g, x_{i}]\otimes y_{i}\big)\bullet\big((b\diamond
e_{j})\otimes f_{j}-(e_{j}\diamond b)\otimes f_{j}\big)-\big(x_{i}\otimes[y_{i}, g]\big)
\bullet\big(e_{j}\otimes(f_{j}\diamond b)\big)\Big),
\end{align*}
where $\widehat{r}=\sum_{i}\sum_{j\in\Omega}(x_{i}\otimes e_{j})\otimes(y_{i}\otimes f_{j})$,
$\{e_{j}\}_{j\in\Omega}$ is a homogeneous basis of $B=\oplus_{i\in\bz}B_{i}$
and $\{f_{j}\}_{j\in\Omega}$ is the homogeneous dual basis of $\{e_{j}\}_{j\in\Omega}$
with respect to $\varpi(-,-)$.

For two homogeneous basis elements $e_{s}, e_{t}\in B$, $s, t\in\Omega$, since
$$
\hat{\varpi}\Big(\sum_{l,k,\alpha}b_{1,l,\alpha}\otimes b_{2,k,\alpha},\;
e_{s}\otimes e_{t}\Big)=\varpi(b\diamond e_{t}-e_{t}\diamond b,\; e_{s})
=\hat{\varpi}\Big(\sum_{j\in\Omega}e_{j}\otimes(f_{j}\diamond b),\; e_{s}\otimes e_{t}\Big),
$$
we get $\sum_{l,k,\alpha}b_{1,l,\alpha}\otimes b_{2,k,\alpha}=\sum_{j}e_{j}\otimes
(f_{j}\diamond b)$. Similarly, we also have$\sum_{l,k,\alpha}b_{1,l,\alpha}\otimes
b_{2,k,\alpha}=\sum_{j}\big((b\diamond e_{j})\otimes f_{j}-(e_{j}\diamond b)\otimes
f_{j}\big)$. Thus, $\vartheta_{\widehat{r}}(g\otimes b)=\vartheta(g\otimes b)$.
Therefore, $(\g\otimes B, \ast, \vartheta)$ is coboundary if $(\g, [-,-], \delta)$
is coboundary. Moreover, by Proposition \ref{pro:CYBE-LYBE}, we get that
$(\g\otimes B, \ast, \vartheta)$ is quasi-triangular (resp. triangular) if
$(\g, [-,-], \delta)$ is quasi-triangular (resp. triangular).
\end{proof}

Let $(\g, [-,-], \delta)$ be a Lie bialgebra and $r\in\g\otimes\g$.
It is worth noting that this theorem also gives the following commutative diagram:
$$
\xymatrix@C=3cm@R=0.5cm{
\txt{$r$ \\ {\tiny a solution of the $\CYBE$ in $(\g, [-,-])$}\\
{\tiny such that $r+\tau(r)$ is $\ad_{\g}$-invariant}}
\ar[d]_{{\rm Pro.}~\ref{pro:CYBE-LYBE}}\ar[r]^{{\rm Pro.}~\ref{pro:lie-bia}} &
\txt{$(\g, [-,-], \delta_{r})$ \\ {\tiny a quasi-triangular Lie bialgebra}}
\ar[d]^{{\rm Thm.}~\ref{thm:indu-triLib}}_{{\rm Thm.}~\ref{thm:Lie+perm=L}} \\
\txt{$\widehat{r}$ \\ {\tiny a completed solution of the $\LYBE$}\\ {\tiny
in $(\g\otimes B, \ast)$ such that $\widehat{r}-\hat{\tau}(\widehat{r})$ is Leib-invariant}}
\ar[r]^{{\rm Pro.}~\ref{pro:Leib-qtri}\quad} &
\txt{$(\g\otimes B, \ast, \vartheta)=(\g\otimes B, \ast, \vartheta_{\widehat{r}})$ \\
{\tiny a completed quasi-triangular Leibniz bialgebra}}}
$$

\begin{rmk}\label{rmk:indu-facLib}
$(i)$ The factorizable Lie bialgebra received further research in \cite{LS}.
If we consider the finite-dimensional Leibniz bialgebra induced from Lie bialgebras,
similar to the proof of Theorem \ref{thm:indu-sLbia}, we can also obtain that
$(\g\otimes B, \ast, \vartheta)$ is factorizable if $(\g, [-,-], \delta)$ is factorizable.

$(ii)$ Similar to the method in this section, we can also construct infinite-dimensional
Leibniz bialgebras from finite-dimensional perm bialgebras.
\end{rmk}

At the end of this paper, we give a simple example for infinite-dimensional
Leibniz bialgebras.

\begin{ex}\label{ex:ind-Leibniz}
Let $(\g=\Bbbk\{e_{1}, e_{2}\}, [-,-])$ be a nontrivial Lie algebra, i.e.,
$[e_{1}, e_{2}]=e_{2}$. Then one can check that $r=e_{2}\otimes e_{1}-e_{1}\otimes e_{2}$
is a skew-symmetric solution of the $\CYBE$ in $(\g, [-,-])$. Therefore, we obtain a
Lie bialgebra $(\g, [-,-], \delta_{r})$, where the coproduct $\delta_{r}$ is given by
$\delta_{r}(e_{1})=e_{2}\otimes e_{1}-e_{1}\otimes e_{2}$ and $\delta_{r}(e_{2})=0$.
Consider the quadratic $\bz$-graded perm algebra $(B, \diamond, \varpi)$ given in
Example \ref{ex:qu-perm}. By Theorem \ref{thm:Lie+perm=L}, we get a completed
Leibniz bialgebra $(\g\otimes B, \ast, \vartheta)$, where
\begin{enumerate}\itemsep=0pt
\item[$(i)$] as vector spaces $\g\otimes B=\bigoplus_{i}(\g\otimes B)_{i}$,
$(\g\otimes B)_{i}=\big\{\sum_{j=1}^{2}\sum_{k=1}^{2}f_{k,j}\partial_{k}e_{j}\mid f_{k,j}$
is a homogeneous polynomial with $\deg(f_{k,j})=i-1\big\}$;
\item[$(ii)$] the product $\ast$ is given by
\begin{align*}
(x_{1}^{i_{1}}x_{2}^{i_{2}}\partial_{s}e_{1})\diamond
(x_{1}^{j_{1}}x_{2}^{j_{2}}\partial_{t}e_{2})
&=\delta_{s,1}x_{1}^{i_{1}+j_{1}+1}x_{2}^{i_{2}+j_{2}}\partial_{t}e_{2}
+\delta_{s,2}x_{1}^{i_{1}+j_{1}}x_{2}^{i_{2}+j_{2}+1}\partial_{t}e_{2},\\
(x_{1}^{i_{1}}x_{2}^{i_{2}}\partial_{s}e_{2})\diamond
(x_{1}^{j_{1}}x_{2}^{j_{2}}\partial_{t}e_{1})
&=-\delta_{s,1}x_{1}^{i_{1}+j_{1}+1}x_{2}^{i_{2}+j_{2}}\partial_{t}e_{2}
-\delta_{s,2}x_{1}^{i_{1}+j_{1}}x_{2}^{i_{2}+j_{2}+1}\partial_{t}e_{2},
\end{align*}
and others are all zero;
\item[$(iii)$] the coproduct $\vartheta$ is given by
\begin{align*}
\vartheta(x_{1}^{m}x_{2}^{n}\partial_{1}e_{1})&=\sum_{i_{1}, i_{2}\in\bz}
\Big(x_{1}^{i_{1}}x_{2}^{i_{2}}\partial_{1}e_{2}\otimes x_{1}^{m-i_{1}}
x_{2}^{n-i_{2}+1}\partial_{1}e_{1}-x_{1}^{i_{1}}x_{2}^{i_{2}}\partial_{2}e_{2}
\otimes x_{1}^{m-i_{1}+1}x_{2}^{n-i_{2}}\partial_{1}e_{1}\\[-5mm]
&\qquad\qquad -x_{1}^{i_{1}}x_{2}^{i_{2}}\partial_{1}e_{1}\otimes x_{1}^{m-i_{1}}
x_{2}^{n-i_{2}+1}\partial_{1}e_{2}+x_{1}^{i_{1}}x_{2}^{i_{2}}\partial_{2}e_{1}
\otimes x_{1}^{m-i_{1}+1}x_{2}^{n-i_{2}}\partial_{1}e_{2}\Big), \\[-1mm]
\vartheta(x_{1}^{m}x_{2}^{n}\partial_{2}e_{1})&=\sum_{i_{1}, i_{2}\in\bz}
\Big(x_{1}^{i_{1}}x_{2}^{i_{2}}\partial_{1}e_{2}\otimes x_{1}^{m-i_{1}}
x_{2}^{n-i_{2}+1}\partial_{2}e_{1}-x_{1}^{i_{1}}x_{2}^{i_{2}}\partial_{2}e_{2}
\otimes x_{1}^{m-i_{1}+1}x_{2}^{n-i_{2}}\partial_{2}e_{1}\\[-5mm]
&\qquad\qquad -x_{1}^{i_{1}}x_{2}^{i_{2}}\partial_{1}e_{1}\otimes x_{1}^{m-i_{1}}
x_{2}^{n-i_{2}+1}\partial_{2}e_{2}+x_{1}^{i_{1}}x_{2}^{i_{2}}\partial_{2}e_{1}
\otimes x_{1}^{m-i_{1}+1}x_{2}^{n-i_{2}}\partial_{2}e_{2}\Big),
\end{align*}
and others are all zero.
\end{enumerate}

On the other hand, note that $\{x_{1}^{i_{1}}x_{2}^{i_{2}}\partial_{1},\;
x_{1}^{i_{1}}x_{2}^{i_{2}}\partial_{2}\mid i_{1}, i_{2}\in\bz\}$ is a homogeneous
basis of $B$ and its dual basis with respect to $\varpi(-,-)$ is given by
$\{x_{1}^{-i_{1}}x_{2}^{-i_{2}}\partial_{2},\; -x_{1}^{-i_{1}}x_{2}^{-i_{2}}\partial_{1}
\mid i_{1}, i_{2}\in\bz\}$. Let
\begin{align*}
\widehat{r}:&=\sum_{i_{1}, i_{2}}\Big(
x_{1}^{i_{1}}x_{2}^{i_{2}}\partial_{1}e_{2}\otimes
x_{1}^{-i_{1}}x_{2}^{-i_{2}}\partial_{2}e_{1}
-x_{1}^{i_{1}}x_{2}^{i_{2}}\partial_{2}e_{2}\otimes
x_{1}^{-i_{1}}x_{2}^{-i_{2}}\partial_{1}e_{1}\\[-5mm]
&\qquad\qquad-x_{1}^{i_{1}}x_{2}^{i_{2}}\partial_{1}e_{1}\otimes
x_{1}^{-i_{1}}x_{2}^{-i_{2}}\partial_{2}e_{2}
+x_{1}^{i_{1}}x_{2}^{i_{2}}\partial_{2}e_{1}\otimes
x_{1}^{-i_{1}}x_{2}^{-i_{2}}\partial_{1}e_{2}\Big).
\end{align*}
Then one can check that $\widehat{r}$ is a symmetric solution of the $\LYBE$
in $(\g\otimes B, \ast)$ and the coproduct $\vartheta_{\widehat{r}}$ induced
by $\widehat{r}$ is just the coproduct $\vartheta$ given above.
\end{ex}

\bigskip
\noindent
{\bf Acknowledgements. } This work was financially supported by National
Natural Science Foundation of China (No. 11771122).

\smallskip
\noindent
{\bf Declaration of interests.} The authors have no conflicts of interest to disclose.

\smallskip
\noindent
{\bf Data availability.} Data sharing is not applicable to this article as no new data were
created or analyzed in this study.

 \end{document}